\newtheorem{theorem}{Theorem}
\newtheorem{lemma}{Lemma}
\newtheorem{proposition}{Proposition}
\newtheorem{corollary}{Corollary}
\newtheorem{remark}{Remark}
\begin{document}

\begin{center}
{\Large
\textbf{The Asymptotic Distribution of Randomly Weighted Sums and Self-normalized Sums}}

\vspace{1cm}

\textsc{P\'eter Kevei}
\footnote{Research supported by the TAMOP--4.2.1/B--09/1/KONV--2010--0005 project.} \\
Analysis and Stochastics Research Group of the Hungarian Academy of Sciences \\
Bolyai Institute, Aradi v\'ertan\'uk tere 1, 6720 Szeged, Hungary,  and \\
CIMAT, Callej\'on Jalisco S/N, Mineral de Valenciana, Guanajuato 36240, Mexico. \\
e-mail: \texttt{kevei@math.u-szeged.hu}

\bigskip

\textsc{David M. Mason} \footnote{Research partially supported by NSF
Grant DMS--0503908.} \\

University of Delaware \\
213 Townsend Hall, Newark, DE 19716, USA \\
e-mail: \texttt{davidm@udel.edu}
\end{center}

\bigskip

\begin{abstract}
We consider the self-normalized sums
$T_{n}=\sum_{i=1}^{n}X_{i}Y_{i}/\sum_{i=1}^{n}Y_{i}$,
where $\{ Y_{i} : i\geq 1 \}$ are non-negative i.i.d.~random variables,
and $\{ X_{i} : i\geq 1 \}  $ are i.i.d. random variables, independent of
$\{ Y_{i} : i \geq 1 \}$. The main result of the paper is that each subsequential
limit law of $T_n$ is continuous for any non-degenerate $X_1$ with finite expectation, if and only
if $Y_1$ is in the centered Feller class.

\textit{Keywords:} Self-normalized sums; Feller class; stable distributions.

\textit{AMS Subject Classificiation:} MSC 60F05; MSC 60E07.

\end{abstract}

\section{Introduction}

Let $\left\{  Y,\mbox{ }Y_{i}:i\geq1\right\}  $ denote a sequence of i.i.d.
random variables, where $Y$ is non-negative and non-degenerate with cumulative
distribution function [cdf] $G$. \ Now let $\left\{  X,\mbox{ }X_{i}%
:i\geq1\right\}  $ be a sequence of i.i.d. random variables, independent of
$\left\{  Y,\mbox{ }Y_{i}:i\geq1\right\}  $, where $X$ is in the class
$\mathcal{X}$ of non-degenerate random variables $X$ satisfying $E|X|<\infty.$
Consider the randomly weighted sums and self-normalized sums
\[
W_{n}=\sum_{i=1}^{n}X_{i}Y_{i}\text{ and }T_{n}=\sum_{i=1}^{n}X_{i}Y_{i}%
/\sum_{i=1}^{n}Y_{i}.
\]
We define $0/0:=0$. \smallskip

In statistics $T_{n}$ has uses as a version of the weighted bootstrap, where
typically more assumptions are imposed on $X$ and $Y$. See Mason and Newton
\cite{MN} for details. We shall see that $T_{n}$ is an interesting random
variable, which is worthy of study in its own right. \smallskip

Notice that $E|X|<\infty$ implies that $T_{n}$ is stochastically bounded and
thus every subsequence of $\left\{  n\right\}  $ contains a further
subsequence $\left\{  n^{\prime}\right\}  $ such that for some random variable
$T$, $T_{n^{\prime}}$ $\overset{\mathrm{D}}{\longrightarrow}T\text{. }$
Theorem 4 of Breiman \cite{Brei} says that $T_{n}$ converges in distribution
along the full sequence $\left\{  n\right\}  $ for \textit{every}
$X\in\mathcal{X}$ with at least one limit law being non-degenerate if and only
if%
\begin{equation}
Y\in D\left(  \beta\right)  ,\text{ with }0\leq\beta<1. \label{DB}%
\end{equation}
In this paper, $Y\in D\left(  \beta\right)  $ means that for some function $L$
slowly varying at infinity and $\beta\geq0$,
\[
P\left\{  Y>y\right\}  =y^{-\beta}L(y),\mbox{ }y>0.
\]
In the case $0<\beta<1$ this is equivalent to $Y\geq0$ being in the domain of
attraction of a positive stable law of index $\beta$. Breiman \cite{Brei} has
shown in his Theorem 3 that in this case $T$ has a distribution related to the
arcsine law. We give a natural extension of his result in Theorem
\ref{Breiman-th3} below. \smallskip

At the end of his paper Breiman conjectured that $T_{n}$ converges in
distribution to a non-degenerate law for\textit{\ some} $X\in\mathcal{X}$ if
and only if $Y\in D\left(  \beta\right)  ,$ with $0\leq\beta<1.$ Mason and
Zinn \cite{MZ} partially verified his conjecture. They established the
following: \medskip

Whenever $X$ is non-degenerate and satisfies $E|X|^{p}<\infty$\ for some
$p>2,$\ then $T_{n}$\ converges in distribution to a non-degenerate random
variable if and only if (\ref{DB}) holds.\smallskip

We shall not solve Breiman's full conjecture in this paper. Our interest is to
investigate the asymptotic distributional behavior of the weighted sums
$W_{n}$ and $T_{n}$ along subsequences $\left\{  n^{\prime}\right\}  $ of
$\left\{  n\right\}  $. An important role in our study is played by those $Y$
that are in the \textit{centered Feller class}. A random variable $Y$ (not
necessarily non-negative) is said to be in the \textit{Feller class} if there
exist sequences of norming and centering constants $\{  a_n \}_{n\geq1}$
and $\left\{  b_{n}\right\}  _{n\geq1}$ such that if
$Y_{1},Y_{2},\dots$ are i.i.d. $Y$ then for every subsequence of $\left\{
n\right\}  $ there exists a further subsequence $\left\{  n^{\prime}\right\}
$ such that
\[
\frac{1}{a_{n^{\prime}}}\left\{  \sum_{i=1}^{n^{\prime}}Y_{i}-b_{n^{\prime}%
}\right\}  \overset{\mathrm{D}}{\longrightarrow}W,\text{ as }n^{\prime
}\rightarrow\infty,
\]
where $W$ is a non-degenerate random variable. We shall denote this by
$Y\in\mathcal{F}$. Furthermore, $Y$ is in the \textit{centered Feller class},
if $Y$ is in the \textit{Feller class} and one can choose $b_{n}=0$,
for all $n\geq1$. This we shall denote as $Y\in\mathcal{F}_{c}$.
In this paper the norming sequence $\{a_n \}$ is always
assumed to be strictly positive and to tend to infinity.

\smallskip

Our most unexpected finding is the following theorem, which connects
$Y\in\mathcal{F}_{c}$ with  the continuity of all of the subsequential
limit laws
of $T_{n}$. It is an immediate consequence of the results that we shall establish.

\begin{theorem}
\label{continuous} All subsequential distributional limits of
\[
T_{n}=\frac{\sum_{i=1}^{n}X_{i}Y_{i}}{\sum_{i=1}^{n}Y_{i}}%
\]
are continuous for any $X$ in the class $\mathcal{X}$, if and only if
$Y\in\mathcal{F}_{c}$.$\smallskip$
\end{theorem}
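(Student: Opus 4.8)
The plan is to convert the weak-convergence statement into one about the Lévy concentration function $Q(\cdot\,;\lambda)=\sup_x P(\cdot\in[x,x+\lambda])$, using that a law is continuous (atomless) exactly when $Q(\cdot\,;0+):=\lim_{\lambda\downarrow0}Q(\cdot\,;\lambda)=0$, and that $T_{n'}\overset{\mathrm{D}}{\longrightarrow}T$ gives $Q(T;0+)\le\lim_{\lambda\downarrow0}\liminf_{n'}Q(T_{n'};\lambda)$. Writing $S_n=\sum_{i=1}^nY_i$ and $w_i=Y_i/S_n$, the weight vector $(w_1,\dots,w_n)$ is a random probability vector independent of the $X_i$, and $T_n=\sum_{i=1}^nX_iw_i$ is, conditionally on $\{Y_i\}$, a sum of independent summands. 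The entire dichotomy is governed by how many weights stay bounded away from $0$: set $N_n(\delta)=\#\{i\le n:w_i\ge\delta\}$. Continuity of the limit should be equivalent to $N_n(\delta)\to\infty$ (in the appropriate subsequential sense) as $n\to\infty$ and $\delta\downarrow0$, and membership in $\mathcal{F}_c$ is precisely what guarantees this.

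For sufficiency ($Y\in\mathcal{F}_c\Rightarrow$ continuity for \emph{every} $X\in\mathcal{X}$) I would bound the concentration function conditionally on $\{Y_i\}$. Since $Q(X_iw_i;\lambda)=Q(X;\lambda/w_i)$ and $X$ is non-degenerate, there are $\eta_0,c_0>0$ with $Q(X;\eta)\le1-c_0$ for $\eta\le\eta_0$, so each index with $w_i\ge\delta$ contributes at least $c_0$ to $\sum_i(1-Q(X_iw_i;\lambda))$ once $\lambda\le\eta_0\delta$. The Kolmogorov--Rogozin inequality then yields
\[
Q(T_n;\lambda\mid Y_1,\dots,Y_n)\le \frac{C}{\sqrt{\,c_0\,N_n(\delta)\,}},\qquad \lambda\le\eta_0\delta,
\]
and averaging gives $Q(T_n;\lambda)\le C'\,E[(N_n(\delta))^{-1/2}\wedge1]$. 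It therefore suffices to show $N_n(\delta)\to\infty$. Here the centered Feller class enters: along any subsequence one extracts a further one $n'$ with $S_{n'}/a_{n'}\overset{\mathrm{D}}{\longrightarrow}W$ non-degenerate and the point process $\sum_i\delta_{Y_i/a_{n'}}$ converging to a Poisson process with Lévy measure $\Lambda$. Because $Y\ge0$, $b_{n'}=0$ and $a_{n'}\to\infty$, the limit $W$ is a non-degenerate nonnegative infinitely divisible law with no Gaussian component, and $\Lambda$ \emph{must} have infinite total mass; a finite $\Lambda$ would make $\overline G$ slowly varying on the scale $a_{n'}$, forcing $M_n/S_n\to1$ and precluding a non-degenerate limit with $a_{n'}\to\infty$. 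Infinitely many Poisson points make $N_{n'}(\delta)=\#\{i:Y_i/a_{n'}\ge\delta\,S_{n'}/a_{n'}\}$ large with probability tending to $1$ as $\delta\downarrow0$, whence $Q(T;0+)=0$. Conceptually this is Lévy's criterion: the limit is an infinite convolution $\sum_kX_kJ_k/W$ of non-degenerate laws, hence atomless.

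For necessity I would argue the contrapositive. If $Y\notin\mathcal{F}_c$ then, along a subsequence, $S_n$ is either relatively stable --- equivalently $M_n/S_n\to0$, equivalently $x\,\overline G(x)/\int_0^x\overline G(u)\,du\to0$ --- in which case $\max_iw_i\to0$ and the weighted weak law of large numbers gives $T_n\to EX$, a degenerate and hence non-continuous limit for every $X$; or it is dominated by its maximum, $M_n/S_n\to1$ (the slowly varying $\beta=0$ regime), in which case the residual weights carry vanishing total mass and $T_{n'}$ converges in distribution to a single copy of $X$. Choosing any $X\in\mathcal{X}$ with an atom (for instance a two-point law) then produces a non-continuous subsequential limit, which is the required witness.

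The hard part is the structural lemma tying the abstract membership $Y\in\mathcal{F}_c$ to the concrete ``many comparable weights'' property $N_n(\delta)\to\infty$: one must show that $\mathcal{F}_c$ is exactly the regime strictly between relative stability ($M_n/S_n\to0$) and maximal domination ($M_n/S_n\to1$), and convert the infinitude of the limiting Lévy measure into a uniform, subsequence-robust lower bound on the participation number. A secondary technical point is controlling the accumulated small-weight ``dust'' in the ratio so that it neither creates atoms nor weakens the concentration estimate; the Kolmogorov--Rogozin bound is used precisely to avoid an explicit identification of the small-jump part of the limit.
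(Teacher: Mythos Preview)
Your sufficiency strategy via the Kolmogorov--Rogozin inequality is a legitimate and genuinely different route from the paper's. The paper does not estimate concentration functions at all; instead it invokes Griffin's matrix-normalisation theory to show that each subsequential limit $(W_1,W_2)$ has a $C^\infty$ Lebesgue density, whence $T=W_1/W_2$ has one. Your approach would give only atomlessness, not a density, but that is all the theorem asks. One point in your argument is not correctly justified, however: you claim that a finite $\Lambda$ ``would make $\overline G$ slowly varying on the scale $a_{n'}$, forcing $M_n/S_n\to1$ and precluding a non-degenerate limit.'' That is not right---a finite $\Lambda$ with $\alpha=0$ yields a compound Poisson limit, which is perfectly non-degenerate. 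The correct reason $\Lambda$ must be infinite is that (by the paper's Proposition~\ref{alpha0}) $\alpha=0$, and the limiting L\'evy process inherits the centered Feller class property at zero; a compound Poisson process with no drift is identically $0$ for small times, so cannot be in $\mathcal F_c$ at zero. This is not hard, but it is not the argument you gave.

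The necessity argument has a genuine gap: your dichotomy ``along a subsequence either $M_n/S_n\to0$ or $M_n/S_n\to1$'' is false. When $Y\notin\mathcal F$ one only obtains, via Mason's Proposition~1 in \cite{Mason05} (which also needs the extra tightness condition~\eqref{grif}), that
\[
\lim_{\varepsilon\to0}\liminf_{n'\to\infty}P\{M_{n'}/S_{n'}>1-\varepsilon\}=\delta
\]
for some $\delta\in(0,1]$, and $\delta<1$ is entirely possible; there need be no subsequence along which $M_n/S_n\to1$ in probability, and the subsequential limit of $T_n$ is then not $X$ but some law that merely shares an atom with $X$. Symmetrically, when \eqref{grif} fails one gets only $\lim_{\eta\to0}\liminf_k P\{R_{n_k}\le\eta\}>0$, not relative stability; the limit of $T_{n_k}$ is not the constant $EX$ but some law with an atom at $EX$. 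The paper's dichotomy is therefore not ``$M_n/S_n\to0$ versus $M_n/S_n\to1$'' but rather ``\eqref{grif} fails versus \eqref{grif} holds (forcing \eqref{not-feller-class})'', and in each branch one proves the weaker conclusion that the limit has an atom (Theorems~\ref{converse} and~\ref{exp}). Your plan could be repaired by weakening both branches to ``positive mass of the limit of $M_n/S_n$ at $0$'' and ``positive mass at $1$'', but establishing that \emph{this} dichotomy is exhaustive when $Y\notin\mathcal F_c$ is precisely where Maller's characterisation and Griffin's condition enter, and you have not supplied that link.
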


Our result agrees with both Theorem 4 of \cite{Brei} as cited above and
Theorem 3 of \cite{Brei}, which implies that if $Y\in D\left(  \beta\right)
$, with $0<\beta<1,$ then $T_{n}$ $\overset{\mathrm{D}}{\longrightarrow}T$,
where $T$ has a continuous distribution with a Lebesgue density. Note that all
such $Y$ are in the centered Feller class. It turns out that whenever
$Y\in\mathcal{F}_{c}$ and $X \in \mathcal{X}$ every subsequential limit law of $T_{n}$ has a Lebesgue
density. Refer to Theorem \ref{density} below. \smallskip

Breiman \cite{Brei} also studied the randomly weighted sums $W_{n}$. From his
Proposition 3 it can be readily inferred that if $Y\geq0$ and $Y\in D\left(
\beta\right)  $, with $0<\beta<1$, and $X$ is independent of $Y$ satisfying
$E\left\vert X\right\vert <\infty$ then
\begin{align*}
&\lim_{y\rightarrow\infty}\frac{P\{XY>y\}}{1-G(y)}
=\int_{0}^{\infty}x^{\beta}F(\mathrm{d}x) \ \text{ and } \\
&\lim_{y\rightarrow\infty}\frac{P\{XY<-y\}}{1-G(y)}
=\int_{-\infty}^{0}\left(  -x\right)  ^{\beta}F(\mathrm{d}x).
\end{align*}
This implies that for any sequence of norming constants $a_{n}>0$ such that
\begin{equation}
\frac{1}{a_{n}}\sum_{i=1}^{n}Y_{i}\overset{\mathrm{D}}{\longrightarrow
}W\left(  \beta\right)  ,\text{as }n\rightarrow\infty\text{,} \label{st}%
\end{equation}
where $W\left(  \beta\right)  $ is a non-degenerate stable law of index
$\beta$, then for the randomly weighted sums we have
\begin{equation}
\frac{1}{a_{n}}\sum_{i=1}^{n}X_{i}Y_{i}\overset{\mathrm{D}}{\longrightarrow
}W^{\prime}\left(  \beta\right)  ,\text{as }n\rightarrow\infty\text{,}
\label{wst}%
\end{equation}
where $W^{\prime}\left(  \beta\right)  $ is also a non-degenerate stable law
of index $\beta$.\smallskip\ 

Along the way towards establishing the results needed to prove Theorem
\ref{continuous} we shall need to generalize this result. Our Theorem
\ref{2dim-conv} implies that if along a subsequence $\left\{  n^{\prime
}\right\}  $ the normed sum $a_{n^{\prime}}^{-1}\sum_{i=1}^{n^{\prime}}Y_{i}$
converges in distribution then so does $a_{n^{\prime}}^{-1}\sum_{i=1}%
^{n^{\prime}}X_{i}Y_{i}$. It also identifies their limit laws.

Here is a brief outline of our paper. Some necessary notation is introduced in
subsection 1.1, and our main results are stated in subsection 1.2, where we
fill out the picture of the asymptotic distribution of the self-normalized
sums $T_{n}$ along subsequences under a nearly exhaustive set of regularity
conditions. The proofs are detailed in section 2 and some additional
information is provided in an appendix. We shall soon see that the innocuous
looking sequence of stochastic variables $\left\{  T_{n}\right\}  $ displays
quite a variety of subsequential distributional limit behavior.

\subsection{Some necessary notation}

Before we can state our results we must first fix some notation. Let
$\mathrm{id}(a,b,\nu)$ denote an infinitely divisible distribution on
$\mathbb{R}^{d}$ with characteristic exponent
\[
\mathrm{i}u^{\prime}b-\frac{1}{2}u^{\prime}au+\int\left(  e^{\mathrm{i}%
u^{\prime}x}-1-\mathrm{i}u^{\prime}xI \{|x|\leq1 \} \right)  \nu(\mathrm{d}x),
\]
where $b\in\mathbb{R}^{d}$, $a\in\mathbb{R}^{d\times d}$ is a positive
semidefinite matrix and $\nu$ is a L\'{e}vy measure on $\mathbb{R}^{d}$ and
$u^{\prime}$ stands for the transpose of $u$. In our case $d$ is $1$ or $2$.
For any $h>0$ put
\[
a^{h}=a+\int_{|x|\leq h}xx^{\prime}\nu(\mathrm{d}x)\text{ and }b^{h}%
=b-\int_{h<|x|\leq1}x\nu(\mathrm{d}x).
\]
For $d=1$, $\mathrm{id}(\alpha,\Lambda )$, with L\'{e}vy measure $\Lambda$
on $( 0,\infty) $, such that
\begin{equation}
\int_{0}^{1}s\Lambda\left(  \mathrm{d}s\right)  <\infty \label{ff}%
\end{equation}
holds, and $\alpha \geq 0$, denotes a
non-negative infinitely divisible distribution with characteristic exponent
\begin{equation*}
\mathrm{i} u \alpha  + \int_{0}^{\infty } \left( e^{\mathrm{i} u x } - 1 \right)
\Lambda ( \mathrm{d} x ).
\end{equation*}%
Moreover, an infinitely divisible random variable is non-negative, if and only
if the representation above holds.
We will use both representations, so note that
$\mathrm{id}(\alpha, \Lambda) = \mathrm{id}(0,b,\Lambda)$,  if and only if
$\alpha = b - \int_0^1 x \Lambda (\mathrm{d} x)$.

Let $W_{2}$ be an infinitely divisible random variable taking values in
$\left[  0,\infty\right)  $ with characteristic exponent
\begin{equation}
\log Ee^{\mathrm{i}uW_{2}}=\mathrm{i}ub+\int\left(  e^{\mathrm{i}%
ux}-1-\mathrm{i}ux I \{ |x|\leq1 \} \right)  \Lambda(\mathrm{d}x)
=\mathrm{i}u \alpha + \int\left(  e^{\mathrm{i}ux}-1  \right)  \Lambda(\mathrm{d}x),
\label{WW}%
\end{equation}
$b\in\mathbb{R}$ and $\Lambda$ be the L\'{e}vy measure of $W_{2}$ concentrated
on $\left(  0,\infty\right)$ satisfying (\ref{ff}).

Set for $v>0$,
\begin{equation}
\overline{\Lambda}\left(  v\right)  =\Lambda\left(  \left(  v,\infty\right)
\right)  . \label{gam2}%
\end{equation}
We write for $0<v_{1}\leq v_{2}<\infty$%
\[
\int_{v_{1}}^{v_{2}}\Lambda\left(  \mathrm{d}s\right)  =:\int_{\left(
v_{1},v_{2}\right]  }\Lambda\left(  \mathrm{d}s\right)  =\overline{\Lambda
}\left(  v_{1}\right)  -\overline{\Lambda}\left(  v_{2}\right)  =\Lambda
\left(  \left(  v_{1},v_{2}\right]  \right)  .
\]
Note that $\lim_{v_{2}\searrow v_{1}}\Lambda\left(  \left(  v_{1}%
,v_{2}\right]  \right)  =0$ and thus $\overline{\Lambda}\left(  v\right)  $ is
right continuous on $\left(  0,\infty\right)  $; and
\[
\lim_{v_{1}\nearrow v_{2}}\Lambda\left(  \left(  v_{1},v_{2}\right]  \right)
=\Lambda\left(  \left\{  v_{2}\right\}  \right)  .
\]
Let $F$ be the cdf of a random variable $X$ satisfying $0<E|X|<\infty$. We
denote $\overline{F}=1-F.$ For $u\geq0$ and $v>0$ set%
\begin{equation}
\overline{\Pi}\left(  u,v\right)  =\int_{v}^{\infty}\overline{F}\left(
u/s\right)  \Lambda\left(  \mathrm{d}s\right)  =:\int_{\left(  v,\infty
\right)  }\overline{F}\left(  u/s\right)  \Lambda\left(  \mathrm{d}s\right)
\label{v3}%
\end{equation}
and
\begin{equation}
\Pi\left(  -u,v\right)  =\int_{\left(  v,\infty\right)  }F\left(  -u/s\right)
\Lambda\left(  \mathrm{d}s\right)  . \label{v4}%
\end{equation}
In order to define a bivariate L\'{e}vy measure we need to verify that the
functions above are meaningful when $u>0$ and $v=0$. First we shall check
that
\[
\overline{\Pi}\left(  u,0\right)  =\int_{0}^{\infty}\overline{F}%
(u/s)\Lambda(\mathrm{d}s)<\infty,
\]
which is equivalent to the finiteness of $\int_{0}^{1}\overline{F}%
(u/s)\Lambda(\mathrm{d}s)$. Since $E|X|<\infty$, we have $x[F(-x)+\overline
{F}(x)]\rightarrow0$ as $x\rightarrow\infty$, and so by (\ref{ff})
\[
\int_{0}^{1}\overline{F}(u/s)\Lambda(\mathrm{d}s)=\int_{0}^{1}s\,s^{-1}%
\overline{F}(u/s)\Lambda(\mathrm{d}s)\leq u^{-1}\sup_{x\geq0}x\overline
{F}(x)\,\int_{0}^{1}s\Lambda(\mathrm{d}s)<\infty.
\]
The finiteness of (\ref{v4}) with $u>0$ and $v=0$ can be shown in the same way.

Using the functions $\overline{\Pi}(  u,v )$ and $\Pi( -u,v )$
we define the L\'{e}vy measure $\Pi$ on $(-\infty,\infty) \times (0, \infty)$
by
\begin{equation}
\Pi\left(  \left(  a,b\right]  \times\left(  c,d\right]  \right)  =\int
_{c}^{d}\left(  F\left(  b/s\right)  -F\left(  a/s\right)  \right)
\Lambda\left(  \mathrm{d}s\right)  \label{LM}%
\end{equation}
for $-\infty<a<b<\infty$ and $0<c<d<\infty$.

\subsection{Our results}

In this subsection we state our results on the asymptotic distributional
behavior of $W_{n}$ and $T_{n}$ along subsequences $\left\{  n^{\prime
}\right\}  $. Our first theorem is a generalization of the convergence in
distribution fact stated in (\ref{st}) and (\ref{wst}) above. In the
following, $\left\{  (X,Y),\left(  X_{i},Y_{i}\right)  ,i\geq1\right\}  $, are
i.i.d., where $X$ and $Y$ are independent, $X$ has cdf $F$ and $Y$ has cdf $G
$, with $0<P\left\{  Y>0\right\}  \leq P\left\{  Y\geq0\right\}  =1$.

\begin{theorem}
\label{2dim-conv} Assume that $E|X|<\infty$. If along a subsequence $\left\{
n^{\prime}\right\}  $ for a sequence of norming constant $a_{n^{\prime}}>0$
\begin{equation}
\frac{1}{a_{n^{\prime}}}\sum_{i=1}^{n^{\prime}}Y_{i}\overset{\mathrm{D}%
}{\longrightarrow}W_{2},\text{ as }n^{\prime}\rightarrow\infty, \label{Y-conv}%
\end{equation}
where $W_{2}$ has $\mathrm{id}(\alpha, \Lambda) = \mathrm{id}(0,b,\Lambda)$
distribution as in (\ref{WW}) and necessarily
\begin{equation}
\alpha=b-\int_{0}^{1}x\Lambda(\mathrm{d}x)\geq0\text{,} \label{alp1}%
\end{equation}
then along the same subsequence
\begin{equation}
\left(  \frac{\sum_{i=1}^{n^{\prime}}X_{i}Y_{i}}{a_{n^{\prime}}},\frac
{\sum_{i=1}^{n^{\prime}}Y_{i}}{a_{n^{\prime}}}\right)  \overset{\mathrm{D}%
}{\longrightarrow}(W_{1},W_{2}),\text{ as }n^{\prime}\rightarrow\infty,
\label{XY-conv}%
\end{equation}
where $(W_{1},W_{2})$ has $\mathrm{id}(\mathbf{0},\mathbf{b},\Pi)$
distribution, with
\begin{equation}
\mathbf{b}=\left(
\begin{array}
[c]{c}%
b_{1}\\
b_{2}%
\end{array}
\right)  =\left(
\begin{array}
[c]{c}%
\alpha EX+\int_{0<u^{2}+v^{2} \leq1} u \Pi\left(  \mathrm{d}u,\mathrm{d}v\right) \\
\alpha +\int_{0<u^{2}+v^{2}\leq 1}v\Pi\left(  \mathrm{d}u,\mathrm{d}v\right)
\end{array}
\right)  , \label{b}%
\end{equation}
i.e.~it has characteristic function
\begin{equation} \label{limit-chfunc}
\begin{split}
&  Ee^{\mathrm{i}(\theta_{1}W_{1}+\theta_{2}W_{2})}=\exp\bigg\{\mathrm{i}%
(\theta_{1}b_{1}+\theta_{2}b_{2}) \\
+  \int_{0}^{\infty} \! \int_{-\infty}^{\infty} & \left(   e^{\mathrm{i}(\theta
_{1}x+\theta_{2}y)}-1-\left(  \mathrm{i}\theta_{1}x+\mathrm{i}\theta
_{2}y\right) I\left\{ x^{2}+y^{2} \leq 1 \right\} \right)  F\Big(
\frac{\mathrm{d}x}{y} \Big)  \Lambda\left(  \mathrm{d}y\right)  \bigg\}.
\end{split}
\end{equation}
\end{theorem}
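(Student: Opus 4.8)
The plan is to regard $\{Z_{n',i}=a_{n'}^{-1}(X_iY_i,\,Y_i):1\le i\le n'\}$ as a row-wise i.i.d.\ triangular array of $\mathbb{R}^2$-valued summands and to apply the classical criterion for convergence of such arrays to a law $\mathrm{id}(\mathbf 0,\mathbf b,\Pi)$ with no Gaussian component. Writing $Z_{n'}=a_{n'}^{-1}(XY,Y)$ for a generic summand and $\mu_{n'}=P\{a_{n'}^{-1}Y\in\cdot\,\}$, convergence of $\sum_i Z_{n',i}$ to $(W_1,W_2)$ reduces to three conditions: (i) vague convergence $n'P\{Z_{n'}\in\cdot\,\}\to\Pi$ on $\mathbb{R}^2\setminus\{0\}$; (ii) $\lim_{\tau\downarrow0}\limsup_{n'} n'\,\mathrm{Cov}\big(Z_{n'}I\{|Z_{n'}|\le\tau\}\big)=0$, forcing the Gaussian part to vanish; and (iii) the centering $n'E\big[Z_{n'}I\{|Z_{n'}|\le1\}\big]\to\mathbf b$. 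The hypothesis \eqref{Y-conv} already supplies the $Y$-marginal information, so the task is to lift it to the plane using the independence of $X$ and $Y$ together with $E|X|<\infty$.

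For (i) I would first note that \eqref{Y-conv} and the one-dimensional theory give $n'\mu_{n'}\to\Lambda$ vaguely on $(0,\infty)$. For a rectangle $R=(a,b]\times(c,d]$ with $0<c<d$, independence yields $n'P\{Z_{n'}\in R\}=n'\int_{(c,d]}\big(F(b/s)-F(a/s)\big)\,\mu_{n'}(\mathrm ds)$, and since $s\mapsto F(b/s)-F(a/s)$ is bounded and $\Lambda$-a.e.\ continuous on the compact $[c,d]\subset(0,\infty)$, vague convergence produces the limit $\int_{(c,d]}\big(F(b/s)-F(a/s)\big)\Lambda(\mathrm ds)=\Pi(R)$, matching \eqref{LM}. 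The delicate point is that $\Pi$ is concentrated on $\mathbb{R}\times(0,\infty)$ while vague convergence demands control on all sets bounded away from the origin, including those approaching the axis $\{v=0\}$. Here $E|X|<\infty$ is essential: for each $\varepsilon>0$ I would show $\lim_{\delta\downarrow0}\limsup_{n'} n'P\{|a_{n'}^{-1}XY|>\varepsilon,\ a_{n'}^{-1}Y\le\delta\}=0$. Conditioning on $Y$ this equals $n'\int_{(0,\delta]}P\{|X|>\varepsilon/s\}\,\mu_{n'}(\mathrm ds)$; writing $P\{|X|>\varepsilon/s\}=(s/\varepsilon)\,g(s)$ with $g(s)=(\varepsilon/s)P\{|X|>\varepsilon/s\}\to0$ as $s\downarrow0$ (because $tP\{|X|>t\}\to0$ follows from $E|X|<\infty$), and using that $n'\int_{(0,\delta]}s\,\mu_{n'}(\mathrm ds)$ stays bounded with $\limsup$ equal to $\alpha+\int_{(0,\delta]}s\,\Lambda(\mathrm ds)$ (again from \eqref{Y-conv}), the bound $\tfrac1\varepsilon\sup_{s\le\delta}g(s)\cdot O(1)\to0$ follows, confining all mass to $v>0$.

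For (ii), on $\{|Z_{n'}|\le\tau\}$ one has $a_{n'}^{-1}Y\le\tau$, and conditioning on $Y=a_{n'}s$ gives $E\big[(Xs)^2I\{|Xs|\le\tau\}\big]=s^2E\big[X^2I\{|X|\le\tau/s\}\big]\le\tau s\,E|X|$, using $E[X^2I\{|X|\le M\}]\le M\,E|X|$; hence the truncated second moment of the first coordinate is at most $\tau E|X|\cdot n'\int_{(0,\tau]}s\,\mu_{n'}(\mathrm ds)\to\tau E|X|\big(\alpha+\int_{(0,\tau]}s\,\Lambda(\mathrm ds)\big)$, which vanishes as $\tau\downarrow0$. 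The second-coordinate term vanishes because $W_2$ has no Gaussian part in \eqref{WW}, and the cross term is controlled by Cauchy--Schwarz. For (iii) I would split the conditional computation of $n'E\big[a_{n'}^{-1}XY\,I\{|Z_{n'}|\le1\}\big]$ at a small level of $s=a_{n'}^{-1}Y$: for $s$ bounded away from $0$ the measure convergence from (i) contributes $\int_{0<u^2+v^2\le1}u\,\Pi(\mathrm du,\mathrm dv)$, while for small $s$ the inner expectation $sE\big[XI\{|X|\le\sqrt{1-s^2}/s\}\big]\approx s\,EX$ combines with $n'E\big[a_{n'}^{-1}Y\,I\{a_{n'}^{-1}Y\ \text{small}\}\big]\to\alpha$ to yield the drift contribution $\alpha\,EX$, giving $b_1$; an analogous, simpler bookkeeping gives $b_2$.

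The main obstacle is exactly the centering step (iii): isolating the term $\alpha\,EX$ requires carefully matching the two-dimensional ball truncation $\{u^2+v^2\le1\}$ against the one-dimensional truncation inherited from \eqref{Y-conv}, and converting the drift $\alpha$ of the $Y$-subordinator into $\alpha\,EX$ via the expansion $E[XI\{|X|\le M\}]\to EX$, i.e.\ via $E|X|<\infty$. Assembling (i)--(iii) in the array convergence theorem then yields \eqref{XY-conv} with the characteristic function \eqref{limit-chfunc}, the Lévy measure $\Pi$ appearing in the disintegrated form $F(\mathrm dx/y)\Lambda(\mathrm dy)$. A parallel route would expand the characteristic function directly, writing the left side as $\big(E\exp\{\mathrm i a_{n'}^{-1}(\theta_1 XY+\theta_2 Y)\}\big)^{n'}$, conditioning on $Y$, and using $\varphi_X(t)=1+\mathrm it\,EX+o(t)$; there too the drift term $\alpha\,EX$ emerges from the linear part of $\varphi_X$ acting on the small-$Y$ mass, so the same integrability input $E|X|<\infty$ is the crux.
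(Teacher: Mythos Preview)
Your proposal is correct and follows essentially the same route as the paper: both apply the classical i.i.d.\ array criterion (the paper cites Kallenberg's Corollary~15.16, your conditions (i)--(iii)), lift the one-dimensional information from \eqref{Y-conv} to the plane by conditioning on $Y$, and use $E|X|<\infty$ via $tP\{|X|>t\}\to0$ at exactly the two places you identify---confining the L\'evy mass to $\{v>0\}$ and extracting the drift $\alpha\,EX$ from the small-$s$ regime in the truncated first moment. The paper packages your three steps as Propositions on L\'evy-measure convergence, truncated first moments, and truncated second moments respectively; the only cosmetic difference is that the paper verifies (e.iii) at a fixed $h$ with $\Pi(C_h)=0$ and then lets $h\downarrow0$, whereas you state the equivalent $\lim_{\tau\downarrow0}\limsup$ form directly.
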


\begin{remark}
In general, Theorem \ref{2dim-conv} is no longer valid if $E\left\vert
X\right\vert =\infty$. For example, let $X$ and $Y$ be non-negative,
non-degenerate random variables such that $X\in D(\beta_{1})$ and $Y\in
D(\beta_{2})$, with $0<\beta_{1}<\beta_{2}<1$. We have $EX=\infty.$\ From
Lemma \ref{Breiman-prop3} below we can conclude that $XY$ is in the domain of
attraction of positive stable law of index $\beta_{1}.$ In this example for
sequences of norming constants $a_{n,i}=L_{i}\left(  n\right)  n^{1/\beta_{i}%
},$ $i=1,2$, where $L_{i}\left(  x\right)  ,$ $i=1,2$, are slowly varying
functions at infinity,
\[
a_{n,1}^{-1}\sum_{i=1}^{n}X_{i}Y_{i}\overset{\mathrm{D}}{\longrightarrow}%
W_{1} \ \text{ and } \ a_{n,2}^{-1}\sum_{i=1}^{n}Y_{i}\overset{\mathrm{D}%
}{\longrightarrow}W_{2}\text{, as }n\rightarrow\infty\text{,}%
\]
where $W_{i}$ are non-degenerate stable random variables of index $\beta_{i},
$ $i=1,2$. Since $a_{n,1}/a_{n,2}\rightarrow\infty$, (\ref{XY-conv}) cannot
hold. It is clear in this example that the self-normalized sum $T_{n}$
$\overset{\mathrm{P}}{\longrightarrow}\infty$, which says that $T_{n}$ is not
stochastically bounded.
\end{remark}

\begin{remark}
Note that
\begin{equation}
(W_{1},W_{2})\overset{\mathrm{D}}{=}(a_{1}+U,a_{2}+V), \label{w1w2}%
\end{equation}
where $(a_{1},a_{2})=\left( \alpha EX, \alpha \right)  $ and
\begin{equation}
Ee^{\mathrm{i}(\theta_{1}U+\theta_{2}V)}
= \exp\left\{  \int_{0}^{\infty}%
\int_{-\infty}^{\infty}\left(  e^{\mathrm{i}(\theta_{1}x+\theta_{2}%
y)}-1\right)  F\left(  \mathrm{d}x/y\right)  \Lambda\left(  \mathrm{d}%
y\right)  \right\} =: \exp\left\{  \phi\left(  \theta_{1},\theta_{2}\right)
\right\}  . \label{uv}%
\end{equation}
Furthermore under the assumptions of Theorem \ref{2dim-conv}, we have that the
convergence takes place in the Skorohod space $D(\mathbb{R}_{+},\mathbb{R}%
^{2})$, i.e.
\[
\left\{  \left(  \frac{\sum_{1\leq i\leq n^{\prime}t}X_{i}Y_{i}}{a_{n^{\prime
}}},\frac{\sum_{1\leq i\leq n^{\prime}t}Y_{i}}{a_{n^{\prime}}}\right)
,t>0\right\}  \overset{\mathrm{D}}{\longrightarrow}\left\{  (a_{1}%
t+U_{t},a_{2}t+V_{t}),t>0\right\}  ,
\]
as $n^{\prime}\rightarrow\infty$,
where $(U_{t},V_{t})$, $t\geq0$, is the bivariate L\'{e}vy process with
characteristic function%
\begin{equation}
Ee^{\mathrm{i}(\theta_{1}U_{t}+\theta_{2}V_{t})}=:\exp\left\{  t\phi\left(
\theta_{1},\theta_{2}\right)  \right\}  . \label{LPt}%
\end{equation}
This immediately follows from Theorem \ref{2dim-conv} combined with Skorohod's
theorem (Theorem 16.14 in \cite{Kallenberg}).
\end{remark}

In a separate paper we shall characterize when under regularity conditions
the ratio $ U_{t} / V_{t}$
converges in distribution to a non-degenerate random variable $T$ as
$t\rightarrow\infty$ or $t\searrow0$.
\smallskip

\begin{remark}
A result closely related to Theorem \ref{2dim-conv} is the fact that the
Feller class $\mathcal{F}$ is closed under independent multiplication. It is
established in Proposition \ref{Feller-class} \ in the Appendix that if $X$
and $Y$ are independent random variables in the Feller class, then so is $XY$.
\end{remark}

\begin{remark}
Suppose $E\left\vert X\right\vert <\infty$ and assume that along a subsequence
$\left\{  n^{\prime}\right\}  $ of $\left\{  n\right\}  $ for some sequence
$c_{n^{\prime}}\rightarrow\infty$,
\begin{equation}
\frac{1}{c_{n^{\prime}}}\sum_{i=1}^{n^{\prime}}Y_{i}\overset{\mathrm{P}%
}{\longrightarrow}1,\text{ as }n^{\prime}\rightarrow\infty. \label{prob}%
\end{equation}
By applying Theorem \ref{2dim-conv} we see then that
\begin{equation}
\frac{1}{c_{n^{\prime}}}\sum_{i=1}^{n^{\prime}}X_{i}Y_{i}\overset{\mathrm{P}%
}{\longrightarrow}EX,\text{ as }n^{\prime}\rightarrow\infty, \label{BB}%
\end{equation}
which in combination with (\ref{prob}) implies that
\begin{equation}
T_{n'}\overset
{\mathrm{P}}{\longrightarrow}EX,\text{ as }n^{\prime}\rightarrow\infty.
\label{ExX}%
\end{equation}
Notice that (\ref{prob}) holds for the entire sequence $\left\{  n\right\}  $
with $c_{n}=nEY$ when $EY<\infty.$ It is also satisfied whenever along a
subsequence $\left\{  n^{\prime}\right\}  $ for some sequence $b_{n^{\prime}%
}\rightarrow\infty$,
\begin{equation}
\frac{1}{a_{n^{\prime}}}\left\{  \sum_{i=1}^{n^{\prime}}Y_{i}-b_{n^{\prime}%
}\right\}  \overset{\mathrm{D}}{\longrightarrow}W,\text{ as }n^{\prime
}\rightarrow\infty, \label{W}%
\end{equation}
where $W$ is non-degenerate and $b_{n^{\prime}}/a_{n^{\prime}}\rightarrow
\infty,$ as $n^{\prime}\rightarrow\infty.$ A random variable $Y$ that is in
the Feller class but not in the centered Feller class has this property. In
this case (\ref{prob})\ holds with $c_{n^{\prime}}=b_{n^{\prime}}.$
\end{remark}

The following theorem, describes what happens when $Y$ is in the centered
Feller class.

\begin{theorem}
\label{density} Assume $X \in \mathcal{X}$ and $Y\in
\mathcal{F}_{c}$, then for a suitable sequence of norming constants $a_{n}>0$
any subsequence of $\left\{ n\right\}$  contains a further subsequence  $\left\{n^{\prime}\right\}$
such that
\begin{equation}
\left(  \frac{W_{1,n^{\prime}}}{a_{n'}},\frac{W_{2,n'}}{a_{n'}}\right)  :=\left(
\frac{\sum_{i=1}^{n'}X_{i}Y_{i}}{a_{n'}},\frac{\sum_{i=1}^{n'}Y_{i}}{a_{n'}%
}\right)  \text{,} \label{WA}%
\end{equation}
converges in distribution to a non-degenerate random vector, say $\left(  W_{1}%
,W_{2}\right)$, having a $C^{\infty}$ Lebesgue density $f$ on $\mathbb{R}^{2}$,
which implies that the asymptotic distribution of the corresponding ratio
along the subsequence $\left\{  n^{\prime}\right\}  $ satisfies
\begin{equation}
T_{n'}=\frac{W_{1,n'}}{W_{2,n'}}
\overset{\mathrm{D}}{\longrightarrow}\frac{W_{1}}{W_{2}}=:T \label{TT}%
\end{equation}
and has a Lebesgue density $f_{T}$ on $\mathbb{R}$.
\end{theorem}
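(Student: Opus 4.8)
The plan is to deduce Theorem \ref{density} from Theorem \ref{2dim-conv} by first establishing two-dimensional convergence along a suitable subsequence and then proving that the limit vector has a smooth density. Since $Y \in \mathcal{F}_c$, by definition there is a norming sequence $a_n > 0$ with $a_n \to \infty$ such that any subsequence of $\{n\}$ contains a further subsequence $\{n'\}$ along which $a_{n'}^{-1}\sum_{i=1}^{n'} Y_i \overset{\mathrm{D}}{\longrightarrow} W_2$, where $W_2$ is non-degenerate and, being a subsequential limit of normed non-negative i.i.d.\ sums with zero centering, is a non-negative infinitely divisible law of the form $\mathrm{id}(\alpha,\Lambda)$ with $\Lambda$ concentrated on $(0,\infty)$ and satisfying (\ref{ff}). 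Theorem \ref{2dim-conv} then applies verbatim and yields the joint convergence (\ref{XY-conv}) to a vector $(W_1,W_2)$ with characteristic function (\ref{limit-chfunc}). The convergence (\ref{TT}) of the ratio $T_{n'} = W_{1,n'}/W_{2,n'}$ follows from the continuous mapping theorem, provided $P\{W_2 = 0\} = 0$; since $W_2$ is non-degenerate non-negative infinitely divisible, this holds unless $W_2$ is a.s.\ a positive constant, and non-degeneracy of the limit together with $a_n \to \infty$ rules that out.

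The substantive content is to show that $(W_1,W_2)$ has a $C^\infty$ Lebesgue density on $\mathbb{R}^2$. First I would use the representation (\ref{w1w2})--(\ref{uv}) to reduce to showing that $(U,V)$ has a smooth density, since $(W_1,W_2)$ is just a deterministic shift of $(U,V)$. The idea is to exploit the infinite divisibility of $(U,V)$ via its L\'evy measure $\Pi$ defined in (\ref{LM}), whose explicit form $\Pi((a,b]\times(c,d]) = \int_c^d (F(b/s)-F(a/s))\,\Lambda(\mathrm{d}s)$ shows that $\Pi$ spreads mass in \emph{all} directions in the plane: for each atom of "radial" size $s$ in the support of $\Lambda$, the measure $\Pi$ places a one-dimensional slice distributed like $sX$ along the line $\{v = s\}$. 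Because $X$ is non-degenerate, the projection of $\Pi$ onto the first coordinate cannot be supported on a lower-dimensional set, and it is this genuine two-dimensionality that should force absolute continuity.

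The standard tool here is a criterion guaranteeing that an infinitely divisible law on $\mathbb{R}^2$ has a $C^\infty$ density: it suffices that the L\'evy measure $\Pi$ have, near the origin, enough mass spread in all directions so that the real part of the characteristic exponent decays faster than any logarithm — concretely, that $\mathrm{Re}\,\phi(\theta_1,\theta_2) \to -\infty$ at a rate beating $c\log(|\theta_1|+|\theta_2|)$ uniformly as $|\theta|\to\infty$. From (\ref{uv}), $\mathrm{Re}\,\phi(\theta_1,\theta_2) = \int_0^\infty\!\int_{-\infty}^\infty (\cos(\theta_1 x + \theta_2 y) - 1)\,F(\mathrm{d}x/y)\,\Lambda(\mathrm{d}y)$, and I would estimate this integral from below in modulus using the non-degeneracy of $X$ (so that $X$ is not concentrated at a point, giving a nontrivial spread in the $\theta_1$ direction) together with the fact that $\overline{\Lambda}(v)\to\infty$ as $v\searrow 0$ — which is exactly the feature distinguishing $\mathcal{F}_c$ from the finite-variance case and which supplies unboundedly much small-jump mass. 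If the characteristic exponent decays polynomially in every direction, then $Ee^{\mathrm{i}(\theta_1 W_1 + \theta_2 W_2)}$ is integrable together with all its polynomial multiples, and Fourier inversion produces a density in $C^\infty$; differentiating under the integral sign is justified by the same decay. Finally, the marginal smoothness transfers to $T = W_1/W_2$: writing $f_T(t) = \int_0^\infty |w_2| f(t w_2, w_2)\,\mathrm{d}w_2$ and using the rapid decay and smoothness of $f$ gives a Lebesgue density $f_T$ on $\mathbb{R}$.

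The main obstacle will be the lower bound on $-\mathrm{Re}\,\phi$ in \emph{all} directions simultaneously. The direction $\theta_2$ (the $V$-direction) is handled by the classical one-dimensional Feller-class theory for $\sum Y_i$, but for slanted directions one must ensure the oscillation $\cos(\theta_1 x + \theta_2 y) - 1$ does not average away; this requires carefully combining the non-degeneracy of $F$ (to get cancellation-free mass in $x$) with the divergence of $\overline{\Lambda}(0+)$ (to amplify it), and checking the estimate is uniform over the angle. Controlling this uniformly, rather than along fixed rays, is where the real work lies.
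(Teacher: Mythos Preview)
Your overall architecture matches the paper's: use $Y\in\mathcal F_c$ to get subsequential convergence of $a_{n'}^{-1}\sum Y_i$, apply Theorem~\ref{2dim-conv} to upgrade to joint convergence of $(W_{1,n'}/a_{n'},W_{2,n'}/a_{n'})$, and then argue that the limit $(W_1,W_2)$ has a $C^\infty$ density so that the ratio inherits a density. Where you diverge is in the density step. The paper does \emph{not} estimate $\mathrm{Re}\,\phi(\theta_1,\theta_2)$ directly. Instead it observes that $(XY,Y)$ is \emph{full} (its law is not supported on a line, since $X$ and $Y$ are independent and non-degenerate), checks from (\ref{limit-chfunc}) that every subsequential limit $(W_1,W_2)$ is full, and then invokes Griffin's matrix-normalization theory: fullness of the limits is Griffin's condition~(C), equivalent by his Theorem~4.5 to condition~(A), and his Theorem~4.1 then supplies matrix norming under which all subsequential limits have $C^\infty$ densities. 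A convergence-of-types argument transfers this smoothness to the diagonal-normalized limits $(W_1,W_2)$. In short, the paper outsources your ``main obstacle'' to \cite{Griffin}, where the uniform-in-direction decay you describe is exactly what is established.

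Your direct Fourier route is in principle viable and is morally what underlies Griffin's result, but as written it is a plan, not a proof: the uniform lower bound on $-\mathrm{Re}\,\phi$ over all directions is the entire content, and you have not indicated how to obtain it beyond saying that non-degeneracy of $F$ and divergence of $\overline\Lambda(0+)$ should combine. One small correction: your justification that $P\{W_2=0\}=0$ (``non-degenerate non-negative infinitely divisible, so no atom at $0$ unless constant'') is wrong---a compound Poisson on $(0,\infty)$ is non-degenerate yet has an atom at $0$. What actually rules this out is that, by Proposition~\ref{alpha0}, $\alpha=0$, and the centered-Feller-class property of the associated L\'evy process at small times forces $\overline\Lambda(0+)=\infty$; alternatively, once the $C^\infty$ density of $(W_1,W_2)$ is in hand, $P\{W_2=0\}=0$ is immediate. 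Either way, derive the density first and read off the atomlessness afterwards rather than the other way around.
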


Corollary \ref{converse1} below is a kind of a converse of this fact.\smallskip

It is known (and easy calculation shows) that if $Y \in D(\beta)$, $\beta \in (0,1)$,
then the non-negative constant $\alpha$ appearing in the representation of the
stable limit law $\mathrm{id}(\alpha, \Lambda)$ is necessarily 0.
(Breiman tacitly uses this fact in the course of his proof of Theorem 3 \cite{Brei}.)
It turns out  that this is true in a far more general setup.

\begin{proposition} \label{alpha0}
Whenever $Y\in\mathcal{F}_{c}$  and non-negative and $a_{n}>0$ is as in (\ref{WA}), every subsequential limit law $V$ 
of $a_{n}^{-1}\sum_{i=1}^{n}Y_{i}$ is of the form $\mathrm{id}(0,\Lambda)$,
i.e.~ $V$ has characteristic function
\begin{equation*}
Ee^{\mathrm{i}uV}=\exp\left\{  \int_{0}^{\infty}\left(  e^{\mathrm{i}%
uy}-1\right)  \Lambda\left(  \mathrm{d}y\right)  \right\}  ,\label{vvv}%
\end{equation*}
with $\Lambda$ being a L\'evy measure concentrated on
$\left(  0,\infty\right)$ satisfying (\ref{ff}).
\end{proposition}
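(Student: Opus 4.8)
The plan is to reduce everything to showing that the non-negative drift $\alpha$ in the representation $\mathrm{id}(\alpha,\Lambda)$ vanishes. Since $Y\ge0$, any subsequential limit $V$ of $a_{n}^{-1}\sum_{i=1}^{n}Y_{i}$ is a non-negative infinitely divisible law, so by the equivalence recorded in the notation section it must have the form $\mathrm{id}(\alpha,\Lambda)=\mathrm{id}(0,b,\Lambda)$ with $\alpha=b-\int_{0}^{1}x\Lambda(\mathrm{d}x)\ge0$ and $\Lambda$ concentrated on $(0,\infty)$ satisfying (\ref{ff}); thus the whole assertion is equivalent to $\alpha=0$. First I would pin down $\alpha$ through the classical convergence criteria for triangular arrays applied to the row array $\{Y_{i}/a_{n'}:1\le i\le n'\}$ along the convergent subsequence $\{n'\}$. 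These give that $\Lambda$ is the vague limit of $n'P\{Y/a_{n'}\in\cdot\}$ on $(0,\infty)$ and that the truncated drift is $b=\lim_{n'}\tfrac{n'}{a_{n'}}E[Y\,I\{Y\le a_{n'}\}]$. Subtracting the contribution of $\int_{\tau}^{1}s\,\Lambda(\mathrm{d}s)$, which by vague convergence equals $\lim_{n'}\tfrac{n'}{a_{n'}}E[Y\,I\{\tau a_{n'}<Y\le a_{n'}\}]$, leaves the clean formula
\[
\alpha=\lim_{\tau\downarrow0}\lim_{n'\to\infty}\frac{n'}{a_{n'}}E\big[Y\,I\{Y\le\tau a_{n'}\}\big].
\]

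The heart of the argument, and what I expect to be the main obstacle, is to extract from the hypothesis $Y\in\mathcal{F}_{c}$ the right control on the truncated mean $m(x):=E[Y\,I\{Y\le x\}]$, namely $\limsup_{x\to\infty}m(x)/(x\overline{G}(x))<\infty$. This bound is exactly what separates the centered Feller class from the Feller class: if it failed, so that $m(x_{k})/(x_{k}\overline{G}(x_{k}))\to\infty$ along some $x_{k}\to\infty$, then choosing indices $m_{k}$ with $a_{m_{k}}$ of the same order as $x_{k}$ (possible because $a_{n}\uparrow\infty$, after regularizing the norming if necessary) would force the natural centering $\tfrac{m_{k}}{a_{m_{k}}}m(a_{m_{k}})$ to tend to infinity while the spread of $a_{m_{k}}^{-1}\sum_{i=1}^{m_{k}}Y_{i}$ stays bounded, by the Feller property. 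Consequently no further subsequence of $\{m_{k}\}$ could produce a proper non-degenerate limit with zero centering; one would instead be forced into the situation of (\ref{W}) with $b_{m_{k}}/a_{m_{k}}\to\infty$, contradicting $Y\in\mathcal{F}_{c}$. The delicate technical point is to make ``the spread stays bounded'' precise, via the stochastic-boundedness criterion that tightness of $a_{m}^{-1}\sum_{i=1}^{m}Y_{i}$ without recentering is equivalent to boundedness of $\tfrac{m}{a_{m}}m(a_{m})$.

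Granting $m(x)\le C\,x\overline{G}(x)$ for all large $x$, the conclusion is immediate. For any continuity point $\tau$ of $\overline{\Lambda}$,
\[
\frac{n'}{a_{n'}}E\big[Y\,I\{Y\le\tau a_{n'}\}\big]=\frac{n'}{a_{n'}}m(\tau a_{n'})\le C\,\tau\,n'\overline{G}(\tau a_{n'})\longrightarrow C\,\tau\,\overline{\Lambda}(\tau),
\]
using $n'\overline{G}(\tau a_{n'})\to\overline{\Lambda}(\tau)$. Feeding this into the drift formula gives $\alpha\le C\lim_{\tau\downarrow0}\tau\overline{\Lambda}(\tau)$, and a short computation shows $\tau\overline{\Lambda}(\tau)\to0$ as $\tau\downarrow0$, since (\ref{ff}) forces $\int_{(0,\delta]}s\,\Lambda(\mathrm{d}s)\to0$; hence $\alpha\le0$, so $\alpha=0$. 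This collapses $\mathrm{id}(\alpha,\Lambda)$ to $\mathrm{id}(0,\Lambda)$, whose characteristic function is $\exp\{\int_{0}^{\infty}(e^{\mathrm{i}uy}-1)\Lambda(\mathrm{d}y)\}$, as claimed. As a sanity check, for $Y\in D(\beta)$ with $0<\beta<1$ Karamata's theorem yields $m(x)\sim\frac{\beta}{1-\beta}x\overline{G}(x)$ and $\tau\overline{\Lambda}(\tau)\asymp\tau^{1-\beta}\to0$, recovering the classical fact the authors cite.
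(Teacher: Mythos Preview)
Your overall strategy---reduce to showing the drift $\alpha$ in $\mathrm{id}(\alpha,\Lambda)$ vanishes, identify $\alpha=\lim_{\tau\downarrow 0}\lim_{n'}\tfrac{n'}{a_{n'}}E[Y\,I\{Y\le\tau a_{n'}\}]$ via the triangular--array criteria, then bound this by something that tends to $0$---is correct and genuinely different from the paper's route. The paper instead passes to the limit L\'evy process $\alpha t+V_t$, invokes Corollary~1 of Maller--Mason \cite{MM2} to transfer the centered Feller property to that process, and then uses the small--time characterization in \cite{MM3} (their (2.11)) to obtain $x\big(\alpha+\int_0^x y\,\Lambda(\mathrm{d}y)\big)\le C\int_0^x y^2\,\Lambda(\mathrm{d}y)$ for small $x$, which forces $\alpha=0$. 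Your approach is more elementary and self--contained in that it works directly with the distribution of $Y$ rather than citing the L\'evy--process machinery.

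The weak point is your justification of the key inequality $m(x)\le C\,x\overline{G}(x)$. Your sketch claims that if this failed along $x_k$, then for $m_k$ with $a_{m_k}\asymp x_k$ the centering $\tfrac{m_k}{a_{m_k}}m(a_{m_k})$ would tend to infinity; but writing this as $[m_k\overline{G}(a_{m_k})]\cdot[m(a_{m_k})/(a_{m_k}\overline{G}(a_{m_k}))]$ shows you also need $m_k\overline{G}(a_{m_k})$ bounded away from $0$, equivalently $\liminf_x x^2\overline{G}(x)/V(x)>0$. That is true for non-negative $Y\in\mathcal{F}_c$ (a subsequence with $x_k^2\overline{G}(x_k)/V(x_k)\to0$ would force a Gaussian or degenerate limit, impossible for non-negative non-degenerate limits), but it is an extra step you have not supplied.

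There is a shorter fix that also makes the parallel with the paper transparent: use Maller's characterization directly, i.e.\ $x\,m(x)\le K\,V(x)$ for large $x$, instead of $m(x)\le C\,x\overline{G}(x)$. Then
\[
\frac{n'}{a_{n'}}m(\tau a_{n'})\ \le\ \frac{K}{\tau}\cdot\frac{n'}{a_{n'}^{2}}V(\tau a_{n'})\ \longrightarrow\ \frac{K}{\tau}\int_0^{\tau}y^{2}\,\Lambda(\mathrm{d}y)\ \le\ K\int_0^{\tau}y\,\Lambda(\mathrm{d}y)\ \to\ 0,
\]
using (e.iii) with vanishing Gaussian part (the limit is non-negative). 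Taking $\tau\downarrow0$ gives $\alpha=0$. This is exactly the random--variable analogue of the paper's L\'evy--process inequality and avoids the auxiliary bound altogether.
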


In order to state our next theorem we shall need the following notation. Let
\[
Y_{n,n}=\max\{Y_{1},\ldots,Y_{n}\}=Y_{m(n)},
\]
where to be specific, $m(n)$ is the smallest $1\leq m\leq n$ such that
$Y_{n,n}=Y_{m(n)}$. For any $0<\varepsilon<1$ put
\[
A_{n}\left(  \varepsilon\right)  =\left\{
Y_{m(n)}/\sum_{i=1}^{n}Y_{i} >1-\varepsilon \right\}.
\]
Set
\[
\Delta_{n}=\left\vert T_{n}-X_{m(n)}\right\vert .
\]

\begin{theorem}
\label{converse} Assume that $E |X|<\infty$ and there exists a subsequence
$\{n^{\prime}\}$ such that
\begin{equation}
\lim_{\varepsilon\rightarrow0}\liminf_{n^{\prime}\rightarrow\infty} P \left\{
A_{n^{\prime}}\left(  \varepsilon\right)  \right\}  =:\delta>0, \label{dd}%
\end{equation}
then
\begin{equation}
\lim_{\varepsilon\rightarrow0}\liminf_{n^{\prime}\rightarrow\infty} P \left\{
\Delta_{n^{\prime}}\leq\varepsilon\right\}  \geq\delta>0. \label{BigD}%
\end{equation}
\end{theorem}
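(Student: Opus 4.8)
The plan is to reduce $\Delta_n$ to a quantity that is automatically small on $A_n(\varepsilon)$, exploiting the independence of the $X$'s and the $Y$'s. First I would record the algebraic identity
\[
T_n - X_{m(n)} = \frac{\sum_{i=1}^n (X_i - X_{m(n)})Y_i}{\sum_{i=1}^n Y_i} = \frac{\sum_{i \neq m(n)} (X_i - X_{m(n)})Y_i}{\sum_{i=1}^n Y_i},
\]
which is valid on $\{\sum_{i=1}^n Y_i>0\}$ since the $i=m(n)$ term vanishes. For $0<\varepsilon<1$ the event $A_n(\varepsilon)$ forces $\sum_{i=1}^n Y_i>0$ (the ratio $Y_{m(n)}/\sum_i Y_i$ cannot exceed $1-\varepsilon>0$ under the convention $0/0:=0$), so the representation is available throughout $A_n(\varepsilon)$, and the triangle inequality gives
\[
\Delta_n \, I\{A_n(\varepsilon)\} \leq \frac{\sum_{i \neq m(n)} |X_i - X_{m(n)}|\, Y_i}{\sum_{i=1}^n Y_i}\, I\{A_n(\varepsilon)\}.
\]

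The crux of the argument, and the place where the hypothesis $E|X|<\infty$ is essential, is to condition on $\mathcal{G}_n := \sigma(Y_1,\ldots,Y_n)$. The index $m(n)$, the indicator $I\{A_n(\varepsilon)\}$, every $Y_i$ and $\sum_i Y_i$ are all $\mathcal{G}_n$-measurable, whereas the $X_i$ remain i.i.d.\ copies of $X$ independent of $\mathcal{G}_n$. Hence for each fixed $i\neq m(n)$ one has $E[|X_i - X_{m(n)}|\mid\mathcal{G}_n]=E|X-X'|=:c$, where $X,X'$ are independent copies of $X$, and $c\leq 2E|X|<\infty$ precisely because $E|X|<\infty$ (this is exactly the finiteness that fails in the $E|X|=\infty$ counterexample of the earlier remark). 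Pulling out the $\mathcal{G}_n$-measurable weights yields
\[
E\big[\Delta_n \, I\{A_n(\varepsilon)\}\mid\mathcal{G}_n\big]\leq c\, I\{A_n(\varepsilon)\}\,\frac{\sum_{i\neq m(n)}Y_i}{\sum_{i=1}^n Y_i}=c\, I\{A_n(\varepsilon)\}\left(1-\frac{Y_{m(n)}}{\sum_{i=1}^n Y_i}\right)\leq c\varepsilon\, I\{A_n(\varepsilon)\},
\]
the last step because on $A_n(\varepsilon)$ the maximal term exceeds a $(1-\varepsilon)$-fraction of the sum. Taking expectations gives the key estimate $E[\Delta_n I\{A_n(\varepsilon)\}]\leq c\varepsilon\, P\{A_n(\varepsilon)\}\leq c\varepsilon$.

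From here the conclusion is a bookkeeping exercise with Markov's inequality and the two limits. For any $\varepsilon,\varepsilon'>0$,
\[
P\{\Delta_n\leq\varepsilon\}\geq P\{A_n(\varepsilon')\}-P\{\Delta_n>\varepsilon,\,A_n(\varepsilon')\}\geq P\{A_n(\varepsilon')\}-\frac{E[\Delta_n I\{A_n(\varepsilon')\}]}{\varepsilon}\geq P\{A_n(\varepsilon')\}-\frac{c\varepsilon'}{\varepsilon}.
\]
Choosing $\varepsilon'=\varepsilon^2$ gives $P\{\Delta_n\leq\varepsilon\}\geq P\{A_n(\varepsilon^2)\}-c\varepsilon$. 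Passing to $\liminf_{n'\to\infty}$ along the subsequence and then letting $\varepsilon\to 0$, and using that $\liminf_{n'}P\{A_{n'}(\cdot)\}$ is monotone in its argument so that $\lim_{\varepsilon\to 0}\liminf_{n'}P\{A_{n'}(\varepsilon^2)\}=\delta$ by hypothesis (\ref{dd}), produces (\ref{BigD}).

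I do not expect a genuine obstacle here: once the decomposition and the conditioning reduction are in hand, everything is routine. The only points needing care are that $A_n(\varepsilon)\subseteq\{\sum_i Y_i>0\}$ (so the ratio is well defined), the $\mathcal{G}_n$-measurability of $m(n)$, and the monotonicity in the argument of $A_{n'}(\cdot)$ that justifies interchanging the $\varepsilon\to 0$ limit with the substitution $\varepsilon'=\varepsilon^2$.
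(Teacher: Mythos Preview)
Your proof is correct and is in fact a bit cleaner than the paper's. The paper decomposes
\[
\Delta_n \leq |X_{m(n)}|\left(1-\frac{Y_{m(n)}}{\sum_{i=1}^n Y_i}\right) + \left|\frac{\sum_{i\neq m(n)} X_i Y_i}{\sum_{i=1}^n Y_i}\right|
\]
and bounds the two pieces separately: the second term is controlled by a conditional Markov inequality (conditioning on $A_n(\varepsilon)$ and using $E|X|<\infty$), while the first term still contains the unbounded factor $|X_{m(n)}|$ and requires an additional truncation step $P\{|X_{m(n)}|\leq K_\eta\}\geq 1-\eta$. Your device of writing $T_n-X_{m(n)}=\sum_{i\neq m(n)}(X_i-X_{m(n)})Y_i/\sum_i Y_i$ and conditioning on $\mathcal{G}_n=\sigma(Y_1,\dots,Y_n)$ absorbs the $X_{m(n)}$ contribution into the same sum and dispatches everything with a single first-moment bound $E[\Delta_n I\{A_n(\varepsilon)\}]\leq c\varepsilon$, after which Markov plus the $\varepsilon'=\varepsilon^2$ choice finishes. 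The trade-off: the paper's argument only uses $E|X|$, while yours uses $E|X-X'|$; but since $E|X-X'|\leq 2E|X|$ this costs nothing. One tiny remark: you do not actually need the monotonicity of $\varepsilon\mapsto\liminf_{n'}P\{A_{n'}(\varepsilon)\}$ at the end; the substitution $\eta=\varepsilon^2$ already gives $\lim_{\varepsilon\to0}\liminf_{n'}P\{A_{n'}(\varepsilon^2)\}=\delta$ directly from the hypothesis.
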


In Proposition 1 in \cite{Mason05} Mason proves that whenever $Y$ is not in
the Feller class, that is,
\begin{equation}
\limsup_{x\rightarrow\infty}\frac{x^{2}P\{Y>x\}}{EY^{2}I(Y\leq x)}=\infty,
\label{not-feller-class}%
\end{equation}
and, in addition,
\begin{equation}
\limsup_{x\rightarrow\infty}\frac{xE\left(  YI(Y\leq x)\right)  }%
{x^{2}P\left\{  Y>x\right\}  +EY^{2}I(Y\leq x)}<\infty\label{grif}%
\end{equation}
then there is a subsequence $\{n^{\prime}\}$, such that (\ref{dd}) holds.

Condition (\ref{grif}) is equivalent to
\begin{equation}
\frac{\sum_{i=1}^{n}Y_{i}}{\sqrt{\sum_{i=1}^{n}Y_{i}^{2}}}=O_{P}\left(
1\right)  . \label{op}%
\end{equation}
Consult Griffin \cite{Grif} for more details. \smallskip

Theorem \ref{converse} leads to the following corollary.

\begin{corollary}
\label{converse1} Assume $E|X|<\infty$, (\ref{dd}), and $P\{X=x_{0}\}>0$ for
some $x_{0}$. Then there exists a subsequence $\{n^{\prime}\}$ such that
\begin{equation}
\lim_{\varepsilon\rightarrow0}\liminf_{n^{\prime}\rightarrow\infty}P\left\{
T_{n'}\in(x_{0}-\varepsilon,x_{0}+\varepsilon)\right\}  >0. \label{atom}%
\end{equation}
By the stochastic boundedness of $T_{n}$ this implies that there is a
subsequence $\left\{  n^{\prime}\right\}  $ such that
\[
T_{n^{\prime}}\overset{\mathrm{D}}{\longrightarrow}T,
\]
where $P\{T=x_{0}\}>0$.
\end{corollary}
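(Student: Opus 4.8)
The plan is to exploit the atom of $X$ at $x_{0}$ together with the fact that the maximizing index $m(n)$ depends only on the $Y$'s, and then to reduce the assertion to a one-sided bound of the kind furnished by Theorem \ref{converse}, \emph{but localized at the constant} $x_{0}$. Throughout write $w_{i}=Y_{i}/\sum_{j=1}^{n}Y_{j}$ (with $0/0:=0$), put $p=P\{X=x_{0}\}>0$ and $\mathcal{G}_{n}=\sigma(Y_{1},\dots,Y_{n})$. Since $m(n)$ and $A_{n}(\varepsilon)$ are $\mathcal{G}_{n}$-measurable while $\{X_{i}\}$ is independent of $\mathcal{G}_{n}$, the variable $X_{m(n)}$ has the law of $X$, so $P\{X_{m(n)}=x_{0}\mid\mathcal{G}_{n}\}=p$ almost surely. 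Set
\[
D_{n}:=\Big|\sum_{i\ne m(n)}w_{i}(X_{i}-x_{0})\Big|.
\]
Because $\sum_{i}w_{i}=1$, on the event $\{X_{m(n)}=x_{0}\}$ the index $m(n)$ contributes nothing to $T_{n}-x_{0}=\sum_{i}w_{i}(X_{i}-x_{0})$, so there $|T_{n}-x_{0}|=D_{n}$. Moreover $D_{n}$ is $\sigma(\mathcal{G}_{n},\{X_{j}:j\ne m(n)\})$-measurable, hence conditionally independent of $\{X_{m(n)}=x_{0}\}$ given $\mathcal{G}_{n}$; factoring the conditional probability gives, for every $n$,
\[
P\{T_{n}\in(x_{0}-\varepsilon,x_{0}+\varepsilon)\}\ \ge\ P\{X_{m(n)}=x_{0},\ D_{n}<\varepsilon\}\ =\ p\,P\{D_{n}<\varepsilon\}.
\]

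The main obstacle is that one \emph{cannot} finish by quoting Theorem \ref{converse}. That theorem controls $\Delta_{n}=|T_{n}-X_{m(n)}|=|\sum_{i\ne m(n)}w_{i}(X_{i}-X_{m(n)})|$; conditioning on $X_{m(n)}$ shows $P\{\Delta_{n}\le\varepsilon\}=\int P\{|\sum_{i\ne m(n)}w_{i}(X_{i}-x)|\le\varepsilon\}\,F(\mathrm{d}x)$, of which $P\{D_{n}<\varepsilon\}$ is but one summand. A bound saying this average is at least $\delta$ yields only an \emph{upper} bound on the individual term, the wrong direction. So $P\{D_{n}<\varepsilon\}$ must be bounded below directly. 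Fortunately this is exactly the estimate behind Theorem \ref{converse} run with the fixed constant $x_{0}$ replacing the random centre $X_{m(n)}$, and it is even simpler since $x_{0}$ is bounded.

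I would carry out that estimate as follows. On $A_{n}(\varepsilon')$ one has $\sum_{i\ne m(n)}w_{i}<\varepsilon'$, whence $D_{n}\le\big|\sum_{i\ne m(n)}w_{i}X_{i}\big|+\varepsilon'|x_{0}|$. As $E|X|<\infty$ and the $X_{i}$ are independent of $\mathcal{G}_{n}$, the conditional Markov inequality gives, on $A_{n}(\varepsilon')$,
\[
P\Big\{\Big|\sum_{i\ne m(n)}w_{i}X_{i}\Big|\ge\tfrac{\varepsilon}{2}\ \Big|\ \mathcal{G}_{n}\Big\}\ \le\ \frac{2\varepsilon'E|X|}{\varepsilon}.
\]
Choosing $\varepsilon'=\varepsilon^{2}$ (so that $\varepsilon'|x_{0}|\le\varepsilon/2$ for all small $\varepsilon$) and integrating over the $\mathcal{G}_{n}$-measurable set $A_{n}(\varepsilon^{2})$ yields
\[
P\{D_{n}<\varepsilon\}\ \ge\ \big(1-2\varepsilon E|X|\big)\,P\{A_{n}(\varepsilon^{2})\}.
\]
Since $\varepsilon\mapsto P\{A_{n}(\varepsilon)\}$ is non-decreasing, hypothesis (\ref{dd}) forces $\liminf_{n'}P\{A_{n'}(u)\}\ge\delta$ for every $u>0$; taking $\liminf_{n'\to\infty}$ and then $\varepsilon\to0$ gives $\lim_{\varepsilon\to0}\liminf_{n'}P\{D_{n'}<\varepsilon\}\ge\delta$, and combining with the first paragraph,
\[
\lim_{\varepsilon\to0}\liminf_{n'\to\infty}P\{T_{n'}\in(x_{0}-\varepsilon,x_{0}+\varepsilon)\}\ \ge\ p\,\delta\ >\ 0,
\]
which is precisely (\ref{atom}).

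Finally, to produce a genuine subsequential limit law carrying an atom, observe that (\ref{atom}) supplies a constant $\kappa>0$ with $\liminf_{n'}P\{T_{n'}\in(x_{0}-\varepsilon,x_{0}+\varepsilon)\}\ge\kappa$ for every $\varepsilon>0$. Since $E|X|<\infty$ makes $\{T_{n}\}$ stochastically bounded, I would extract a further subsequence $\{n''\}\subseteq\{n'\}$ with $T_{n''}\overset{\mathrm{D}}{\longrightarrow}T$. The portmanteau theorem, applied to the closed interval $[x_{0}-\varepsilon,x_{0}+\varepsilon]$, then gives $P\{T\in[x_{0}-\varepsilon,x_{0}+\varepsilon]\}\ge\limsup_{n''}P\{T_{n''}\in[x_{0}-\varepsilon,x_{0}+\varepsilon]\}\ge\kappa$, and letting $\varepsilon\to0$ yields $P\{T=x_{0}\}\ge\kappa>0$, completing the argument.
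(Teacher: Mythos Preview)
Your proof is correct. The paper does not supply an explicit argument for this corollary (it merely says Theorem \ref{converse} ``leads to'' it), so your write-up is in fact more careful than the paper. Your key observation---that the \emph{statement} of Theorem \ref{converse} alone does not suffice because $\Delta_{n}$ involves $X_{m(n)}$ and hence is not independent of $\{X_{m(n)}=x_{0}\}$---is exactly right, and your remedy is the natural one: rerun the estimate underlying Theorem \ref{converse} with the random centre $X_{m(n)}$ replaced by the fixed constant $x_{0}$, exploiting that the controlling events $A_{n}(\varepsilon')$ and the conditional Markov bound depend only on $\mathcal{G}_{n}$ and $\{X_{j}:j\ne m(n)\}$, hence are independent of $X_{m(n)}$. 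This is precisely the mechanism hidden in the paper's proof of Theorem \ref{converse} (there the events $A_{n}(k^{-1})$ and $B_{k,n}$ play the same role), and your version is actually cleaner because with $x_{0}$ fixed you avoid the truncation step via $K_{\eta}$ that the paper needs to control the random $|X_{m(n)}|$. The monotonicity argument giving $\liminf_{n'}P\{A_{n'}(u)\}\ge\delta$ for every $u>0$, and the final portmanteau/subsequence extraction, are both fine.
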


It is well-known (cf.~Theorem 3.2 by Darling \cite{Dar}) that if $Y$ has a
slowly varying upper tail, which by an application of Theorem 1.2.1 of de Haan
\cite{dehaan} is seen to be equivalent to
\begin{equation}
\lim_{x\rightarrow\infty}\frac{x^{2}P\{Y>x\}}{EY^{2}I(Y\leq x)}=\infty
,\label{SV}%
\end{equation}
then (\ref{dd}) holds along the full sequence $\{n\}$ with $\delta=1$. In this
case (\ref{grif}) holds since (\ref{SV}) implies
\[
\sum_{i=1}^{n}Y_{i}/\sqrt{\sum_{i=1}^{n}Y_{i}^{2}}\overset{\mathrm{P}%
}{\longrightarrow}1.
\]
This leads immediately to Proposition 2 in \cite{Brei}:

\begin{corollary}
\label{converse2} Assume that $E|X|<\infty$ and (\ref{SV}) holds. Then
\begin{equation}
T_{n}\overset{\mathrm{D}%
}{\longrightarrow}X. \label{X}%
\end{equation}
\end{corollary}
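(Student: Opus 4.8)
The plan is to read the result directly off Theorem \ref{converse}, once we know that its hypothesis holds with the strongest possible constant $\delta=1$, and then to combine this with the elementary but essential observation that the randomly selected summand $X_{m(n)}$ has exactly the law of $X$. The strategy is thus twofold: first show that $T_n$ is asymptotically indistinguishable in probability from $X_{m(n)}$, and then conclude by a Slutsky-type argument that $T_n \overset{\mathrm{D}}{\longrightarrow} X$.

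For the first step I would invoke the discussion immediately preceding the corollary: under (\ref{SV}) the hypothesis (\ref{dd}) of Theorem \ref{converse} holds along the full sequence $\{n\}$ with $\delta=1$ (this is the content of Darling's Theorem 3.2 \cite{Dar}, using that (\ref{SV}) forces $\sum_{i=1}^{n}Y_i/\sqrt{\sum_{i=1}^{n}Y_i^{2}}\overset{\mathrm{P}}{\longrightarrow}1$). Feeding $\delta=1$ into the conclusion (\ref{BigD}) gives
\[
\lim_{\varepsilon\rightarrow0}\liminf_{n\rightarrow\infty}P\{\Delta_{n}\leq\varepsilon\}\geq 1,
\]
and since each probability is at most $1$ this double limit equals $1$. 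It then remains to upgrade this to genuine convergence in probability $\Delta_{n}\overset{\mathrm{P}}{\longrightarrow}0$. For fixed $n$ the map $\varepsilon\mapsto P\{\Delta_{n}\leq\varepsilon\}$ is non-decreasing, so $g(\varepsilon):=\liminf_{n}P\{\Delta_{n}\leq\varepsilon\}$ is non-decreasing in $\varepsilon$ and hence $\lim_{\varepsilon\rightarrow0}g(\varepsilon)=\inf_{\varepsilon>0}g(\varepsilon)$. Since this infimum is $1$ while $g(\varepsilon)\leq1$, we get $g(\varepsilon)=1$ for every $\varepsilon>0$, i.e. $\lim_{n}P\{\Delta_{n}\leq\varepsilon\}=1$, which is precisely $\Delta_{n}=|T_{n}-X_{m(n)}|\overset{\mathrm{P}}{\longrightarrow}0$.

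The remaining point, which I expect to carry the real content of the proof, is the identification of the distribution of $X_{m(n)}$. Since $m(n)$ is a measurable function of $(Y_{1},\ldots,Y_{n})$ alone and $(X_{i})$ is independent of $(Y_{i})$, conditioning on $\{m(n)=j\}$ and using this independence gives, for every Borel set $B$,
\[
P\{X_{m(n)}\in B\}=\sum_{j=1}^{n}P\{m(n)=j\}\,P\{X_{j}\in B\}=P\{X\in B\},
\]
so that $X_{m(n)}\overset{\mathrm{D}}{=}X$ for every $n$. Writing $T_{n}=X_{m(n)}+(T_{n}-X_{m(n)})$, the first term has the fixed law of $X$ and the second tends to $0$ in probability, so Slutsky's theorem yields $T_{n}\overset{\mathrm{D}}{\longrightarrow}X$, which is exactly (\ref{X}). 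The only genuine subtlety is the monotonicity argument converting the $\liminf$/double-limit conclusion of Theorem \ref{converse} into convergence in probability; once that is in place, the distributional identity $X_{m(n)}\overset{\mathrm{D}}{=}X$ and a one-line Slutsky step finish the argument.
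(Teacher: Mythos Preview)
Your proof is correct and is precisely the argument the paper has in mind: the corollary is stated immediately after the remark that (\ref{SV}) gives (\ref{dd}) along the full sequence with $\delta=1$, and the paper simply says ``This leads immediately to Proposition 2 in \cite{Brei}.'' You have carefully spelled out the steps (the monotonicity upgrade to $\Delta_n\overset{\mathrm{P}}{\to}0$, the identity $X_{m(n)}\overset{\mathrm{D}}{=}X$, and Slutsky) that the paper leaves implicit.
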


Next in the case when $Y$ does not satisfy condition (\ref{grif}) we have the following.

\begin{theorem}
\label{exp} Assume that $E|X|<\infty$ and condition (\ref{grif}) does not hold, then there exists a
subsequence $\left\{  n^{\prime}\right\}  $ of $\left\{  n\right\}  $ and a
random variable $T$ such that
\[
T_{n^{\prime}}\overset{\mathrm{D}}{\longrightarrow}T,
\]
where $P\left\{  T=EX\right\}  >0.$
\end{theorem}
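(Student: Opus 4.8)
The plan is to exploit the failure of (\ref{grif}) in its equivalent form (\ref{op}): writing $R_n = \sum_{i=1}^n Y_i/(\sum_{i=1}^n Y_i^2)^{1/2}$, the hypothesis says $\{R_n\}$ is not $O_P(1)$. First I would extract the subsequence. Since $\{R_n\}$ is not stochastically bounded, $\lim_{M\to\infty}\sup_n P\{R_n > M\} =: c_0 > 0$; a short induction (using that $\sup_{n\geq N} P\{R_n > M\} \geq c_0/2$ for every $N$ and $M$, which holds because otherwise the finitely many remaining terms, whose tails vanish in $M$, would have to carry mass $c_0$) produces a strictly increasing sequence $\{n'\}$ and $M_{n'}\uparrow\infty$ with $P\{R_{n'} > M_{n'}\} \geq c_0/3 =: \delta$. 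Consequently $\liminf_{n'} P\{R_{n'} > M\} \geq \delta$ for every fixed $M$. The role of this subsequence is that with probability bounded away from $0$ the weights $w_i = Y_i/\sum_{j=1}^{n'} Y_j$ are spread out: on $\{R_{n'} > M\}$ one has $\sum_i w_i = 1$ while $\sum_i w_i^2 = R_{n'}^{-2} < M^{-2}$.

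The heart of the matter is a conditional weighted weak law of large numbers showing that spread-out weights force $T_n$ to concentrate at $EX$. With $\tilde X_i = X_i - EX$ (so $E\tilde X = 0$ and $E|\tilde X| < \infty$) one has $T_n - EX = \sum_i w_i \tilde X_i$, a weighted sum of i.i.d.\ centered variables whose weights are measurable with respect to $(Y_1,\dots,Y_n)$. Conditioning on $(Y_1,\dots,Y_n)$ and truncating $\tilde X_i$ at a level $c$, I would bound the tail part by Markov's inequality (its conditional $L^1$-norm is at most $\sum_i w_i\,E[|\tilde X|I(|\tilde X|>c)] = \rho(c)$, using $\sum_i w_i = 1$) and the centered truncated part by Chebyshev's inequality (conditional variance at most $c^2\sum_i w_i^2$, using independence of $X$ and $Y$). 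For $\rho(c)<\eta/4$ this yields
\[
P\{|T_n - EX| > \eta \mid Y_1,\dots,Y_n\} \leq \frac{2\rho(c)}{\eta} + \frac{16\,c^2\sum_i w_i^2}{\eta^2}.
\]
On $\{R_n > M\}$ the second term is at most $16c^2/(M^2\eta^2)$, so choosing, say, $c=\sqrt M$ (whence $\rho(c)\to0$ while $c^2/M^2\to0$) gives a deterministic bound $h_\eta(M)$, with $h_\eta(M)\to0$ as $M\to\infty$, valid uniformly in $n$.

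Combining the two ingredients closes the argument. Since $\{R_{n'}>M\}$ is $(Y_i)$-measurable, taking expectations gives $P\{|T_{n'}-EX|>\eta,\,R_{n'}>M\}\leq h_\eta(M)$, hence $P\{|T_{n'}-EX|\leq\eta\}\geq P\{R_{n'}>M\}-h_\eta(M)$, and letting $n'\to\infty$ and then $M\to\infty$ yields $\liminf_{n'}P\{|T_{n'}-EX|\leq\eta\}\geq\delta$ for every $\eta>0$. Because $E|X|<\infty$ makes $\{T_{n'}\}$ stochastically bounded, I pass to a further subsequence with $T_{n'}\overset{\mathrm{D}}{\longrightarrow}T$; the Portmanteau theorem applied to the closed interval $[EX-\eta,EX+\eta]$ then gives $P\{|T-EX|\leq\eta\}\geq\liminf_{n'}P\{|T_{n'}-EX|\leq\eta\}\geq\delta$, and letting $\eta\searrow0$ produces $P\{T=EX\}\geq\delta>0$. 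The step I expect to be the main obstacle is the weighted law of large numbers under only a first-moment assumption on $X$: had we $EX^2<\infty$, a single application of Chebyshev's inequality would give $\mathrm{Var}(T_n-EX\mid Y)=\mathrm{Var}(X)\sum_i w_i^2\to0$ at once, whereas here the truncation level $c$ must be allowed to grow with $M$ but strictly more slowly than $M$, so as to annihilate both the truncation bias $\rho(c)$ and the variance term $c^2/M^2$ simultaneously.
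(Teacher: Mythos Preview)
Your proof is correct and essentially matches the paper's own argument: both extract a subsequence along which $\sum_i Y_i / (\sum_i Y_i^2)^{1/2}$ is large with probability bounded away from zero, then truncate $X$ and control the truncated and tail pieces by Chebyshev's and Markov's inequalities respectively. The only differences are cosmetic --- the paper writes the truncated part as a product $N_n^C R_n$ rather than conditioning on $(Y_i)$, truncates $X$ rather than $X-EX$, and decouples the truncation level $C$ from the parameter governing the spread of the weights --- but these amount to the same argument.
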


\begin{remark}
Condition (\ref{grif}) (equivalently (\ref{op})) does not hold when $EY<\infty$. To verify this, note
that
\[
R_{n}=\frac{\sqrt{\sum_{i=1}^{n}Y_{i}^{2}}}{\sum_{i=1}^{n}Y_{i}}\leq
\frac{\sqrt{\sum_{i=1}^{n}Y_{i}/n\max\left\{  Y_{1},\dots,Y_{n}\right\}  /n}%
}{\sum_{i=1}^{n}Y_{i}/n}.
\]
Since $EY<\infty$ implies that $\max\left\{  Y_{1},\dots,Y_{n}\right\}
/n\rightarrow0$, a.s., we conclude by the law of large numbers that
$R_{n}\rightarrow0$, a.s. In this case, it is trivial to see that
$T_{n}\rightarrow EX$, a.s., as $n\rightarrow\infty$.$\medskip$
\end{remark}

Finally, let us consider an illustrative case when $E|X|$ is not necessarily
finite. We shall need the following lemma, which is a simple extension of
Breiman's Proposition 3 \cite{Brei}. Since the proof is nearly the same, we
omit it.

\begin{lemma}
\label{Breiman-prop3} Assume that $Y\in D(\beta)$ for some $\beta>0$, and
there exists $\varepsilon>0$ such that $E|X|^{\beta+\varepsilon}<\infty$.
Then
\begin{align*}
\lim_{y\rightarrow\infty}\frac{P\{XY>y\}}{1-G(y)}  &  =\int_{0}^{\infty
}x^{\beta}F(\mathrm{d}x),\\
\lim_{y\rightarrow\infty}\frac{P\{XY<-y\}}{1-G(y)}  &  =\int_{-\infty}%
^{0}(-x)^{\beta}F(\mathrm{d}x).
\end{align*}
\end{lemma}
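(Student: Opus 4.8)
The plan is to condition on $X$ and exploit the regular variation of the upper tail $\overline{G}(y):=1-G(y)=y^{-\beta}L(y)$ through Potter's bounds, which is the classical device behind Breiman's Proposition 3. Since $Y\geq0$, for $y>0$ only positive values of $X$ contribute to the event $\{XY>y\}$, so conditioning gives
\[
P\{XY>y\}=\int_{(0,\infty)}\overline{G}(y/x)\,F(\mathrm{d}x).
\]
The lower tail reduces to the upper one applied to $-X$: since $P\{XY<-y\}=P\{(-X)Y>y\}$, the upper-tail conclusion for the law of $-X$ (whose $(\beta+\varepsilon)$-moment agrees with that of $X$) produces $\int_{(0,\infty)}z^{\beta}F_{-X}(\mathrm{d}z)=\int_{-\infty}^{0}(-x)^{\beta}F(\mathrm{d}x)$. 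Hence it suffices to treat the upper tail.

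Dividing by $\overline{G}(y)$, slow variation of $L$ gives, for each fixed $x>0$, that $\overline{G}(y/x)/\overline{G}(y)=x^{\beta}L(y/x)/L(y)\to x^{\beta}$ as $y\to\infty$, so the result follows once this limit may be taken inside the integral. I would split the range of integration at $x=y/x_{0}$, where $x_{0}=x_{0}(\delta)$ is the Potter threshold for $\overline{G}\in RV_{-\beta}$: fixing $\delta\in(0,\min\{\beta,\varepsilon\})$ there are constants $C$ and $x_{0}$ with $\overline{G}(u)/\overline{G}(v)\leq C\max\{(u/v)^{-\beta+\delta},(u/v)^{-\beta-\delta}\}$ for all $u,v\geq x_{0}$. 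On the main region $\{0<x\leq y/x_{0}\}$ (where $y/x\geq x_{0}$) this bounds the integrand by $C\max\{x^{\beta-\delta},x^{\beta+\delta}\}$, which is $F$-integrable because $\int_{0}^{1}x^{\beta-\delta}F(\mathrm{d}x)\leq1$ and $\int_{1}^{\infty}x^{\beta+\delta}F(\mathrm{d}x)\leq E|X|^{\beta+\varepsilon}<\infty$. Dominated convergence then yields
\[
\int_{(0,\,y/x_{0}]}\frac{\overline{G}(y/x)}{\overline{G}(y)}\,F(\mathrm{d}x)\longrightarrow\int_{0}^{\infty}x^{\beta}F(\mathrm{d}x),\qquad y\to\infty.
\]

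The main obstacle, and the only place the stronger moment $E|X|^{\beta+\varepsilon}<\infty$ (rather than merely $E|X|^{\beta}<\infty$) is needed, is the remaining region $\{x>y/x_{0}\}$, where $y/x<x_{0}$ so that $\overline{G}(y/x)\leq1$ is no longer small. Here I would bound crudely and invoke Markov's inequality:
\[
\int_{(y/x_{0},\infty)}\frac{\overline{G}(y/x)}{\overline{G}(y)}\,F(\mathrm{d}x)\leq\frac{P\{X>y/x_{0}\}}{\overline{G}(y)}\leq\frac{x_{0}^{\beta+\varepsilon}E|X|^{\beta+\varepsilon}}{y^{\beta+\varepsilon}}\cdot\frac{y^{\beta}}{L(y)}=\frac{C'}{y^{\varepsilon}L(y)}\longrightarrow0,
\]
since $y^{\varepsilon}L(y)\to\infty$ by slow variation. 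Combining the two regions gives $P\{XY>y\}/\overline{G}(y)\to\int_{0}^{\infty}x^{\beta}F(\mathrm{d}x)$, and the lower-tail statement follows from the reduction above.
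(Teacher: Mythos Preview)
Your argument is correct and is exactly the classical Breiman-type proof via Potter's bounds and dominated convergence. The paper itself omits the proof of this lemma, remarking only that it is ``nearly the same'' as Breiman's original Proposition~3; your write-up is precisely that argument, with the moment hypothesis $E|X|^{\beta+\varepsilon}<\infty$ replacing $E|X|<\infty$ in the obvious place (the tail region $\{x>y/x_0\}$), so there is nothing to compare.
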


A more general result is given in Proposition II in Cline \cite{Cline}. For
recent results along this line consult Jessen and Mikosch \cite{Jessen} and
Denisov and Zwart \cite{Den}. \smallskip

By substituting the use of Breiman's Proposition 3 in the proof of his Theorem
3 in \cite{Brei} by the above Lemma \ref{Breiman-prop3}, we obtain the
following extension of his Theorem 3, which implies that his asymptotic
distribution result for $T_{n}$ holds in cases when $E|X|=\infty$.

\begin{theorem}
\label{Breiman-th3} Assume that $Y\in D(\beta)$ for some $\beta\in(0,1)$, and
there exists $\varepsilon>0$ such that $E|X|^{\beta+\varepsilon}<\infty$. Then
$T_{n}\overset{\mathrm{D}}{\rightarrow}T,$ where
\begin{equation}
P\left\{  T\leq x\right\}  =\frac{1}{2}+\frac{1}{\pi\beta}\arctan\left[
\frac{\int|u-x|^{\beta}\mathrm{sgn}(x-u)F(\mathrm{d}u)}{\int|u-x|^{\beta
}F(\mathrm{d}u)}\tan\frac{\pi\beta}{2}\right]  . \label{integ}%
\end{equation}
\end{theorem}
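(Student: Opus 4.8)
The plan is to follow Breiman's derivation of his Theorem~3, inserting Lemma~\ref{Breiman-prop3} wherever he invokes his Proposition~3; the whole point of the substitution is that Lemma~\ref{Breiman-prop3} requires only $E|X|^{\beta+\varepsilon}<\infty$, so the argument survives when $E|X|=\infty$. The key reduction is to eliminate the random denominator: since $0/0:=0$ and $P\{\sum_{i=1}^{n}Y_{i}=0\}=P\{Y_{1}=\dots=Y_{n}=0\}\to0$, a short bookkeeping of the event $\{\sum Y_i=0\}$ gives
\[
P\{T_{n}\le x\}=P\Big\{\sum_{i=1}^{n}(X_{i}-x)Y_{i}\le0\Big\}+o(1).
\]
Because the pairs $(X_{i},Y_{i})$ are i.i.d., the summands $V_{i}:=(X_{i}-x)Y_{i}$ are i.i.d., so the problem is transferred to a single sequence with fixed weights and a sign event that is insensitive to positive scaling.

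Next I would apply Lemma~\ref{Breiman-prop3} with $X$ replaced by $X-x$ (which also has a finite moment of order $\beta+\varepsilon$) to obtain, writing $c_{+}(x)=\int(u-x)_{+}^{\beta}F(\mathrm{d}u)$ and $c_{-}(x)=\int(x-u)_{+}^{\beta}F(\mathrm{d}u)$,
\[
\lim_{y\to\infty}\frac{P\{(X-x)Y>y\}}{1-G(y)}=c_{+}(x),\qquad
\lim_{y\to\infty}\frac{P\{(X-x)Y<-y\}}{1-G(y)}=c_{-}(x).
\]
Since $1-G$ is regularly varying of index $-\beta$, these relations place $V=(X-x)Y$ in the domain of attraction of a strictly $\beta$-stable law, and the very constants $a_{n}$ that give the positive stable limit for $a_{n}^{-1}\sum Y_{i}$ also yield $a_{n}^{-1}\sum_{i=1}^{n}V_{i}\overset{\mathrm{D}}{\longrightarrow}S_{x}$, a strictly stable variable of index $\beta$ whose right and left L\'evy tail constants are proportional to $c_{+}(x)$ and $c_{-}(x)$. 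As both the event $\{S_{x}\le0\}$ and the skewness
\[
\gamma(x)=\frac{c_{+}(x)-c_{-}(x)}{c_{+}(x)+c_{-}(x)}=\frac{\int|u-x|^{\beta}\,\mathrm{sgn}(u-x)\,F(\mathrm{d}u)}{\int|u-x|^{\beta}F(\mathrm{d}u)}
\]
are invariant under multiplication of $S_{x}$ by a positive constant, the unknown proportionality factor is immaterial.

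Finally, $0$ is a continuity point of the (continuous) law of $S_{x}$, so $P\{T_{n}\le x\}\to P\{S_{x}\le0\}$. I would then invoke the classical positivity-probability formula for a strictly $\beta$-stable law, $P\{S_{x}>0\}=\tfrac12+\tfrac1{\pi\beta}\arctan(\gamma(x)\tan\tfrac{\pi\beta}{2})$; taking the complement and using $\mathrm{sgn}(x-u)=-\mathrm{sgn}(u-x)$ together with the oddness of $\arctan$ turns $P\{S_{x}\le0\}$ into exactly the right-hand side of (\ref{integ}). That expression is continuous in $x$ and tends to $0$ and $1$ as $x\to\mp\infty$, hence is a genuine cdf, and the pointwise convergence of the distribution functions gives $T_{n}\overset{\mathrm{D}}{\to}T$ with the stated law. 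The step most in need of care is the \emph{absence of a location term} in $S_{x}$: this is legitimate because $\beta<1$ forces the constant $\alpha$ in the limiting law of $a_{n}^{-1}\sum Y_{i}$ to vanish (Proposition~\ref{alpha0}), so the truncation compensator is absorbed into $a_{n}$ and the limit is strictly stable; the degenerate boundary cases $c_{+}(x)=0$ or $c_{-}(x)=0$, where $S_{x}$ is one-sided, are covered by the same formula through continuity. The one genuinely non-routine ingredient is the tail estimate of Lemma~\ref{Breiman-prop3}, which we are entitled to assume.
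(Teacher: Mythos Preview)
Your proposal is correct and is exactly the approach the paper indicates: the paper's entire ``proof'' is the one-line remark that one substitutes Lemma~\ref{Breiman-prop3} for Breiman's Proposition~3 in Breiman's original argument for his Theorem~3, and you have faithfully unpacked that argument (reduction to $P\{\sum_i (X_i-x)Y_i\le 0\}$, tail asymptotics for $(X-x)Y$, strictly $\beta$-stable limit, and the positivity formula). Your flag on the vanishing of the location term is also the point the paper singles out just before Proposition~\ref{alpha0}, noting that Breiman tacitly uses $\alpha=0$; for $Y\in D(\beta)$ with $\beta\in(0,1)$ this is classical, so invoking Proposition~\ref{alpha0} is valid but more than is needed.
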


It is interesting that even in the latter case the tail behavior of the limit
distribution is determined by the distribution of $X$. Note that
\[
\lim_{x\rightarrow\pm\infty}\frac{\int|u-x|^{\beta}\mathrm{sgn}%
(x-u)F(\mathrm{d}u)}{\int|u-x|^{\beta}F(\mathrm{d}u)}=\pm1.
\]
Using that as $y\rightarrow0$
\[
\arctan\left(  (1-y)\tan\frac{\pi\beta}{2}\right)  =\frac{\pi\beta}{2}%
-y\tan\frac{\pi\beta}{2}\left(  1+\tan^{2}\frac{\pi\beta}{2}\right)
^{-1}+O(y^{2}),
\]
we obtain then that
\[
P\{T>x\}\sim2\int_{x}^{\infty}\left(  \frac{u}{x}-1\right)  ^{\beta
}F(\mathrm{d}u)\frac{\tan\frac{\pi\beta}{2}}{\pi\beta\left(  1+\tan^{2}%
\frac{\pi\beta}{2}\right)  },\ \text{as }x\rightarrow\infty.
\]

Without any further assumptions on $F$ we have the simple bounds
\[
\int_{x}^{\infty}\left(  \frac{u}{x} - 1 \right)  ^{\beta} F(\mathrm{d} u)
\geq\int_{2x}^{\infty}1 F(\mathrm{d} u) = 1 - F(2 x),
\]
and
\begin{equation*}
\int_{x}^{\infty}\left(  \frac{u}{x} - 1 \right)  ^{\beta} F(\mathrm{d} u)
\leq \int_{x}^{\infty}\left(  \frac{u}{x} \right)  ^{\beta} F(\mathrm{d} u)
= 1 - F(x) + \beta x^{-\beta} \int_{x}^{\infty}
[ 1 - F(u)] u^{\beta- 1} \mathrm{d} u .
\end{equation*}
Moreover, assuming that $1 - F$ is regularly varying with index $- \alpha$,
with $\alpha> \beta$ it is easy to show that
\[
\int_{x}^{\infty}\left(  \frac{u}{x} - 1 \right)  ^{\beta} F(\mathrm{d} u)
\sim(1 - F(x) ) \beta\int_{1}^{\infty}y^{-\alpha} ( y - 1)^{\beta- 1}
\mathrm{d} y,
\]
as $x \to\infty$, i.e.
\[
\lim_{x \to\infty} \frac{P \{ T > x \} }{1 - F(x) } = 2 \beta\int_{1}^{\infty
}y^{-\alpha} ( y - 1)^{\beta- 1} \mathrm{d} y \frac{\tan\frac{\pi\beta}{2}} {
\pi\beta\left(  1 + \tan^{2} \frac{\pi\beta}{2} \right)  }.
\]
Clearly analogous results are true for the negative tail.

The tail behavior that we just pointed out is in sharp contrast to the
classical self-normalized sum setup, where it is shown by Gin\'{e}, G\"{o}tze and
Mason (Theorem 2.5 in \cite{GGM}) that if the ratio $\sum_{i=1}^{n}Y_{i}/\sqrt
{\sum_{i=1}^{n}Y_{i}^{2}}$ is stochastically bounded, then all the
subsequential limits are subgaussian.

\paragraph{Summary picture}

To summarize, we have developed the following picture: Let $X$ and $Y$ be
independent such that $0<P\left\{  Y>0\right\}  \leq P\left\{  Y\geq0\right\}
=1 $.\smallskip

\noindent(i) If $X$ is non-degenerate, $0<E|X|<\infty$ and $Y\in\mathcal{F}_{c}$ then $T_{n}$ is
stochastically bounded and every subsequential limit random variable $T$ has a
Lebesgue density.

\noindent(ii) If $E\left\vert X\right\vert <\infty$ and $Y\in\mathcal{F}$ but
$Y\notin\mathcal{F}_{c}$ then there exists a subsequence $\left\{  n^{\prime
}\right\}  $ such that $T_{n^{\prime}}\overset{\mathrm{P}}{\longrightarrow}%
EX$.

\noindent(iii) The last result is a special case of the fact that if
$E\left\vert X\right\vert <\infty$ and along a subsequence $\left\{
n^{\prime}\right\}  $ and some sequence $c_{n^{\prime}}\rightarrow\infty$, we
have $c_{n^{\prime}}^{-1}\sum_{i=1}^{n^{\prime}}Y_{i}\overset{\mathrm{P}%
}{\longrightarrow}1,$ as $n^{\prime}\rightarrow\infty,$ then $T_{n^{\prime}%
}\overset{\mathrm{P}}{\longrightarrow}$ $EX.$ 

\noindent(iv) If $E\left\vert X\right\vert <\infty$ and $Y\notin\mathcal{F}$
and (\ref{grif}) holds then there exists a subsequence $\left\{  n^{\prime
}\right\}  $ such that for some $\delta>0$
\[
\lim_{\varepsilon\rightarrow0}\liminf_{n^{\prime}\rightarrow\infty}P\left\{
\min_{1\leq i\leq n^{\prime}}\left\vert T_{n^{\prime}}-X_{i}\right\vert
\leq\varepsilon\right\}  \geq\delta.
\]
Moreover, if $Y$ has a slowly varying upper tail%
\[
\lim_{\varepsilon\rightarrow0}\liminf_{n\rightarrow\infty}P\left\{
\min_{1\leq i\leq n}\left\vert T_{n}-X_{i}\right\vert \leq\varepsilon\right\}
=1.
\]

\noindent(v) If $E\left\vert X\right\vert <\infty$, $Y\notin\mathcal{F}$,
(\ref{grif}) holds and $P\{X=x_{0}\}>0$ for some $x_{0}\in\mathbb{R}$, then
there exists a subsequence $\left\{  n^{\prime}\right\}  $ and a random
variable $T$ such that $T_{n^{\prime}}\overset{\mathrm{D}}{\longrightarrow}T$
and $P\{T=x_{0}\}>0$.

\noindent(vi) If $E\left\vert X\right\vert <\infty$ and (\ref{grif}) does not
hold then there exists a subsequence $\left\{  n^{\prime}\right\}  $ and a
random variable $T$ such that $T_{n^{\prime}}\overset{\mathrm{D}%
}{\longrightarrow} T$ and $P\{T=EX\}>0$.

\noindent(vii) It can happen that $E\left\vert X\right\vert =\infty$ and
$Y\in\mathcal{F}_{c}$ and $T_{n}\overset{\mathrm{P}}{\longrightarrow}$
$\infty.$ 

\noindent(viii) On the other hand, it can also happen that $E\left\vert
X\right\vert =\infty$ and $Y\in\mathcal{F}_{c}$ and $T_{n}\overset{\mathrm{D}%
}{\longrightarrow} T$, where $T$ is non-degenerate.

\section{Proofs}

We shall need the following additional notation. Write for $v>0$
\begin{equation}
\overline{\Lambda}_{n}\left(  v\right)  =n P\left\{  Y>a_{n }v\right\}  =n
\overline{G}\left(  a_{n }v\right)  \label{gam1}%
\end{equation}
and for $u>0$ and $v>0$%
\begin{equation}
\overline{\Pi}_{n}\left(  u,v\right)  =n P\left\{  XY>a_{n}u,Y>a_{n}v\right\}
=\int_{v}^{\infty}\overline{F}\left(  u/s\right)  n G\left(  \mathrm{d}%
sa_{n}\right)  \label{v1}%
\end{equation}
and
\begin{equation}
\Pi_{n}\left(  -u,v\right)  =n P\left\{  XY \leq-a_{n} u,Y>a_{n}v\right\}
=\int_{v}^{\infty}F\left(  -u/s\right)  n G\left(  \mathrm{d}sa_{n}\right)  .
\label{v2}%
\end{equation}
The following lemma is well-known (see Corollary 15.16 of (\cite{Kallenberg})):

\begin{lemma}
\label{CHT-Kallenberg} Let $\{\xi_{n,j}\}_{j=1}^{m_{n}}$ be an i.i.d. array in
$\mathbb{R}^{d}$. Then $\sum_{j=1}^{m_{n}}\xi_{n,j}$ converges in distribution
to an infinitely divisible $\mathrm{id}(a,b,\nu)$ random vector if and only if for some (any) $h>0$
with $\nu(x:|x|=h)=0$ we have, with $\overset{v}{\rightarrow}$ denoting vague convergence,

\begin{itemize}
\item[(e.i)] $m_{n}P\circ\xi_{n,1}^{-1}\overset{v}{\rightarrow}\nu$ on
$\mathbb{R}^{d}\backslash\{\mathbf{0}\}$,

\item[(e.ii)] $m_{n} E[\xi_{n,1} I\{ |\xi_{n,1}|\leq h \} ]\rightarrow b^{h}$,

\item[(e.iii)] $m_{n}E[\xi_{n,1}\xi_{n,1}^{\prime}
I\{ |\xi_{n,1}| \leq h \} ]\rightarrow a^{h}$,
\end{itemize}
where $a^h$ and $b^h$ are defined above (\ref{ff}).
\end{lemma}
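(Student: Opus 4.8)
The plan is to argue entirely through characteristic functions. Because the array is i.i.d., for each $u\in\mathbb{R}^{d}$ the row sum $\sum_{j=1}^{m_{n}}\xi_{n,j}$ has characteristic function $\phi_{n}(u)^{m_{n}}$, where $\phi_{n}(u)=Ee^{\mathrm{i}u^{\prime}\xi_{n,1}}$, so the whole statement reduces to showing that $\sum_{j}\xi_{n,j}\to\mathrm{id}(a,b,\nu)$ in distribution is equivalent to $m_{n}(\phi_{n}(u)-1)\to\psi(u)$ for every $u$, where $\psi$ is the L\'{e}vy--Khintchine exponent attached to $(a,b,\nu)$. The bridge between $m_{n}\log\phi_{n}(u)$ and $m_{n}(\phi_{n}(u)-1)$ is the identity $\log(1+z)=z+O(z^{2})$, valid once $\phi_{n}(u)\to1$; hence the first step is the standard preliminary that convergence of the row sums to an infinitely divisible law (with $m_{n}\to\infty$) forces the array to be infinitesimal, i.e.\ $\phi_{n}(u)\to1$ uniformly on compacts and $m_{n}(\phi_{n}(u)-1)^{2}\to0$, which licenses replacing $m_{n}\log\phi_{n}$ by $m_{n}(\phi_{n}-1)$ throughout.

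For the sufficiency direction I would fix a continuity radius $h$ (so $\nu(|x|=h)=0$) and write $m_{n}(\phi_{n}(u)-1)=\int(e^{\mathrm{i}u^{\prime}x}-1)\,\mu_{n}(\mathrm{d}x)$ with $\mu_{n}=m_{n}\,P\circ\xi_{n,1}^{-1}$. Splitting off the linear compensator on $\{|x|\le h\}$ turns this into $\mathrm{i}u^{\prime}m_{n}E[\xi_{n,1}I\{|\xi_{n,1}|\le h\}]$ plus $\int g\,\mathrm{d}\mu_{n}$, where $g(x)=e^{\mathrm{i}u^{\prime}x}-1-\mathrm{i}u^{\prime}xI\{|x|\le h\}$. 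The first piece converges to $\mathrm{i}u^{\prime}b^{h}$ by (e.ii). For $\int g\,\mathrm{d}\mu_{n}$ I would cut at radii $\delta<h<M$: on the annulus $\{\delta<|x|\le M\}$ the integrand $g$ is bounded, continuous and supported away from the origin, so (e.i) gives convergence to $\int_{\delta<|x|\le M}g\,\mathrm{d}\nu$; near the origin I would use $g(x)=-\tfrac12(u^{\prime}x)^{2}+o(|x|^{2})$ together with (e.iii) applied at radius $\delta$ to obtain $-\tfrac12u^{\prime}a^{\delta}u$, then let $\delta\to0$ so that $a^{\delta}\to a$ peels off exactly the Gaussian component; the far tail $\{|x|>M\}$ is controlled by $2\mu_{n}(|x|>M)$. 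Collecting the three regions yields $\mathrm{i}u^{\prime}b^{h}-\tfrac12u^{\prime}au+\int(e^{\mathrm{i}u^{\prime}x}-1-\mathrm{i}u^{\prime}xI\{|x|\le h\})\,\nu(\mathrm{d}x)$, and a short bookkeeping using $b^{h}=b-\int_{h<|x|\le1}x\,\nu(\mathrm{d}x)$ and $a^{h}=a+\int_{|x|\le h}xx^{\prime}\nu(\mathrm{d}x)$ shows this equals $\psi(u)$, independently of $h$.

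For the necessity direction I would run the computation backwards. Once infinitesimality is recorded, the preceding analysis shows $m_{n}(\phi_{n}(u)-1)\to\psi(u)$. The L\'{e}vy measure is recovered from the large-jump behaviour: for each $\delta$ the restriction of $\mu_{n}$ to $\{|x|>\delta\}$ is tight, so along subsequences it converges vaguely, and uniqueness of the L\'{e}vy--Khintchine representation of $\psi$ forces every such subsequential limit to coincide with $\nu$, giving (e.i) along the full sequence. With (e.i) in hand, subtracting the large-jump integral from $m_{n}(\phi_{n}(u)-1)$ isolates the quadratic-plus-drift part; matching the real and imaginary parts then delivers (e.iii) and subsequently (e.ii).

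The main obstacle is analytic rather than structural: vague convergence on $\mathbb{R}^{d}\setminus\{\mathbf{0}\}$ does not by itself control the integrand $e^{\mathrm{i}u^{\prime}x}-1$, which fails to vanish at infinity, so the crux is a no-escape-of-mass estimate showing $\limsup_{n}\mu_{n}(|x|>M)\to0$ as $M\to\infty$, combined with the clean interchange of the three limits $n\to\infty$, $\delta\to0$, $M\to\infty$ that separates the Gaussian part (accumulated small jumps) from the genuine jump part. Establishing infinitesimality in the necessity direction is the other delicate point. Since the statement is classical, the full technical execution may alternatively be quoted from Corollary 15.16 of \cite{Kallenberg}.
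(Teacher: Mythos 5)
The paper does not actually prove this lemma: it is stated as well known and quoted directly from Corollary 15.16 of \cite{Kallenberg}, which is precisely the fallback you offer in your final sentence. So there is no internal proof to compare against; what can be judged is your sketch on its own terms. Its architecture is the classical characteristic-function route (accompanying laws, Gnedenko--Kolmogorov style): reduce to $m_{n}(\phi_{n}(u)-1)\rightarrow\psi(u)$, split the integral at radii $\delta<h<M$, and let $a^{\delta}\rightarrow a$ peel off the Gaussian component. That is indeed how the quoted result is proved, and the outline is sound.

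Two points, however, need repair before the sketch is a proof. First, infinitesimality alone does not give $m_{n}(\phi_{n}(u)-1)^{2}\rightarrow0$; you also need $m_{n}\left\vert 1-\phi_{n}(u)\right\vert$ bounded on compacts. In the necessity direction this comes from the fact that an infinitely divisible characteristic function never vanishes, so $m_{n}\log\left\vert \phi_{n}\right\vert ^{2}$ converges and $m_{n}(1-\left\vert \phi_{n}\right\vert ^{2})$ is bounded; in the sufficiency direction it comes from the integrals controlled by (e.i)--(e.iii). As written, the ``license'' to replace $m_{n}\log\phi_{n}$ by $m_{n}(\phi_{n}-1)$ is asserted rather than earned. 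Second, and more substantively, the no-escape-of-mass estimate $\limsup_{n}\mu_{n}(\left\vert x\right\vert >M)\rightarrow0$ that you correctly single out as the crux is \emph{not provable} from (e.i) if vague convergence on $\mathbb{R}^{d}\backslash\{\mathbf{0}\}$ is read with test sets that are bounded and bounded away from the origin: adding an atom of mass $c/m_{n}$ at a point $x_{n}\rightarrow\infty$ leaves (e.i)--(e.iii) intact in that reading, while with probability tending to $1-e^{-c}$ some summand equals $x_{n}$, so the row sums cannot converge. The tail control is part of the hypothesis, not a consequence: (e.i) must be understood so that unbounded sets bounded away from the origin, such as $\{\left\vert x\right\vert >r\}$, are admissible test sets (equivalently, vague convergence on the punctured compactification), which is exactly how the paper uses it --- in Proposition \ref{Levy-measure} condition (e.i) is verified in the form $\overline{\Lambda}_{n^{\prime}}(v)\rightarrow\overline{\Lambda}(v)$ for the tail sets $(v,\infty)$. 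Finally, your parenthetical $m_{n}\rightarrow\infty$ is genuinely needed (for $m_{n}\equiv1$ a fixed infinitely divisible summand converges while (e.i) fails), so it should be promoted to a standing assumption, as it implicitly is in the paper where $m_{n}=n^{\prime}$. With these provisos your outline matches the standard proof; citing Corollary 15.16 of \cite{Kallenberg}, as the paper does, short-circuits all of it.
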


The following lemma determines the continuity points of the two-di\-men\-sional
L\'evy measure.

\begin{lemma}
\label{levy-cont} Any $\left(  u,v\right)  \in\left[  0,\infty\right)
\times\left(  0,\infty\right)  $ is a continuity point of $\overline{\Pi}$
only if $F\left(  u/s\right)  $ and $\overline{\Lambda}\left(  s\right)  $
% considered
as functions of $s$ are not discontinuous at the same points in
$(v,\infty)$ and $\overline{F}\left(  u/v-\right)  \Lambda(\{v\})=0$; and any
$\left(  -u,v\right)  \in\left(  -\infty,0\right]  \times\left(
0,\infty\right)  $ is a continuity point of $\Pi$ only if $F\left(
-u/s\right)  $ and $\overline{\Lambda}\left(  s\right)  $
as functions of $s$ are not discontinuous at the same points in $(v,\infty)$ and
$\overline{F}\left(  -u/v-\right)  \Lambda(\{v\})=0$.
\end{lemma}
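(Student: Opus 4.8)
The plan is to recognize both objects as the $\Pi$-measures of planar quadrants and to reduce the continuity question to the vanishing of the $\Pi$-mass on the boundary of those quadrants. By (\ref{LM}) the measure $\Pi$ is the image of $\mu_F\otimes\Lambda$ under $(x,s)\mapsto(xs,s)$, where $\mu_F$ is the law of $X$; consequently
\[
\overline\Pi(u,v)=\Pi\big((u,\infty)\times(v,\infty)\big),\qquad \Pi(-u,v)=\Pi\big((-\infty,-u]\times(v,\infty)\big).
\]
Fix $(u_0,v_0)$ with $v_0>0$ and restrict attention to the half-plane $\{y>v_0/2\}$, on which $\Pi$ is finite because $\overline\Lambda(v_0/2)<\infty$. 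Each quadrant-survival function is monotone in both coordinates, so every one-sided limit exists, and the symmetric differences between the quadrant at $(u_0,v_0)$ and nearby quadrants decrease monotonically to subsets of the topological boundary. Hence $(u_0,v_0)$ is a continuity point exactly when $\Pi$ assigns zero mass to that boundary, and the whole lemma reduces to computing the $\Pi$-mass of the two boundary edges of each quadrant.

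For the quadrant $Q^{+}=(u,\infty)\times(v,\infty)$ the boundary is $(\{u\}\times[v,\infty))\cup([u,\infty)\times\{v\})$. From the image representation the vertical edge carries mass $\int_{[v,\infty)}P\{X=u/s\}\,\Lambda(\mathrm ds)$, while the horizontal edge, which meets only the atom of $\Lambda$ at $v$ and includes the corner $x=u$, carries mass $P\{Xv\ge u\}\Lambda(\{v\})=\overline F(u/v-)\Lambda(\{v\})$. The atom-at-$v$ contribution to the vertical edge, namely $P\{X=u/v\}\Lambda(\{v\})$, is dominated by the horizontal term, so requiring both edges to be null is equivalent to
\[
\int_{(v,\infty)}P\{X=u/s\}\,\Lambda(\mathrm ds)=0 \quad\text{and}\quad \overline F(u/v-)\Lambda(\{v\})=0 .
\]
The first equality says that no $s\in(v,\infty)$ is simultaneously a jump of $F(u/\cdot)$ (i.e.\ $P\{X=u/s\}>0$) and a jump of $\overline\Lambda$ (i.e.\ $\Lambda(\{s\})>0$), which is precisely the no-common-discontinuity condition; the second is the stated atom condition. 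The same analysis applied to $Q^{-}=(-\infty,-u]\times(v,\infty)$, whose horizontal edge $(-\infty,-u]\times\{v\}$ now includes the corner $x=-u$, produces the vertical-edge condition for $F(-u/\cdot)$ together with the horizontal mass $P\{Xv\le -u\}\Lambda(\{v\})=F(-u/v)\Lambda(\{v\})$.

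The delicate point --- and the one I expect to be the only real obstacle --- is the bookkeeping of one-sided limits that the closures of the horizontal edges force upon us: because $Q^{+}$ is open in the $x$-direction its horizontal-edge closure contains $\{x=u\}$ and the corresponding mass is governed by $\{X\ge u/v\}$, i.e.\ by the left limit $\overline F(u/v-)$, whereas for the closed quadrant $Q^{-}$ the sign reversal turns $\{X\ge\cdot\}$ into $\{X\le -u/v\}$ and the relevant value is the ordinary $F(-u/v)$ rather than a left limit. This interplay of the open/closed nature of the quadrant with the sign flip in the argument is exactly what must be tracked with care; once the edge masses are in hand, only monotone/dominated convergence (justified by $\overline\Lambda(v_0/2)<\infty$ and $0\le F\le1$) is needed to confirm the continuity $\Leftrightarrow$ null-boundary equivalence, and the lemma's necessity follows at once (the computation in fact yields necessity and sufficiency).
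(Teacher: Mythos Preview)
Your proof is correct and essentially parallel to the paper's, though you cast the computation more geometrically. Where the paper evaluates the one-sided increment
\[
\lim_{\widetilde u\uparrow u,\ \widetilde v\uparrow v}\bigl(\overline\Pi(\widetilde u,\widetilde v)-\overline\Pi(u,v)\bigr)
=\overline F(u/v-)\Lambda(\{v\})+\int_{(v,\infty)}\bigl(F(u/s)-F(u/s-)\bigr)\Lambda(\mathrm ds)
\]
and reads off the two non-negative summands, you recognize $\overline\Pi(u,v)$ as the $\Pi$-mass of the quadrant $(u,\infty)\times(v,\infty)$, invoke the image representation $\Pi=(\mu_F\otimes\Lambda)\circ\bigl((x,s)\mapsto(xs,s)\bigr)^{-1}$, and reduce continuity to the vanishing of the boundary mass. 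Both routes land on the same two integrals; yours yields the ``if and only if'' for free and treats the $\Pi(-u,v)$ half uniformly, whereas the paper's ``in the same way'' is slightly loose there, since the single lower-left limit for $\Pi(-u,v)$ recovers only the horizontal-edge term and a second direction is needed for the no-common-discontinuity clause.

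Your bookkeeping for $Q^{-}$ is also sharper than the printed statement: the horizontal-edge mass is $P\{Xv\le -u\}\Lambda(\{v\})=F(-u/v)\Lambda(\{v\})$, not $\overline F(-u/v-)\Lambda(\{v\})$ as the lemma reads. Your version is the correct one (for instance, if $X>0$ a.s.\ and $\Lambda(\{v\})>0$ then $(-u,v)$ is a continuity point while $\overline F(-u/v-)\Lambda(\{v\})=\Lambda(\{v\})>0$). This slip is harmless for the rest of the paper, which only ever uses the no-common-discontinuity clause of the lemma.
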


\begin{proof}
We see that
\begin{align*}
\lim_{\widetilde{u}\uparrow u,\widetilde{v}\uparrow v}    \left(
\overline{\Pi}\left(  \widetilde{u},\widetilde{v}\right)  -\overline{\Pi
}\left(  u,v\right)  \right)
&  =\lim_{\widetilde{u}\uparrow u,\widetilde{v}\uparrow v}\int_{\widetilde{v}%
}^{v}\overline{F}(\widetilde{u}/s)\Lambda(\mathrm{d}s)+\lim_{\widetilde
{u}\uparrow u}\int_{v}^{\infty}\left(  F\left(  u/s\right)  -F\left(
\widetilde{u}/s\right)  \right)  \Lambda\left(  \mathrm{d}s\right) \\
&  =\overline{F}\left(  u/v-\right)  \Lambda(\{v\})+\int_{v}^{\infty}\left(
F\left(  u/s\right)  -F\left(  u/s-\right)  \right)  \Lambda\left(
\mathrm{d}s\right)  ,
\end{align*}
\textit{\smallskip}which is zero only if $F\left(  u/s\right)  $ and
$\overline{\Lambda}\left(  s\right)  $ are not discontinuous at the same
points in $(v,\infty)$ and $\overline{F}\left(  u/v-\right)  \Lambda
(\{v\})=0$. The second part of the lemma is proved in the same way.
\end{proof}

Next we deal with the convergence of the L\'evy measures.

\begin{proposition} \label{Levy-measure}
Assume that at every continuity point $v \in \left(  0,\infty\right)$
of $\overline{\Lambda}$
\begin{equation}
\overline{\Lambda}_{n^{\prime}}\left(  v\right)  \rightarrow\overline{\Lambda
}\left(  v\right)  \text{, as }n^{\prime}\rightarrow\infty, \label{Gz}%
\end{equation}
and assume that for every (some) continuity point $h>0$ of $\Lambda$
\begin{equation}
\int_{0}^{h}vn^{\prime}G\left(  \mathrm{d}a_{n^{\prime}}v\right)  =\int
_{0}^{h}v{\Lambda}_{n^{\prime}}\left(  \mathrm{d}v\right)  \rightarrow
\alpha_{h}\text{, as }n^{\prime}\rightarrow\infty\text{,} \label{Ga}%
\end{equation}
holds where $\alpha_{h}<\infty$. Then at every continuity
point $\left(  u,v\right)  \in\left[  0,\infty\right)  \times\left[
0,\infty\right)$ of $\overline{\Pi}$ such that $\left(  u,v\right)
\neq\left(  0,0\right)  $
\begin{equation}
\overline{\Pi}_{n^{\prime}}\left(  u,v\right)  \rightarrow\overline{\Pi
}\left(  u,v\right)  \text{, \textit{as} }n^{\prime}\rightarrow\infty,
\label{L1}%
\end{equation}
and at every continuity point $\left(  -u,v\right)  \in\left(
-\infty,0\right]  \times\left[  0,\infty\right)$ of $\Pi$ such that
$\left(  u,v\right)  \neq\left(  0,0\right)$
\begin{equation}
\Pi_{n^{\prime}}\left(  -u,v\right)  \rightarrow\Pi\left(  -u,v\right)
\text{, \textit{as} }n^{\prime}\rightarrow\infty. \label{L2}%
\end{equation}
\end{proposition}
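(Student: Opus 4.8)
The plan is to read both (\ref{L1}) and (\ref{L2}) as convergence of integrals of a bounded kernel against the sequence of measures $\Lambda_{n'}$ on $(0,\infty)$ defined by $\Lambda_{n'}(\mathrm{d}s)=n'G(\mathrm{d}a_{n'}s)$, so that by (\ref{v1}) we have $\overline{\Pi}_{n'}(u,v)=\int_{(v,\infty)}\overline{F}(u/s)\Lambda_{n'}(\mathrm{d}s)$, with target $\overline{\Pi}(u,v)=\int_{(v,\infty)}\overline{F}(u/s)\Lambda(\mathrm{d}s)$ from (\ref{v3}). The first step is to note that (\ref{Gz}), being the convergence of the tails $\overline{\Lambda}_{n'}(v)=\Lambda_{n'}((v,\infty))$ to $\overline{\Lambda}(v)=\Lambda((v,\infty))$ at every continuity point, is equivalent to the vague convergence $\Lambda_{n'}\overset{v}{\rightarrow}\Lambda$ on $(0,\infty)$. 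Fixing a continuity point $(u,v)$ of $\overline{\Pi}$ with $(u,v)\neq(0,0)$, I would then split $(v,\infty)=(v,v\vee\epsilon]\cup(v\vee\epsilon,M]\cup(M,\infty)$ at a small and a large continuity point $0<\epsilon<M$ of $\overline{\Lambda}$, where the first piece is empty when $v>0$ (choose $\epsilon<v$) and equals $(0,\epsilon]$ when $v=0$.

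On the middle region $(v\vee\epsilon,M]$, which is bounded away from $0$ and $\infty$, vague convergence restricts to weak convergence (the endpoints being continuity points of $\overline{\Lambda}$), and the integrand $s\mapsto\overline{F}(u/s)$ is bounded by $1$. The only difficulty is that this integrand is generally discontinuous, and here the hypothesis that $(u,v)$ is a continuity point of $\overline{\Pi}$ does exactly the required work: by Lemma \ref{levy-cont} it forces the discontinuities of $s\mapsto\overline{F}(u/s)$ to avoid the atoms of $\Lambda$ throughout $(v,\infty)$, and $\overline{F}(u/v-)\Lambda(\{v\})=0$ disposes of the left endpoint. Thus the discontinuity set of the integrand is $\Lambda$-null, and the extended continuous mapping (portmanteau) theorem gives $\int_{(v\vee\epsilon,M]}\overline{F}(u/s)\Lambda_{n'}(\mathrm{d}s)\to\int_{(v\vee\epsilon,M]}\overline{F}(u/s)\Lambda(\mathrm{d}s)$.

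The tail piece is handled crudely by $\int_{(M,\infty)}\overline{F}(u/s)\Lambda_{n'}(\mathrm{d}s)\le\overline{\Lambda}_{n'}(M)\to\overline{\Lambda}(M)$, which is uniformly small once $M$ is large since $\Lambda$ is a L\'evy measure and hence $\overline{\Lambda}(M)\to0$; the same bound controls the limiting tail. The near-zero piece, present only when $v=0$, is where (\ref{Ga}) and the assumption $E|X|<\infty$ enter, and I expect it to be the main obstacle. The point is that $E|X|<\infty$ yields $x\overline{F}(x)\to0$, so with $\eta(\epsilon)=u^{-1}\sup_{x\ge u/\epsilon}x\overline{F}(x)$ one has $\overline{F}(u/s)\le s\,\eta(\epsilon)$ for $0<s\le\epsilon$ and $\eta(\epsilon)\to0$ as $\epsilon\to0$. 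Hence, taking $\epsilon<h$ with $h$ the continuity point from (\ref{Ga}), $\int_{(0,\epsilon]}\overline{F}(u/s)\Lambda_{n'}(\mathrm{d}s)\le\eta(\epsilon)\int_0^{\epsilon}s\Lambda_{n'}(\mathrm{d}s)\le\eta(\epsilon)\int_0^{h}s\Lambda_{n'}(\mathrm{d}s)$, and the last integral stays bounded by (\ref{Ga}); so the near-zero part is at most $\eta(\epsilon)\sup_{n'}\int_0^{h}s\Lambda_{n'}(\mathrm{d}s)$, vanishing as $\epsilon\to0$ uniformly in $n'$, while the analogous bound with $\int_0^{\epsilon}s\Lambda(\mathrm{d}s)$ (finite by (\ref{ff})) handles the limit. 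I want to stress the subtlety here: (\ref{Ga}) only controls $\int s\,\Lambda_{n'}(\mathrm{d}s)$ near the origin, so it is the \emph{strict} decay $\overline{F}(u/s)=o(s)$ coming from $E|X|<\infty$, rather than mere boundedness, that lets the possible escape of mass of $\Lambda_{n'}$ to $0$ be absorbed.

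Finally I would combine the three estimates in the usual order — first choose $\epsilon$ small and $M$ large to force the near-zero and tail contributions (and their limits) below any prescribed $\delta$ uniformly in $n'$, then let $n'\to\infty$ in the middle piece — to obtain (\ref{L1}). The proof of (\ref{L2}) is verbatim, replacing $\overline{F}(u/s)$ by $F(-u/s)$ and $\overline{F}(u/v-)$ by $F(-u/v-)$, invoking the second half of Lemma \ref{levy-cont} and the bound $F(-u/s)\le s\,\tilde{\eta}(\epsilon)$ supplied by $xF(-x)\to0$.
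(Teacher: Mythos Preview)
Your proposal is correct and follows essentially the same route as the paper's proof: both split the domain of integration into a bounded middle piece (handled via Lemma \ref{levy-cont} and weak convergence, the paper citing Breiman's Proposition 8.12 where you invoke the portmanteau theorem), a far tail controlled by $\overline{\Lambda}(M)$, and---when $v=0$---a near-zero piece disposed of by combining $s^{-1}\overline{F}(u/s)\to 0$ with the first-moment bound (\ref{Ga}). The only cosmetic difference is that the paper treats the cases $v>0$ and $v=0$ separately rather than unifying them through your $v\vee\epsilon$ notation.
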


\begin{proof}
First choose any continuity point $(u,v)\in\left[
0,\infty\right)  \times\left(  0,\infty\right)  $ of $\overline{\Pi}$ and let
$\gamma>v$ be a continuity point of $\overline{\Lambda}$. By (\ref{Gz})
\begin{equation}
\limsup_{n^{\prime}\rightarrow\infty}\int_{\gamma}^{\infty}\overline{F}\left(
u/s\right)  \Lambda_{n^{\prime}}\left(  \mathrm{d}s\right)  \leq
\overline{\Lambda}\left(  \gamma\right)  . \label{L4}%
\end{equation}
By Lemma \ref{levy-cont}, $F\left(  u/s\right)  $ and $\overline{\Lambda
}\left(  s\right)  $ are not discontinuous at the same points in $(v,\gamma]$,
and since the set of discontinuities of $F\left(  u/s\right)  $ on $(v,\gamma]
$ is countable and those have $\Lambda$ measure zero, assumption (\ref{Gz})
allows us to conclude that
\begin{equation}
\lim_{n^{\prime}\rightarrow\infty}\int_{v}^{\gamma}\overline{F}\left(
u/s\right)  \Lambda_{n^{\prime}}\left(  \mathrm{d}s\right)  =\int_{v}^{\gamma
}\overline{F}\left(  u/s\right)  \Lambda\left(  \mathrm{d}s\right)  ,
\label{L3}%
\end{equation}
(see the proof of Proposition 8.12 on page 163 of \cite{Breiman-Prob}). Since
$\overline{\Lambda}\left(  \gamma\right)  $ can be made arbitrarily small by
choosing $\gamma$ arbitrarily large we readily infer (\ref{L1}) from
(\ref{L3}) and (\ref{L4}).

To prove the convergence in (\ref{L1}) when $u>0$ and $v=0$ we shall need
assumption (\ref{Ga}). We have to show that for any continuity point
$\gamma>0$
\[
\int_{0}^{\gamma}\overline{F}(u/s)\Lambda_{n^{\prime}}(\mathrm{d}%
s)\rightarrow\int_{0}^{\gamma}\overline{F}(u/s)\Lambda(\mathrm{d}s).
\]
Using that the convergence (\ref{L3}) holds for any continuity points
$0<v<\gamma$ of $\overline{\Lambda}$ it is enough to prove the convergence
\[
\limsup_{n^{\prime}\rightarrow\infty}\int_{0}^{v}\overline{F}(u/s)\Lambda
_{n^{\prime}}(\mathrm{d}s)\rightarrow0,\quad\text{as }v\rightarrow0.
\]
Since $s^{-1}\overline{F}(u/s)\rightarrow0$, as $s\rightarrow0$, (\ref{Ga})
implies the statement keeping mind that $\alpha_{h}\searrow\alpha<\infty$ as
$h\searrow0$ for some finite $\alpha\geq0$. Statement (\ref{L2}) is proved in
the same way.
\end{proof}

\begin{lemma}
\label{F-cont} Put
\begin{equation}
\varphi\left(  v\right)  =\sqrt{h^{2}-v^{2}}/v. \label{phi}%
\end{equation}
For any $v\in\left(  0,h\right]  $, $\left\{  \left(  \sqrt{h^{2}-v^{2}%
},v\right)  \right\}  $ has $\Pi$ measure zero only if $v$ is a continuity
point of $F\left(  \varphi\left(  v\right)  \right)  $ considered as a
function on $\left(  0,h\right]  $, or $\Lambda\left(  \left\{  v\right\}
\right)  =0$; and $\left\{  \left(  -\sqrt{h^{2}-v^{2}},v\right)  \right\}  $
has $\Pi$ measure zero only if $v$ is a continuity point of $F\left(
-\varphi\left(  v\right)  \right)  $, considered as a function on $\left(
0,h\right]  $, or $\Lambda\left(  \left\{  v\right\}  \right)  =0$.
\end{lemma}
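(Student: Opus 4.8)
The plan is to compute the $\Pi$-mass of the single point $\left( \sqrt{h^{2}-v^{2}},v\right)$ directly from the defining formula (\ref{LM}) and show that it factors as a jump of $F$ times the atom of $\Lambda$ at $v$. The whole lemma then reduces to reading off when such a product vanishes, in close analogy with the limit computation already performed in the proof of Lemma \ref{levy-cont}.

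First I would isolate the mass carried by a vertical segment. Fix $u_{0}\in\mathbb{R}$ and $0<c<d<\infty$, and let $a\uparrow u_{0}=:b$ in (\ref{LM}). Since $a/s\uparrow u_{0}/s$ for each $s>0$ and the integrand lies in $[0,1]$, dominated convergence against $\Lambda$ restricted to $(c,d]$ (which is finite because $\overline{\Lambda}(c)<\infty$) yields
\[
\Pi\left( \{u_{0}\}\times(c,d] \right) =\int_{c}^{d}\left( F(u_{0}/s)-F(u_{0}/s-)\right) \Lambda(\mathrm{d}s).
\]
Next I would localize in the $v$-coordinate by letting $c\uparrow v$ with $d=v$; then $(c,v]\downarrow\{v\}$, and by continuity from above of the finite measure $\Lambda|_{(c_{0},v]}$ the integral picks out precisely the atom of $\Lambda$ at $v$, giving
\[
\Pi\left( \{(u_{0},v)\}\right) =\left( F(u_{0}/v)-F(u_{0}/v-)\right) \Lambda(\{v\}).
\]

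Specializing $u_{0}=\sqrt{h^{2}-v^{2}}$, so that $u_{0}/v=\varphi(v)$, produces
\[
\Pi\left( \{(\sqrt{h^{2}-v^{2}},v)\}\right) =\left( F(\varphi(v))-F(\varphi(v)-)\right) \Lambda(\{v\}).
\]
This product is zero exactly when one of its factors vanishes: either $\Lambda(\{v\})=0$, or $F$ has no jump at $\varphi(v)$. To phrase the latter as in the statement, I would observe that $\varphi$ is continuous and strictly decreasing on $(0,h]$ (mapping onto $[0,\infty)$), so the jump of $v\mapsto F(\varphi(v))$ at $v$ equals $F(\varphi(v))-F(\varphi(v)-)$; hence $F(\varphi(\cdot))$ is continuous at $v$ if and only if $F$ is continuous at $\varphi(v)$. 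The second assertion is identical, taking $u_{0}=-\sqrt{h^{2}-v^{2}}$ and thus the factor $F(-\varphi(v))-F(-\varphi(v)-)$.

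The only step needing care — and the main obstacle — is the double passage to the limit defining the point mass: one must justify that sending $a\uparrow u_{0}$ inside (\ref{LM}) produces the left value $F(u_{0}/s-)$ under the integral sign, and that shrinking $(c,v]$ to $\{v\}$ isolates exactly $\Lambda(\{v\})$. Both are routine consequences of dominated/monotone convergence once one uses that $\Lambda$ is finite on sets bounded away from $0$, together with the left limits of $F$ and the right-continuity of $\overline{\Lambda}$; this is the same mechanism used in the proof of Lemma \ref{levy-cont}.
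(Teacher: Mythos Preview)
Your proposal is correct and follows essentially the same route as the paper: both arguments compute the point mass $\Pi(\{(u_{0},v)\})$ by shrinking rectangles in (\ref{LM}) and arrive at the identical factorization $\bigl(F(u_{0}/v)-F(u_{0}/v-)\bigr)\Lambda(\{v\})$, from which the statement is read off. The only cosmetic difference is that the paper shrinks via a single parameter $\tilde v\searrow v$ along the arc, whereas you separate the two limits (first $a\uparrow u_{0}$, then $c\uparrow v$); your version is in fact slightly more explicit about the dominated-convergence justification.
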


\begin{proof}
Select any $0<v<h,$ then for all $v<\widetilde{v}<h$,
we have%
\[
\frac{v\sqrt{h^{2}-\widetilde{v}^{2}}}{\widetilde{v}}<\sqrt{h^{2}%
-\widetilde{v}^{2}}<\sqrt{h^{2}-v^{2}}%
\]
and by (\ref{LM})
\begin{equation*}
\Pi \Bigg(  \bigg(  \frac{v\sqrt{h^{2}-\widetilde{v}^{2}}}{\widetilde{v}},
\sqrt{h^{2}-v^{2}} \bigg]  \times\left\{  v\right\}  \Bigg)
=\left( F\left(  \frac{\sqrt{h^{2}-v^{2}}}{v}\right)  -
F\left(  \frac{\sqrt {h^{2}-\widetilde{v}^{2}}}{\widetilde{v}}\right)  \right)
\cdot\Lambda\left( \left\{  v\right\}  \right)  .
\end{equation*}
Now
\[
\lim_{\widetilde{v}\searrow v} \Pi \Bigg(  \left(
\frac{v\sqrt{h^{2} -\widetilde{v}^{2}}}{\widetilde{v}},\sqrt{h^{2}-v^{2}} \right]
\times\left\{ v\right\}  \Bigg)
=\Pi\left(  \left\{  \left(  \sqrt{h^{2}-v^{2}},v\right)
\right\}  \right)
\]
and%
\begin{equation*}
\lim_{\widetilde{v}\searrow v}
\Bigg(  F \left(  \frac{\sqrt{h^{2}-v^{2}}}{v} \right)
-F \left(  \frac{\sqrt{h^{2}-\widetilde{v}^{2}}}{\widetilde{v}} \right)  \Bigg)
\cdot \Lambda\left(  \left\{  v\right\}  \right)
=\left(
F\left(  \varphi\left(  v\right)  \right)  -F\left(  \varphi\left(  v\right)
-\right)  \right)  \cdot\Lambda\left(  \left\{  v\right\}  \right)  ,
\end{equation*}
where $\varphi(\cdot)$ is defined in (\ref{phi}). This says that
\[
\Pi\left(  \left\{  \left(  \sqrt{h^{2}-v^{2}},v\right)  \right\}  \right)
=\left(  F\left(  \varphi\left(  v\right)  \right)  -F\left(  \varphi\left(
v\right)  -\right)  \right)  \cdot\Lambda\left(  \left\{  v\right\}  \right)
.
\]
Similarly,
\[
\Pi\left(  \left\{  \left(  -\sqrt{h^{2}-v^{2}},v\right)  \right\}  \right)
=\left(  F\left(  -\varphi\left(  v\right)  \right)  -F\left(  -\varphi\left(
v\right)  -\right)  \right)  \cdot\Lambda\left(  \left\{  v\right\}  \right)
.
\]
We also obtain that with $v=h$,
\[
\Pi\left(  \left\{  \left(  0,h\right)  \right\}  \right)  =\left(  F\left(
0\right)  -F\left(  0-\right)  \right)  \cdot\Lambda\left(  \left\{
h\right\}  \right)  ,
\]
and the proof is complete.
\end{proof}

Let%
\[
B_{h}=\left\{  \left(  u,v \right)  :\sqrt{v^{2}+u^{2}}\leq h,v>0\right\}
\]
and
\[
C_{h}=\left\{  \left(  \sqrt{h^{2}-v^{2}}, v \right)  :0<v\leq h\right\}
\cup\left\{  \left(  -\sqrt{h^{2}-v^{2}}, v \right)  :0<v\leq h\right\}  .
\]

\begin{remark}
\label{remark-cont} Lemma \ref{F-cont} says that when $\Pi\left(
C_{h}\right)  =0$, then $F\left(  \varphi\left(  v\right)  \right)  $ and
$\overline{\Lambda}\left(  v\right)  $ are not discontinuous at the same
points in $\left(  0,h\right]  $; and $F\left(  -\varphi\left(  v\right)
\right)  $ and $\overline{\Lambda}\left(  v\right)  $ are not discontinuous at
the same points in $\left(  0,h\right)  $.
\end{remark}

\begin{lemma} \label{lambda-int}
Suppose (\ref{Gz}) is satisfied and for
every continuity point $h>0$ of $\overline{\Lambda}$, (\ref{Ga}) holds
where $\alpha_{h}<\infty$. Then
\begin{equation}
\int_{0}^{1}z\Lambda(\mathrm{d}z)<\infty. \label{z1}%
\end{equation}
\end{lemma}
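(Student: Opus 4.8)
The plan is to read the hypotheses as statements about the sequence of measures $\Lambda_{n'}$ on $(0,\infty)$ determined by $\Lambda_{n'}\left((v,\infty)\right)=\overline{\Lambda}_{n'}(v)=n'\overline{G}(a_{n'}v)$, cf.~(\ref{gam1}). Condition (\ref{Gz}) is precisely the statement that $\Lambda_{n'}$ converges vaguely to $\Lambda$ on $(0,\infty)$, since convergence of the tails at all continuity points of $\overline{\Lambda}$ is equivalent to vague convergence there. The obstruction to simply integrating $z$ against this convergence is that the total mass of $\Lambda_{n'}$ near the origin may blow up, and vague convergence on $(0,\infty)$ gives no control whatsoever in a neighborhood of $0$. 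This is exactly the gap that (\ref{Ga}) is designed to close, as it bounds the $z$-weighted near-origin mass $\int_0^h z\,\Lambda_{n'}(\mathrm{d}z)$ uniformly in $n'$.

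First I would fix two continuity points $0<\delta<h$ of $\overline{\Lambda}$ and establish that
\[
\int_{(\delta,h]}z\,\Lambda_{n'}(\mathrm{d}z)\longrightarrow\int_{(\delta,h]}z\,\Lambda(\mathrm{d}z),\quad\text{as }n'\to\infty.
\]
On the compact interval $[\delta,h]$ the integrand $z$ is bounded and continuous, and since $\delta,h$ are continuity points of $\overline{\Lambda}$ (hence not atoms of $\Lambda$) the endpoints contribute nothing to the limit; this is the same vague-convergence reasoning used to obtain (\ref{L3}) in the proof of Proposition \ref{Levy-measure} (see Proposition 8.12, p.~163 of \cite{Breiman-Prob}).

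Next comes the one-sided bound that renders (\ref{Ga}) usable. Because $\Lambda_{n'}$ is a nonnegative measure,
\[
\int_{(\delta,h]}z\,\Lambda_{n'}(\mathrm{d}z)\leq\int_{0}^{h}z\,\Lambda_{n'}(\mathrm{d}z)\longrightarrow\alpha_h,
\]
the convergence being (\ref{Ga}). Combining this with the previous display yields $\int_{(\delta,h]}z\,\Lambda(\mathrm{d}z)\leq\alpha_h$ for every continuity point $\delta\in(0,h)$.

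Finally, I would let $\delta\searrow0$ through continuity points of $\overline{\Lambda}$, which form a cocountable and hence dense set. By monotone convergence $\int_{(\delta,h]}z\,\Lambda(\mathrm{d}z)\uparrow\int_{(0,h]}z\,\Lambda(\mathrm{d}z)$, so $\int_{(0,h]}z\,\Lambda(\mathrm{d}z)\leq\alpha_h<\infty$. Choosing the continuity point $h\geq1$ then gives $\int_0^1 z\,\Lambda(\mathrm{d}z)\leq\int_{(0,h]}z\,\Lambda(\mathrm{d}z)<\infty$, which is (\ref{z1}). I expect the main obstacle to be conceptual rather than computational: vague convergence on $(0,\infty)$ is blind to the behaviour near the origin, and the crux of the argument is to transfer the uniform near-origin control furnished by (\ref{Ga}) onto the limiting measure $\Lambda$ by means of the nonnegativity inequality and monotone convergence, rather than by any direct passage to the limit of $\int_0^h z\,\Lambda_{n'}(\mathrm{d}z)$.
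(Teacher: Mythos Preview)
Your proof is correct and follows essentially the same route as the paper: both fix continuity points $0<\gamma<h$ (you write $\delta$ for $\gamma$), use vague convergence to identify $\lim_{n'}\int_{\gamma}^{h} z\,\Lambda_{n'}(\mathrm{d}z)=\int_{\gamma}^{h} z\,\Lambda(\mathrm{d}z)$, bound this by $\alpha_h$ via nonnegativity and (\ref{Ga}), and then send $\gamma\searrow 0$ by monotone convergence. The only cosmetic difference is that the paper takes $h\leq 1$ while you take $h\geq 1$; either choice works.
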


\begin{proof}
Let $1\geq h>\gamma>0$, be continuity points of
$\overline{\Lambda}$. By assumptions (\ref{Gz}) and (\ref{Ga})
\[
\alpha_{h}=\lim_{{n^{\prime}}\rightarrow\infty}\int_{0}^{h}vn^{\prime}G\left(
\mathrm{d}a_{n^{\prime}}v\right)  \geq\lim_{{n^{\prime}}\rightarrow\infty}%
\int_{\gamma}^{h}vn^{\prime}G\left(  \mathrm{d}a_{n^{\prime}}v\right)
=\int_{\gamma}^{h}z\Lambda(\mathrm{d}z)\geq0,
\]
which implies that
\[
\infty>\alpha_{h}\geq\lim_{\gamma\searrow0}\int_{\gamma}^{h}z\Lambda
(\mathrm{d}z)=\int_{0}^{h}z\Lambda(\mathrm{d}z)\geq0.
\]
\end{proof}

\begin{remark}
\label{remark-alpha} Applying Lemma \ref{CHT-Kallenberg}, we see that
assumption (\ref{Y-conv}) implies that (\ref{Gz}) and (\ref{Ga}) hold with%
\begin{equation}
\alpha_{h}=b-\int_{h}^{1}z\Lambda(\mathrm{d}z)=b^{h}\text{ and }\alpha
=b-\int_{0}^{1}z\Lambda(\mathrm{d}z)\geq0, \label{alpha}%
\end{equation}
where
\begin{equation}
\alpha=\lim_{h\searrow0}\alpha_{h}\geq0, \label{alpha2}%
\end{equation}
in accordance with the notation in Theorem \ref{2dim-conv}.
This shows that (\ref{alp1}) holds.
\end{remark}

Notice that
\[
\frac{n^{\prime}}{a_{n^{\prime}}}E\left(  Y I \left\{  \sqrt{\left(
XY\right)  ^{2}+Y^{2}}\leq a_{n^{\prime}}h\right\}  \right)  =\int_{B_{h}%
}F\left(  \frac{\mathrm{d}u}{v}\right)  n^{\prime}vG\left(  \mathrm{d}%
a_{n^{\prime}}v\right)
\]
and
\[
\frac{n^{\prime}}{a_{n^{\prime}}}E\left(  XY I \left\{  \sqrt{\left(
XY\right)  ^{2}+Y^{2}}\leq a_{n^{\prime}}h\right\}  \right)  =\int_{B_{h}%
}uF\left(  \frac{\mathrm{d}u}{v}\right)  n^{\prime}G\left(  \mathrm{d}%
a_{n^{\prime}}v\right)  .
\]
Define the functions of $v\in$ $\left(  0,h\right]  $
\[
\phi\left(  v\right)  =\int_{\left[  -\sqrt{h^{2}-v^{2}},\sqrt{h^{2}-v^{2}%
}\right]  }F\left(  \frac{\mathrm{d}u}{v}\right)  =F\left(  \varphi\left(
v\right)  \right)  -F\left(  -\varphi\left(  v\right)  -\right)
\]
and
\[
\psi\left(  v\right)  =\int_{\left[  -\sqrt{h^{2}-v^{2}},\sqrt{h^{2}-v^{2}%
}\right]  }uF\left(  \frac{\mathrm{d}u}{v}\right)  ,
\]
where $\varphi(\cdot)$ is defined in (\ref{phi}). Observe that
\begin{equation}
\phi\left(  v\right)  \nearrow1\text{, as }v\searrow0, \label{zz}%
\end{equation}
and
\begin{equation}
\psi\left(  v\right)  /v\rightarrow EX\text{, as }v\searrow0. \label{ee}%
\end{equation}

Now we can prove the convergence of the truncated expectations.

\begin{proposition}
\label{1st moments} \textit{Assume }(\ref{Gz}), (\ref{Ga}) \textit{and}
$\Pi\left(  C_{h}\right)  =0.$ \textit{Then}
\begin{equation}
\lim_{n^{\prime}\rightarrow\infty}\int_{B_{h}} F\left(  \frac{\mathrm{d} u}%
{v}\right)  n^{\prime}v G\left(  \mathrm{d} a_{n^{\prime}}v\right)
=\alpha+\int_{0}^{h}\phi\left(  v\right)  v \Lambda\left(  \mathrm{d}
v\right)  \label{La}%
\end{equation}
\textit{and}
\begin{equation}
\lim_{n^{\prime}\rightarrow\infty}\int_{B_{h}}u F\left(  \frac{\mathrm{d}
u}{v}\right)  n^{\prime} G\left(  \mathrm{d} a_{n^{\prime}}v\right)  =\alpha E
X+\int_{0}^{h}\psi\left(  v\right)  \Lambda\left(  \mathrm{d} v \right)  .
\label{Lb}%
\end{equation}
\end{proposition}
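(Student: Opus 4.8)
The plan is to carry out the $u$-integration first, reducing each two-dimensional integral to a one-dimensional integral against $\Lambda_{n^{\prime}}$, and then to pass to the limit after isolating a neighbourhood of the origin. For fixed $v>0$ the inner measure $F(\mathrm{d}u/v)$ is the law of $vX$, so Fubini's theorem together with the very definitions of $\phi$ and $\psi$ gives, writing $\Lambda_{n^{\prime}}(\mathrm{d}v)=n^{\prime}G(\mathrm{d}a_{n^{\prime}}v)$,
\[
\int_{B_{h}}F\left(\frac{\mathrm{d}u}{v}\right)n^{\prime}v\,G(\mathrm{d}a_{n^{\prime}}v)=\int_{(0,h]}\phi(v)\,v\,\Lambda_{n^{\prime}}(\mathrm{d}v),\qquad \int_{B_{h}}uF\left(\frac{\mathrm{d}u}{v}\right)n^{\prime}G(\mathrm{d}a_{n^{\prime}}v)=\int_{(0,h]}\psi(v)\,\Lambda_{n^{\prime}}(\mathrm{d}v).
\]
Here $0\le\phi(v)\le1$ since $\phi(v)=P\{|X|\le\varphi(v)\}$, and $\psi(v)=v\,E[XI\{|X|\le\varphi(v)\}]$, so $|\psi(v)|\le vE|X|$. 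In this way (\ref{La}) and (\ref{Lb}) become convergence statements for ordinary integrals against the measures $\Lambda_{n^{\prime}}$.

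Next I would fix a continuity point $\varepsilon\in(0,h)$ of $\overline{\Lambda}$ and split each integral into the pieces over $(0,\varepsilon]$ and $(\varepsilon,h]$. On the compact part $(\varepsilon,h]$, where $\Lambda$ is finite, the integrands $\phi(v)v$ and $\psi(v)$ are bounded and their only possible discontinuities are those of $F(\pm\varphi(v))$. By the hypothesis $\Pi(C_{h})=0$ together with Lemma~\ref{F-cont} and Remark~\ref{remark-cont}, these discontinuities do not coincide with discontinuities of $\overline{\Lambda}$ and hence carry no $\Lambda$-mass, while the chosen endpoints $\varepsilon,h$ are atom-free. Arguing exactly as in the proof of Proposition~\ref{Levy-measure} (via Proposition 8.12 of \cite{Breiman-Prob}), the vague convergence $\Lambda_{n^{\prime}}\to\Lambda$ supplied by (\ref{Gz}) then yields $\int_{(\varepsilon,h]}\phi(v)v\,\Lambda_{n^{\prime}}(\mathrm{d}v)\to\int_{(\varepsilon,h]}\phi(v)v\,\Lambda(\mathrm{d}v)$ and likewise for $\psi$.

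The contribution near the origin is where the constants $\alpha$ and $\alpha EX$ are produced, and this is the part that uses assumption (\ref{Ga}). By (\ref{zz}), $\phi(v)\to1$ as $v\searrow0$, so for $\varepsilon$ small I would sandwich $\int_{(0,\varepsilon]}\phi(v)v\,\Lambda_{n^{\prime}}(\mathrm{d}v)$ between $(1-\eta)\int_{(0,\varepsilon]}v\,\Lambda_{n^{\prime}}(\mathrm{d}v)$ and $\int_{(0,\varepsilon]}v\,\Lambda_{n^{\prime}}(\mathrm{d}v)$, where $\eta=\eta(\varepsilon)\to0$; by (\ref{Ga}) both bounds tend to multiples of $\alpha_{\varepsilon}$. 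Similarly, using (\ref{ee}) that $\psi(v)/v\to EX$, I would approximate $\int_{(0,\varepsilon]}\psi(v)\,\Lambda_{n^{\prime}}(\mathrm{d}v)$ by $EX\int_{(0,\varepsilon]}v\,\Lambda_{n^{\prime}}(\mathrm{d}v)\to EX\,\alpha_{\varepsilon}$, with error at most $\eta\int_{(0,\varepsilon]}v\,\Lambda_{n^{\prime}}(\mathrm{d}v)$. Since $\alpha_{\varepsilon}\to\alpha$ and $\int_{0}^{\varepsilon}v\,\Lambda(\mathrm{d}v)\to0$ as $\varepsilon\searrow0$ by Remark~\ref{remark-alpha} and Lemma~\ref{lambda-int}, the near-origin pieces tend to $\alpha$ and $\alpha EX$, while the $(\varepsilon,h]$-integrals tend to $\int_{0}^{h}\phi(v)v\,\Lambda(\mathrm{d}v)$ and $\int_{0}^{h}\psi(v)\,\Lambda(\mathrm{d}v)$, giving precisely (\ref{La}) and (\ref{Lb}).

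The main obstacle is the interchange of the limits $n^{\prime}\to\infty$ and $\varepsilon\searrow0$ near the origin. I would handle it by a $\limsup$/$\liminf$ argument in $n^{\prime}$ for each fixed continuity point $\varepsilon$, so that the uniform approximations $\phi(v)\approx1$ and $\psi(v)/v\approx EX$ on $(0,\varepsilon]$ yield error terms that vanish as $\varepsilon\searrow0$. The delicate point is that $\phi$ and $\psi/v$ are only known to converge \emph{as} $v\searrow0$; this suffices precisely because the weight $v\,\Lambda_{n^{\prime}}(\mathrm{d}v)$ has total mass $\int_{(0,\varepsilon]}v\,\Lambda_{n^{\prime}}(\mathrm{d}v)\to\alpha_{\varepsilon}$ on $(0,\varepsilon]$ uniformly bounded in the limit, so the oscillation of $\phi$ and $\psi/v$ away from their limiting values is damped by a controllably small mass.
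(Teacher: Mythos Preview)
Your proposal is correct and follows essentially the same route as the paper's proof: reduce the double integrals to $\int_{0}^{h}\phi(v)v\,\Lambda_{n'}(\mathrm{d}v)$ and $\int_{0}^{h}\psi(v)\,\Lambda_{n'}(\mathrm{d}v)$, split at a continuity point $\gamma$ of $\overline{\Lambda}$, use (\ref{Gz}) together with Remark~\ref{remark-cont} and the Helly--Bray-type argument (Proposition 8.12 of \cite{Breiman-Prob}) on $(\gamma,h]$, and handle the piece on $(0,\gamma]$ via (\ref{Ga}), (\ref{zz}) and (\ref{ee}) exactly as you describe. The paper writes the near-origin estimate as the explicit inequality $\bigl|\int_{0}^{\gamma}\phi(v)v\,\Lambda_{n'}(\mathrm{d}v)-\alpha\bigr|\le |1-\phi(\gamma)|\alpha_{\gamma}+|\alpha-\alpha_{\gamma}|$ (using the monotonicity of $\phi$) and its analogue with $\sup_{0<v\le\gamma}|EX-v^{-1}\psi(v)|$ for $\psi$, which is precisely the quantitative form of your sandwiching/$\limsup$--$\liminf$ argument.
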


\begin{proof}
Observe that
\[
\int_{B_{h}}F\left(  \frac{\mathrm{d}u}{v}\right)  n^{\prime}vG\left(
\mathrm{d}a_{n^{\prime}}v\right)  =\int_{0}^{h}\phi\left(  v\right)
vn^{\prime}G\left(  \mathrm{d}a_{n^{\prime}}v\right)
\]
and
\[
\int_{B_{h}}uF\left(  \frac{\mathrm{d}u}{v}\right)  n^{\prime}G\left(
\mathrm{d}a_{n^{\prime}}v\right)  =\int_{0}^{h}\psi\left(  v\right)
n^{\prime}G\left(  \mathrm{d}a_{n^{\prime}}v\right)  .
\]
Choose any $0<\gamma<h$ such that $\gamma$ is a continuity point of
$\overline{\Lambda}$. Notice that since $\Pi\left(  C_{h}\right)  =0$ we can
infer from Remark \ref{remark-cont} that for any such $\gamma$ the functions
of $v$ defined in $\left(  \gamma,h\right]  $ by $\phi\left(  v\right)  v$ and
$\psi\left(  v\right)  $ do not share the same discontinuity points as
$\overline{\Lambda}$. Thus since these functions are also bounded on $\left(
\gamma,h\right]  ,$ assumption (\ref{Gz}) implies as in the argument that
gives (\ref{L3}) that
\begin{equation}
\lim_{n^{\prime}\rightarrow\infty}\int_{\gamma}^{h}\phi\left(  v\right)
vn^{\prime}G\left(  \mathrm{d}a_{n^{\prime}}v\right)  =\int_{\gamma}^{h}%
\phi\left(  v\right)  v\Lambda\left(  \mathrm{d}v\right)  \label{Laa}%
\end{equation}
and
\begin{equation}
\lim_{n^{\prime}\rightarrow\infty}\int_{\gamma}^{h}\psi\left(  v\right)
n^{\prime}G\left(  \mathrm{d}a_{n^{\prime}}v\right)  =\int_{\gamma}^{h}%
\psi\left(  v\right)  \Lambda\left(  \mathrm{d}v\right)  . \label{Lbb}%
\end{equation}
Next, using the monotonicity of $\phi$ we see that
\[
\left\vert \int_{0}^{\gamma}\phi\left(  v\right)  vn^{\prime}G\left(
\mathrm{d}a_{n^{\prime}}v\right)  -\alpha\right\vert
\leq\left\vert 1-\phi\left(  \gamma\right)  \right\vert \int_{0}^{\gamma
}vn^{\prime}G\left(  \mathrm{d}a_{n^{\prime}}v\right)  +\left\vert \alpha
-\int_{0}^{\gamma}vn^{\prime}\mathrm{d}G\left(  a_{n^{\prime}}v\right)
\right\vert .
\]
Therefore, by  (\ref{Ga})
\[
\limsup_{n^{\prime}\rightarrow\infty}\left\vert \int_{0}^{\gamma}\phi\left(
v\right)  vn^{\prime}G\left(  \mathrm{d}a_{n^{\prime}}v\right)  -\alpha
\right\vert \leq\left\vert 1-\phi\left(  \gamma\right)  \right\vert
\alpha_{\gamma}+\left\vert \alpha-\alpha_{\gamma}\right\vert .
\]
Similarly
\begin{equation*}
\limsup_{n^{\prime}\rightarrow\infty}\left\vert \int_{0}^{\gamma}\psi\left(
v\right)  n^{\prime}G\left(  \mathrm{d}a_{n^{\prime}}v\right)  -\alpha
EX\right\vert
\leq\sup_{0<v\leq\gamma} \left\vert EX-v^{-1}\psi\left(
v\right)  \right\vert \alpha_{\gamma}+\left\vert \alpha-\alpha_{\gamma
}\right\vert \left\vert EX \right \vert .
\end{equation*}
As $\gamma\rightarrow0$ the statements follow from the definition of $\alpha$
given in (\ref{alpha2}), (\ref{zz}) and (\ref{ee}).
\end{proof}

Observe that
\[
\frac{n^{\prime}}{a_{n^{\prime}}^{2}} E \left(  Y^{2} I \left\{
\sqrt{\left(  XY\right)  ^{2}+Y^{2}}\leq a_{n^{\prime}}h\right\}  \right)
=\int_{B_{h}} F\left(  \frac{\mathrm{d} u}{v}\right)  n^{\prime}v^{2} G\left(
\mathrm{d} a_{n^{\prime}}v\right)
\]
and
\[
\frac{n^{\prime}}{a_{n^{\prime}}^{2}} E \left(  \left(  XY\right)
^{2}  I \left\{  \sqrt{\left(  XY\right)  ^{2}+Y^{2}}\leq a_{n^{\prime}%
}h\right\}  \right)  =\int_{B_{h}}u^{2} F\left(  \frac{\mathrm{d} u}%
{v}\right)  n^{\prime} G\left(  \mathrm{d} a_{n^{\prime}}v\right)  .
\]

\begin{proposition}
\label{2nd-moments} Assume (\ref{Gz}) and (\ref{Ga}). Then for every $h>0$
such that $\Pi\left(  C_{h}\right)  =0$,
\begin{equation}
\lim_{n^{\prime}\rightarrow\infty}\int_{B_{h}}u^{2} F\left(  \frac{\mathrm{d}
u}{v}\right)  n^{\prime} G\left(  \mathrm{d} a_{n^{\prime}}v\right)
=\int_{B_{h}}u^{2} \Pi\left(  \mathrm{d} u, \mathrm{d} v\right)  , \label{a1}%
\end{equation}%
\begin{equation}
\lim_{n^{\prime}\rightarrow\infty}\int_{B_{h}} F\left(  \frac{\mathrm{d} u}%
{v}\right)  n^{\prime}v^{2} G\left(  \mathrm{d} a_{n^{\prime}}v\right)
=\int_{B_{h}}v^{2} \Pi\left(  \mathrm{d} u, \mathrm{d} v\right)  \label{a2}%
\end{equation}
and
\begin{equation}
\lim_{n^{\prime}\rightarrow\infty}\int_{B_{h}} u v F\left(  \frac{\mathrm{d}
u}{v}\right)  n^{\prime} G\left(  \mathrm{d} a_{n^{\prime}}v\right)
=\int_{B_{h}}u v \Pi\left(  \mathrm{d} u,\mathrm{d} v\right)  . \label{a3}%
\end{equation}
Moreover
\begin{equation}
\lim_{h\searrow0}\limsup_{n^{\prime}\rightarrow\infty}\int_{B_{h}}F\left(
\frac{\mathrm{d} u}{v}\right)  n^{\prime}v^{2} G\left(  \mathrm{d}
a_{n^{\prime}}v\right)  =0 \label{zer1}%
\end{equation}
and
\begin{equation}
\lim_{h\searrow0}\limsup_{n^{\prime}\rightarrow\infty}\int_{B_{h}}u^{2}
F\left(  \frac{\mathrm{d} u}{v}\right)  n^{\prime} G\left(  \mathrm{d}
a_{n^{\prime}}v\right)  =0. \label{zer2}%
\end{equation}
\end{proposition}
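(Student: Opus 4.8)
The plan is to prove the three limit identities \eqref{a1}--\eqref{a3} by establishing vague convergence of the measures $n'G(\mathrm{d}sa_{n'})$ composed with $F(\mathrm{d}u/v)$ to the L\'evy measure $\Pi$ on the closed region $B_h$, and then integrating the bounded continuous functions $u^2$, $v^2$ and $uv$ against them. The key point is that on $B_h$ these three test functions are bounded (since $u^2+v^2\le h^2$ there) and continuous, so convergence of the integrals will follow once we know the underlying measures converge and that no mass escapes across the boundary $C_h$.

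**Main steps.** First I would observe that the condition $\Pi(C_h)=0$, via Lemma \ref{F-cont} and Remark \ref{remark-cont}, guarantees that the boundary of $B_h$ carries no $\Pi$-mass, so $B_h$ is a $\Pi$-continuity set. This is precisely the hypothesis needed to upgrade the vague convergence of L\'evy measures (established in Proposition \ref{Levy-measure}) to convergence of integrals of bounded functions supported on $B_h$. Second, I would split each integral at a continuity-point level $\gamma>0$ of $\overline{\Lambda}$, writing $\int_{B_h}=\int_{\gamma<v\le h}+\int_{0<v\le \gamma}$. On the annular region $\gamma<v\le h$ the convergence follows exactly as in the argument giving \eqref{L3} and \eqref{Laa}--\eqref{Lbb}: the integrands, as functions of $v$, are bounded and do not share discontinuity points with $\overline{\Lambda}$ (again by $\Pi(C_h)=0$), so assumption \eqref{Gz} delivers convergence to the corresponding $\Pi$-integral over that region. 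Third, for the small-$v$ part I would use the crucial extra factor of $v$ or $v^2$: since the test functions $u^2,v^2,uv$ all satisfy $|g(u,v)|\le h^2$ on $B_h$ but additionally vanish at the rate $v^2$ relative to $n'v\,G(\mathrm{d}a_{n'}v)$, the mass near $v=0$ is controlled by $\int_0^\gamma v\,\Lambda_{n'}(\mathrm{d}v)\le \alpha_\gamma$ times a factor tending to $0$ as $\gamma\searrow0$. Letting $\gamma\searrow0$ then yields \eqref{a1}--\eqref{a3}, and the same small-$v$ estimate, taken over the full $B_h$ and letting $h\searrow0$ with $\alpha_h\searrow\alpha<\infty$, gives the negligibility statements \eqref{zer1} and \eqref{zer2}.

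**The main obstacle.** The delicate step is the control of the contribution near $v=0$ for the quadratic integrals. Unlike the first-moment Proposition \ref{1st moments}, where the factor $\phi(v)\to1$ and $\psi(v)/v\to EX$ let us compare directly with $\alpha$, here the second-moment integrands behave like $v^2\cdot O(1)$, and one must argue that $\int_{0<v\le\gamma} v^2\,n'G(\mathrm{d}a_{n'}v)$ and $\int_{0<v\le\gamma} u^2 F(\mathrm{d}u/v)\,n'G(\mathrm{d}a_{n'}v)$ both vanish uniformly in $n'$ as $\gamma\searrow0$. For the former this is immediate since $v^2\le \gamma v$ on $(0,\gamma]$, giving the bound $\gamma\,\alpha_\gamma\to0$; for the latter one uses that on $B_h$ we have $u^2\le h^2-v^2\le h^2$, together with the bound $u^2 \le (h/v)\cdot v\cdot|u|$ type estimates, reducing it again to $\int_0^\gamma v\,\Lambda_{n'}(\mathrm{d}v)$ up to a factor controlled by $\sup_{0<v\le\gamma}\!\int u^2 F(\mathrm{d}u/v)$ over $B_h$, which tends to $0$ as $\gamma\searrow0$ because $E|X|<\infty$ forces the truncated second moment of $X$ restricted to $|X|\le\varphi(v)$ to grow slower than $\varphi(v)^2$. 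Verifying this last uniform estimate carefully, using only $E|X|<\infty$ rather than a finite second moment, is where the real work lies; everything else is a routine application of the vague-convergence machinery already assembled in Proposition \ref{Levy-measure} and Lemma \ref{lambda-int}.
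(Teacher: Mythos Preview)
Your approach is essentially the same as the paper's: split $\int_{B_h}$ at a continuity level $\gamma$ of $\overline{\Lambda}$, handle the piece $\gamma<v\le h$ exactly as in Proposition~\ref{1st moments} using \eqref{Gz} and the fact that $\Pi(C_h)=0$ rules out shared discontinuities, and then show the $0<v\le\gamma$ piece is negligible via \eqref{Ga}. The paper treats the three identities \eqref{a1}--\eqref{a3} in one line by saying they ``can be carried out the same way as in the previous proposition,'' and it proves \eqref{zer1}--\eqref{zer2} first rather than last.

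The one place where the paper is noticeably cleaner is your ``main obstacle.'' You worry about controlling $\int_{0<v\le\gamma}\!\int u^2\,F(\mathrm{d}u/v)\,n'G(\mathrm{d}a_{n'}v)$ directly via growth estimates on the truncated second moment $E[X^2 I(|X|\le\varphi(v))]$. The paper sidesteps this entirely with the trivial bound $u^2\le h|u|$ (and $v^2\le hv$) on $B_h$, which gives
\[
\int_{B_h} u^2\,F\Big(\frac{\mathrm{d}u}{v}\Big)\,n'G(\mathrm{d}a_{n'}v)\ \le\ h\int_{B_h}|u|\,F\Big(\frac{\mathrm{d}u}{v}\Big)\,n'G(\mathrm{d}a_{n'}v),
\]
and the right-hand side converges by the argument of \eqref{Lb} applied with $|X|$ in place of $X$; multiplying by $h$ and letting $h\searrow0$ gives \eqref{zer2} (and \eqref{zer1} analogously from \eqref{La}). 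This also furnishes the small-$v$ control needed for \eqref{a1}--\eqref{a3} without any separate second-moment analysis. Your route works too---your claim amounts to $vE[X^2 I(|X|\le\varphi(v))]\to0$, which follows from $E|X|<\infty$ by dominated convergence---but the bound $u^2\le h|u|$ is the shortcut you were looking for.
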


\begin{proof}
In the proof of (\ref{zer1}) and (\ref{zer2}) we can
assume without loss of generality that $\Pi\left(  C_{h}\right)  =0$ for all
$h>0$ sufficiently small, since we only need it to be true for a countable
number of $h\searrow0$, and this holds trivially. We see that
\[
\int_{B_{h}} F\left(  \frac{\mathrm{d} u}{v}\right)  n^{\prime}v^{2} G\left(
\mathrm{d} a_{n^{\prime}}v\right)  \leq h\int_{B_{h}} F\left(  \frac{
\mathrm{d} u}{v}\right)  n^{\prime}v G\left(  \mathrm{d} a_{n^{\prime}%
}v\right)
\]
and
\[
\int_{B_{h}}u^{2} F\left(  \frac{\mathrm{d} u}{v}\right)  n^{\prime} G\left(
\mathrm{d} a_{n^{\prime}}v\right)  \leq h\int_{B_{h}}\left\vert u\right\vert
F\left(  \frac{\mathrm{d} u}{v}\right)  n^{\prime} G\left(  \mathrm{d}
a_{n^{\prime}}v\right)  .
\]
Statement (\ref{zer1}) is a consequence of (\ref{La}) and a slight
modification of the argument giving (\ref{Lb}) yields
\[
\lim_{n^{\prime}\rightarrow\infty}\int_{B_{h}}\left\vert u\right\vert F\left(
\frac{\mathrm{d} u}{v}\right)  n^{\prime} G\left(  \mathrm{d} a_{n^{\prime}%
}v\right)  =\alpha E \left\vert X\right\vert +\int_{B_{h}}\left\vert
u\right\vert F\left(  \frac{\mathrm{d} u}{v}\right)  \Lambda\left(  \mathrm{d}
v\right)  \text{,}%
\]
from which (\ref{zer2}) follows.

The proof of the first three limit results now can be carried out the same way
as in the previous proposition.
\end{proof}

Now we are ready to prove Theorem \ref{2dim-conv}.\smallskip

\noindent
\textit{Proof of Theorem \ref{2dim-conv}.}
We have to check the three
conditions in Lemma \ref{CHT-Kallenberg} for the array%
\begin{equation}
\left\{  \left(  X_{i}Y_{i}/a_{n^{\prime}},Y_{i}/a_{n^{\prime}}\right)
\right\}  _{i=1}^{n^{\prime}}. \label{aa2}%
\end{equation}
First of all, assumption (\ref{Y-conv}) permits us to apply Lemma
\ref{CHT-Kallenberg} to the array
\begin{equation}
\left\{  Y_{i}/a_{n^{\prime}}\right\}  _{i=1}^{n^{\prime}}, \label{aa1}%
\end{equation}
to get that (e.i) and (e.ii) in the form (\ref{Gz}) and (\ref{Ga}) are
satisfied for (\ref{aa1}). Thus we can infer from Proposition
\ref{Levy-measure} that (e.i) holds as given in (\ref{L1}) and (\ref{L2}) for
(\ref{aa2}). Next we apply Proposition \ref{1st moments} to see that (e.ii)
holds for (\ref{aa2}) in the form (\ref{La}) and (\ref{Lb}). In particular,
notice that in Proposition \ref{1st moments} we can write
\[
\alpha+\int_{0}^{h}\phi\left(  v\right)  v\Lambda\left(  \mathrm{d}v\right)
=\alpha+\int_{B_{h}}v\Pi\left(  \mathrm{d}u,\mathrm{d}v\right)
\]
and
\[
\alpha EX+\int_{0}^{h}\psi\left(  v\right)  \Lambda\left(  \mathrm{d}v\right)
=\alpha EX+\int_{B_{h}}u\Pi\left(  \mathrm{d}u,\mathrm{d}v\right)  .
\]
Using that
\[
\mathbf{b}^{h}=\mathbf{b}-\int_{h<|(u,v)|\leq1}(u,v)\Pi(\mathrm{d}%
u,\mathrm{d}v),
\]
we get that $\mathbf{b}$ must have the form
\[
\mathbf{b}=\left(
\begin{array}
[c]{c}%
\alpha EX+\int_{0<u^{2}+v^{2}\leq1}u\Pi\left(  \mathrm{d}u,\mathrm{d}v\right)
\\
\alpha+\int_{0<u^{2}+v^{2}\leq1}v\Pi\left(  \mathrm{d}u,\mathrm{d}v\right)
\end{array}
\right)  .
\]

Finally, Proposition \ref{2nd-moments} shows that the covariance matrix $a$
has to be 0, so that (e.iii) holds for (\ref{aa2}) with $a=0$.
\hfill $\Box\medskip$

\noindent
\textit{Proof of Theorem \ref{density}.} The proof will be derived from 
results in Griffin \cite{Griffin}. Note that since both $X$ and $Y$
are independent and non-degenerate, the random vector $\left( XY,Y\right) $ is full, which in
this case means that its distribution is not concentrated on a line. Since $%
Y\in \mathcal{F}_{c}$ there exits a sequence of positive constants $a_{n}$
such that for every subsequence of $\left\{ n\right\} $ there is a further
subsequence $\left\{ n^{\prime }\right\} $ such that $W_{2,n^{\prime
}}/a_{n^{\prime }}$ converges in distribution to a non-degenerate random
variable. Set 
\begin{equation*}
B_{n}=\left( 
\begin{array}{cc}
\frac{1}{a_{n}} & 0 \\ 
0 & \frac{1}{a_{n}}%
\end{array}%
\right) \text{ }.
\end{equation*}%
Clearly, we can now apply Theorem \ref{2dim-conv} to conclude that for every subsequence of $\left\{ n\right\} $ there is a further
subsequence $\left\{ n^{\prime }\right\} $ such that
\begin{equation}
B_{n'}\left( 
\begin{array}{c}
W_{1,n'} \\ 
W_{2,n'}%
\end{array}%
\right) ,  \label{B}
\end{equation}%
converges in distribution along $\left\{ n^{\prime }\right\} $ to a random vector
\begin{equation}
\left( 
\begin{array}{c}
W_{1} \\ 
W_{2}%
\end{array}%
\right) ,  \label{U}
\end{equation}%
which is non-degenerate and full. \textquotedblleft Full\textquotedblright \ follows
by an examination of the structure of the characteristic function of $\left(
W_{1},W_{2}\right) $ given in (\ref{limit-chfunc}).  Thus we see that
condition (C) of Griffin \cite{Griffin} holds.  Next Theorem 4.5 of Griffin 
\cite{Griffin} says the conditions (A) and (C) of \cite{Griffin} are
equivalent. Now since condition (A) of \cite{Griffin} is satisfied, we can use the
proof of Griffin's Theorem 4.1 to show that there exist sequences of linear transformations 
$A_{n}: \mathbb{R}^2 \rightarrow \mathbb{R}^2$ and vectors $\delta _{n} \in \mathbb{R}^2$ such that 
\begin{equation*}
A_{n}\left\{ \left( 
\begin{array}{c}
W_{1,n} \\ 
W_{2,n}%
\end{array}%
\right) -\delta _{n}\right\}
\end{equation*}%
is stochastically compact and all of its subsequential distributional limit
random vectors, say, 
\begin{equation}
\left( 
\begin{array}{c}
W_{1}^{\prime } \\ 
W_{2}^{\prime }%
\end{array}%
\right)   \label{U'}
\end{equation}%
are non-degenerate and full. Moreover, Griffin proves that any such random vector (%
\ref{U'}) has a $C^{\infty }$ density. This fact combined with an argument
based on the convergence of types theorem implies that each subsequential
limit random vector (\ref{U}) has a $C^{\infty }$ density, say $f\left(
u,v\right) $. (See the convergence of types theorem given in Theorem 2.3.17
on page 35 in \cite{MS}.) Thus since every subsequential limit (\ref{B})
is full with density $f\left( u,v\right) $, the distributional limit $T$ of the
corresponding self-normalized sum (\ref{TT}) has density 
\begin{equation*}
f_{T}\left( t\right) =\int_{0}^{\infty }vf\left( tv,v\right) \mathrm{d}v.
\end{equation*}%
\hspace{1pt} \hfill $\Box \medskip $

\noindent
\textit{Proof of Proposition \ref{alpha0}.}
It can be inferred from classical theory (or from the proof
of Theorem \ref{2dim-conv}) that every subsequential
limit law $W$ of $a_{n}^{-1}\sum_{i=1}^{n}Y_{i}$ has the
$\mathrm{id}(\alpha,\Lambda)$ distribution with characteristic function
\begin{equation*}
Ee^{\mathrm{i}uW}= \exp\left\{  \mathrm{i}u \alpha+
\int_{0}^{\infty}\left( e^{\mathrm{i}ux} - 1 \right)  \Lambda(\mathrm{d}x)\right\},
\end{equation*}
where $\Lambda$ satisfies (\ref{ff}), and $\alpha \geq 0$.
Clearly $W \overset{\mathrm{D}}{=}\alpha+V$ and the L\'{e}vy process
associated with $W$ is $\alpha t+V_{t}$, $t\geq0$, where
\[
Ee^{\mathrm{i}uV_{t}}=\exp\left\{  t\int_{0}^{\infty}\left(  e^{\mathrm{i}%
uy}-1\right)  \Lambda\left(  \mathrm{d}y\right)  \right\}  .
\]
By an application of Corollary 1 of Maller and Mason \cite{MM2} this implies that
the process $\alpha t+V_{t}$, $t\geq0$, is both in the \textit{centered Feller
class} at zero and at infinity.
Using the notation of \cite{MM2} and \cite{MM3} we have
\[
\nu(x)=\gamma_{\alpha}+\int_{1}^{x}y\Lambda(\mathrm{d}y)=\alpha
+\int_{0}^{x}y\Lambda(\mathrm{d}y),
\]
where $\gamma_\alpha = \alpha+ \int_0^1 y\Lambda(\mathrm{d}y)$.
We get by Theorem 2.3 in
Maller and Mason \cite{MM3} (equation (2.11)) that for some $C>0$ for all $x>0$
small enough
\[
x \left( \alpha + \int_0^x y \Lambda(\mathrm{d} y) \right)
\leq C \int_0^x y^2 \Lambda( \mathrm{d} y),
\]
and thus
\[
\alpha + \int_0^x y \Lambda(\mathrm{d} y) \leq \frac{C}{x} \int_0^x y^2 \Lambda( \mathrm{d} y) 
\leq C \int_0^x y \Lambda (\mathrm{d} y),
\]
and the upper bound tends to 0, as $x \searrow 0$.
Since $\alpha\geq0,$ this can only happen if $\alpha=0$.
\hspace*{1pt} \hfill $\Box\medskip$

\noindent
\textit{Proof of Theorem \ref{converse}.} Choose any $0<\varepsilon
<1$, then on the set $A_{n}\left(  \varepsilon\right)  $ for any $k>1$, by the
conditional version of Chebyshev's inequality
\begin{equation} \label{cc}
\begin{split}
& P\left\{  \left\vert \frac{\sum_{i\not =m(n)}X_{i}Y_{i}}{\sum_{i=1}^{n}Y_{i}%
}\right\vert >\varepsilon\sqrt{k}E|X|\text{ }\bigg|A_{n}\left(  \varepsilon
\right)  \right\} \\
\leq \, & E\left(  \left\vert \frac{\sum_{i\not =m(n)}X_{i}Y_{i}}{\sum_{i=1}%
^{n}Y_{i}}\right\vert |A_{n}\left(  \varepsilon\right)  \right)  /\left(
\varepsilon\sqrt{k}E|X|\right)  \leq k^{-1/2}.
\end{split}
\end{equation}
Let $\varepsilon=1/k$ and set
\[
B_{k,n}=\left\{  \left\vert \frac{\sum_{i\not =m(n)}X_{i}Y_{i}}{\sum_{i=1}%
^{n}Y_{i}}\right\vert \leq k^{-1/2}E|X|\right\}  .
\]
We get by (\ref{cc}) that
\[
P\left\{  B_{k,n}|A_{n}\left(  k^{-1}\right)  \right\}  \geq1-k^{-1/2}\text{.}%
\]
On the set $A_{n}\left(  k^{-1}\right)  \cap B_{k,n}$ we have
\[
\Delta_{n}\leq\left\vert X_{m\left(  n\right)  }\right\vert k^{-1}%
+k^{-1/2}E|X|.
\]
Now for any $0<\eta<1$ there exists a $K_{\eta}>0$ such that $P\left\{
\vert X_{m (n)  } \vert \leq K_{\eta}\right\}
\geq1-\eta$. Observe that
\begin{align*}
& P \left\{ \Delta_{n}\leq  K_{\eta}  k^{-1}+k^{-1/2}E|X| \right\} \\
& \geq P \left\{ \Delta_{n}\leq \big \vert X_{m (n)} \big\vert
k^{-1}+k^{-1/2}E|X|,\big\vert X_{m (n) } \big\vert \leq
K_{\eta} \right\} \\
& \geq  P \left\{ \Delta_{n}\leq \big\vert X_{m (n) } \big\vert
k^{-1}+k^{-1/2}E|X| \right\}-P \left\{  \big\vert X_{m (n) } \big\vert
>K_{\eta} \right\}  ,
\end{align*}
which is
\[
\geq P\left\{  A_{n} \big(  k^{-1}\big)  \cap B_{k,n}\right\}
\mathbf{-}\eta=P\{A_{n} \big(  k^{-1} \big) \} P\left\{  B_{k,n}|A_{n}
\big( k^{-1} \big)  \right\}  - \eta\text{.}%
\]
Therefore we have with $\varepsilon_{k} (  \eta )  :=K_{\eta}%
k^{-1}+k^{-1/2}E|X|$,
\[
P\left\{  \Delta_{n}\leq\varepsilon_{k}\left(  \eta\right)  \right\}  \geq
P\{A_{n} \big(  k^{-1} \big)  \}\left(  1-k^{-1/2}\right)  \mathbf{-}%
\eta\mathbf{.}%
\]
Notice that for each fixed $\eta>0$ and $\delta^{\prime}<\delta$ for all large
enough $k$ and large enough $n^{\prime}$ along the subsequence $\{n^{\prime
}\}$ as in (\ref{dd})
\[
P \left\{A_{n^{\prime}}\big(  k^{-1}\big)  \right\}\left(  1-k^{-1/2}\right)
\mathbf{-}\eta\geq\delta^{\prime}-\eta.
\]
Clearly we can choose $\delta^{\prime}<\delta$ sufficiently close to $\delta$
and $\eta>0$ small enough so that $\delta^{\prime}-\eta$ is as close to
$\delta$ as desired: Since for each fixed $\eta>0$, $\varepsilon_{k}\left(
\eta\right)  \rightarrow0$, as $k\rightarrow\infty$, we see that statement
(\ref{BigD}) holds along the subsequence $\{n^{\prime}\}$ as in (\ref{dd}).
\hfill $\Box\medskip$

\noindent
\textit{Proof of Theorem \ref{exp}.}
First we introduce some
notation. Set for any $C>0$ and random variable $Z$, $Z^{C}=Z I\left\{
\left\vert Z\right\vert \leq C\right\}  $ and $\overline{Z}^{C}=Z-Z^{C}$.
Define the random variables for $n\geq1$%
\[
S_{n}=\frac{\sum_{i=1}^{n}\left(  X_{i}-EX\right)  Y_{i}}{\sum_{i=1}^{n}Y_{i}%
},\text{ }S_{n}^{C}=\frac{\sum_{i=1}^{n}\left(  X_{i}^{C}-EX^{C}\right)
Y_{i}}{\sum_{i=1}^{n}Y_{i}}, \ \overline{S}_{n}^{C}=S_{n}-S_{n}^{C},
\]%
\[
\text{ }N_{n}^{C}=\frac{\sum_{i=1}^{n}\left(  X_{i}^{C}-EX^{C}\right)  Y_{i}%
}{\sqrt{\sum_{i=1}^{n}Y_{i}^{2}}}\text{ and }R_{n}=\frac{\sqrt{\sum_{i=1}%
^{n}Y_{i}^{2}}}{\sum_{i=1}^{n}Y_{i}}.
\]
As we noted before by the results of Griffin \cite{Grif} our assumption that
(\ref{grif}) does not hold is equivalent to
\begin{equation}
R_{n}^{-1}\neq O_{P}\left(  1\right)  , \label{R}%
\end{equation}
so there exist a $\delta>0$ and a subsequence $\left\{  n_{k}\right\}  $ of
$\left\{  n\right\}  $ such that $n_{k}\rightarrow\infty$ and
\begin{equation}
\lim_{\eta\searrow0}\liminf_{k\rightarrow\infty}P\left\{  R_{n_{k}}\leq
\eta\right\}  =\delta. \label{delta}%
\end{equation}
Now for any $\eta>0$, $C>0$ and $K>0$%
\begin{align*}
& P\left\{  \left\vert S_{n_{k}}\right\vert \leq\eta K\sqrt{Var\left(
X^{C}\right)  }+KE\left\vert \overline{X}^{C}\right\vert \right\} \\
& \geq P\left\{  \left\vert S_{n_{k}}^{C}\right\vert \leq\eta K\sqrt{Var\left(
X^{C}\right)  },\left\vert \overline{S}_{n_{k}}^{C}\right\vert \leq
KE\left\vert \overline{X}^{C}\right\vert \right\} \\
& \geq P\left\{  \left\vert S_{n_{k}}^{C}\right\vert \leq\eta K\sqrt{Var\left(
X^{C}\right)  }\right\}  -P\left\{  \left\vert \overline{S}_{n_{k}}%
^{C}\right\vert >KE\left\vert \overline{X}^{C}\right\vert \right\}  .
\end{align*}
Note that by Markov's inequality
\begin{equation}
P\left\{  \left\vert \overline{S}_{n_{k}}^{C}\right\vert >KE\left\vert
\overline{X}^{C}\right\vert \right\}  \leq E\left\vert \overline{X}%
^{C}-E\overline{X}^{C}\right\vert /\left(  KE\left\vert \overline{X}%
^{C}\right\vert \right)  \leq2/K. \label{e1}%
\end{equation}
Write $S_{n_{k}}^{C}=N_{n_{k}}^{C}R_{n_{k}}.$ Now
\[
P\left\{  \left\vert S_{n_{k}}^{C}\right\vert \leq\eta K\sqrt{Var\left(
X^{C}\right)  }\right\}  \geq P\left\{  R_{n_{k}}\leq\eta,\left\vert N_{n_{k}%
}^{C}\right\vert \leq K\sqrt{Var\left(  X^{C}\right)  }\right\}
\]%
\[
\geq P\left\{  R_{n_{k}}\leq\eta\right\}  -P\left\{  \left\vert N_{n_{k}}%
^{C}\right\vert >K\sqrt{Var\left(  X^{C}\right)  }\right\}  ,
\]
which by Chebyshev's inequality is%
\[
\geq P\left\{  R_{n_{k}}\leq\eta\right\}  -1/K^{2}.
\]
Thus for each $\eta>0$, $C>0$ and $K>0$%
\[
P\left\{  \left\vert S_{n_{k}}\right\vert \leq\eta K\sqrt{Var\left(
X^{C}\right)  }+KE\left\vert \overline{X}^{C}\right\vert \right\}
\geq P\left\{  R_{n_{k}}\leq\eta\right\}  -1/K^{2}-2/K.
\]
Next note that for large enough $K$
\[
1/K^{2}+2/K<\delta/4.
\]
Also for any $\varepsilon>0$, for all large enough $C>0$%
\[
KE\left\vert \overline{X}^{C}\right\vert \leq\varepsilon/2
\]
and given $C>0$ and $K>0$ for a small enough $\eta>0$,%
\[
\eta K\sqrt{Var\left(  X^{C}\right)  }\leq\varepsilon/2.
\]
This gives
\begin{equation*}
\begin{split}
P\left\{  \left\vert S_{n_{k}}\right\vert \leq\varepsilon\right\}
&  \geq P \left\{  \left\vert S_{n_{k}}\right\vert \leq\eta K\sqrt{Var\left(
X^{C}\right)  }+KE\left\vert \overline{X}^{C}\right\vert \right\} \\
& \geq P\left\{  R_{n_{k}}\leq\eta\right\}  -\delta/4.
\end{split}
\end{equation*}
Thus by (\ref{delta}) for all large enough $k$%
\[
P\left\{  \left\vert S_{n_{k}}\right\vert \leq\varepsilon\right\}  \geq
\delta/4,
\]
which since $\varepsilon>0$ is independent of $\delta$, implies that
\begin{equation}
\lim_{\varepsilon\searrow0}\liminf_{k\rightarrow\infty}P\left\{  \left\vert
S_{n_{k}}\right\vert \leq\varepsilon\right\}  \geq\delta/4. \label{delta1}%
\end{equation}
To complete the proof, notice that
\[
S_{n_{k}}=O_{P}\left(  1\right)  ,
\]
which implies by tightness that there exists a subsequence $\left\{
n^{\prime}\right\}  $ of $\left\{  n_{k}\right\}  $ and a random variable $S$
\[
S_{ n^{\prime}}\overset{\mathrm{D}}{\longrightarrow}S\text{, }%
\]
which by (\ref{delta1}) satisfies $P\left\{  S =0\right\}  \geq\delta/4.$
\hfill $\Box\medskip$

We are now ready to prove Theorem \ref{continuous}. \smallskip

\noindent
\textit{Proof of Theorem \ref{continuous}. }
Theorem \ref{density}
implies that if $Y\in\mathcal{F}_{c}$ then every subsequential law of $T_{n}$
has a Lebesgue density.\smallskip

Now suppose that $Y\notin\mathcal{F}_{c}$. Applying a characterization of
Maller \cite{M} we know that $Y$ is in the centered Feller class if and only
if
\begin{equation*}
\limsup_{x\rightarrow\infty}\frac{x^{2}P\{Y>x\}+xE\left(  YI(Y\leq x)\right)
}{EY^{2}I(Y\leq x)}<\infty. %\label{M}%
\end{equation*}
Thus if $Y\notin\mathcal{F}_{c}$%
\begin{equation*}
\limsup_{x\rightarrow\infty}\frac{x^{2}P\{Y>x\}}{EY^{2}I(Y\leq x)}%
=\infty\text{ or }\limsup_{x\rightarrow\infty}\frac{xE\left(  YI(Y\leq
x)\right)  }{EY^{2}I(Y\leq x)}=\infty. %\label{m1}%
\end{equation*}
Note that if $Y\notin\mathcal{F}_{c}$ and (\ref{grif}) does not hold we can
apply Theorem \ref{exp} to show that for some subsequence $\left\{  n^{\prime
}\right\}  $, $T_{n^{\prime}}\overset{\mathrm{D}}{\longrightarrow}T,$ where
$P\left\{  T=EX\right\}  >0.$ Next, if $Y\notin\mathcal{F}_{c}$ but
(\ref{grif}) is satisfied then (\ref{not-feller-class}) must hold too. Thus by
the fact that (\ref{not-feller-class}) and (\ref{grif}) imply that (\ref{dd})
holds, we can apply Corollary \ref{converse1} to find an $X$ and $x_{0}$ so
that along a subsequence $\left\{  n^{\prime}\right\}  $, $T_{n^{\prime}%
}\overset{\mathrm{D}}{\longrightarrow}T,$ where $P\left\{  T=X=x_{0}\right\}>0$.
\hfill $\Box$

\section{Appendix}

\begin{proposition}
\label{Feller-class} Let $X$ and $Y$ non-degenerate independent random
variables. If $X$ and $Y$ are in the Feller class, then so is $XY$.
\end{proposition}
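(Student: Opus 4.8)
The plan is to avoid the subsequential-limit definition and work with the analytic characterization of the Feller class. Recall that a random variable $Z$ belongs to $\mathcal{F}$ if and only if
\[
\limsup_{x\rightarrow\infty}\frac{x^{2}P\{|Z|>x\}}{E[Z^{2}I\{|Z|\leq x\}]}<\infty,
\]
a condition depending only on the law of $|Z|$ (compare (\ref{not-feller-class})). Consequently $Z\in\mathcal{F}$ if and only if $|Z|\in\mathcal{F}$, so, replacing $X$ and $Y$ by $|X|$ and $|Y|$, I may assume throughout that $X\geq0$ and $Y\geq0$; then $XY\geq0$. For a non-negative $Z$ write $Q_{Z}(x)=P\{Z>x\}$ and $M_{Z}(x)=E[Z^{2}I\{Z\leq x\}]$; in this notation $XY\in\mathcal{F}$ is equivalent to the single bound $x^{2}Q_{XY}(x)\leq C\,M_{XY}(x)$ for all large $x$, and this is what I would establish.

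The next step is to condition on $Y$. By independence,
\[
M_{XY}(x)=\int_{(0,\infty)}y^{2}M_{X}(x/y)\,P_{Y}(\mathrm{d}y)\quad\text{and}\quad x^{2}Q_{XY}(x)=x^{2}\int_{(0,\infty)}Q_{X}(x/y)\,P_{Y}(\mathrm{d}y).
\]
I would fix a threshold $t_{0}\geq1$ large enough that the Feller bounds $t^{2}Q_{X}(t)\leq C_{0}M_{X}(t)$ and $t^{2}Q_{Y}(t)\leq C_{0}M_{Y}(t)$ hold for all $t\geq t_{0}$, and simultaneously large enough that $m_{0}:=M_{X}(t_{0})=E[X^{2}I\{X\leq t_{0}\}]>0$; such a $t_{0}$ exists because $X$ is non-degenerate and non-negative. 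The argument then splits the $Y$-integral at $y=x/t_{0}$.

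On the region $\{y\leq x/t_{0}\}$ one has $x/y\geq t_{0}$, so the Feller bound for $X$ applies: $x^{2}Q_{X}(x/y)=y^{2}(x/y)^{2}Q_{X}(x/y)\leq C_{0}\,y^{2}M_{X}(x/y)$, and integrating gives a contribution at most $C_{0}M_{XY}(x)$. This part is routine. The main obstacle is the complementary region $\{y>x/t_{0}\}$, where $x/y<t_{0}$ and the Feller inequality for $X$ is useless: $M_{X}(x/y)$ can be far smaller than $(x/y)^{2}Q_{X}(x/y)$, and for a law with no mass near the origin the ratio $E[X^{2}\wedge t^{2}]/M_{X}(t)$ even blows up as $t\searrow0$. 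The way around this is to stop using $X$ and instead exploit the Feller property of $Y$: bounding $Q_{X}(x/y)\leq1$ gives a contribution at most $x^{2}P\{Y>x/t_{0}\}=t_{0}^{2}(x/t_{0})^{2}Q_{Y}(x/t_{0})\leq t_{0}^{2}C_{0}M_{Y}(x/t_{0})$ for $x\geq t_{0}^{2}$.

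Finally I would reconnect this with $M_{XY}(x)$ through the elementary lower bound
\[
M_{XY}(x)\geq\int_{(0,x/t_{0}]}y^{2}M_{X}(x/y)\,P_{Y}(\mathrm{d}y)\geq M_{X}(t_{0})\int_{(0,x/t_{0}]}y^{2}\,P_{Y}(\mathrm{d}y)=m_{0}\,M_{Y}(x/t_{0}),
\]
where $M_{X}(x/y)\geq M_{X}(t_{0})=m_{0}$ since $x/y\geq t_{0}$ on this range. Hence the second region contributes at most $t_{0}^{2}C_{0}m_{0}^{-1}M_{XY}(x)$, and adding the two bounds yields $x^{2}Q_{XY}(x)\leq(C_{0}+t_{0}^{2}C_{0}m_{0}^{-1})M_{XY}(x)$ for all large $x$, which is precisely the Feller condition for $XY$. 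The only genuinely delicate point is choosing $t_{0}$ so that all three requirements hold at once; everything else is bookkeeping.
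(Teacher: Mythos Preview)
Your proof is correct and follows essentially the same route as the paper's: both reduce to the analytic Feller criterion for $|X|$, $|Y|$, split the conditioning integral at a fixed threshold, use one variable's Feller bound on the main region, and handle the leftover tail via the other variable's Feller bound together with the elementary lower bound $M_{XY}(x)\geq M_{X}(t_{0})\,M_{Y}(x/t_{0})$. The only cosmetic difference is that the paper integrates over the law of $X$ first (so the roles of $X$ and $Y$ are swapped relative to your write-up), but the structure and all key inequalities are the same.
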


\begin{proof}
Let denote $F$ and $G$ the distribution functions of
$\left\vert X\right\vert $ and $\left\vert Y\right\vert $ respectively. Since
$Y\in\mathcal{F}$
\begin{equation}
\limsup_{x\rightarrow\infty}\frac{x^{2}P\{\left\vert Y\right\vert >x\}}%
{EY^{2}I(\left\vert Y\right\vert \leq x)}<\infty, \label{feller-class}%
\end{equation}
which means that there is a $K>0$ and $x_{0}>0$, such that for all $x\geq
x_{0}$
\[
\frac{x^{2}P\{\left\vert Y\right\vert >x\}}{EY^{2}I(\left\vert Y\right\vert
\leq x)}<K.
\]

We show that (\ref{feller-class}) holds for $XY$. We have that
\begin{align*}
EX^{2}Y^{2}I(\left\vert XY\right\vert \leq t)  &  =\int\hspace{-5pt}%
\int_{xy\leq t}x^{2}y^{2}F(\mathrm{d}x)G(\mathrm{d}y)\\
&  =\int_{0}^{\infty}x^{2}F(\mathrm{d}x)\int_{0}^{t/x}y^{2}G(\mathrm{d}y)\\
&  \geq\int_{0}^{t/x_{0}}x^{2}F(\mathrm{d}x)\int_{0}^{t/x}y^{2}G(\mathrm{d}y).
\end{align*}
Since $x\leq t/x_{0}$, $t/x\geq x_{0}$, so we can use the estimate above to
obtain
\begin{align*}
&  \geq\int_{0}^{t/x_{0}}x^{2}\frac{1}{K}\frac{t^{2}}{x^{2}}P\{\left\vert
Y\right\vert >t/x\}F(\mathrm{d}x)\\
&  =\frac{t^{2}}{K}\int_{0}^{t/x_{0}}P\{\left\vert Y\right\vert
>t/x\}F(\mathrm{d}x)\\
&  =\frac{t^{2}}{K}P\{\left\vert XY\right\vert >t,\,\left\vert X\right\vert
\leq t/x_{0}\}.
\end{align*}
Now, using that
\begin{align*}
P\{\left\vert XY\right\vert >t,\,\left\vert X\right\vert \leq t/x_{0}\}  &
=P\{\left\vert XY\right\vert >t\}-P\{\left\vert XY\right\vert >t,\,\left\vert
X\right\vert >t/x_{0}\}\\
&  \geq P\{\left\vert XY\right\vert >t\}-P\{\left\vert X\right\vert
>t/x_{0}\},
\end{align*}
we obtain
\[
EX^{2}Y^{2}I(\left\vert XY\right\vert \leq t)\geq\frac{t^{2}}{K}\left(
P\{\left\vert XY\right\vert >t\}-P\{\left\vert X\right\vert >t/x_{0}\}\right)
,
\]
i.e.
\[
\frac{t^{2}P\{\left\vert XY\right\vert >t\}}{EX^{2}Y^{2}I(\left\vert
XY\right\vert \leq t)}\leq K+\frac{t^{2}P\{\left\vert X\right\vert >t/x_{0}%
\}}{EX^{2}Y^{2}I(\left\vert XY\right\vert \leq t)},
\]
so we only have to show that the $\limsup$ of the last term is finite. In
order to do this notice that
\[
EX^{2}Y^{2}I(\left\vert XY\right\vert \leq t)\geq EX^{2}I(\left\vert
X\right\vert \leq t/x_{0})EY^{2}I(\left\vert Y\right\vert \leq x_{0}).
\]
From this we have
\[
\frac{t^{2}P\{\left\vert X\right\vert >t/x_{0}\}}{EX^{2}Y^{2}I(\left\vert
XY\right\vert \leq t)}\leq\frac{x_{0}^{2}}{EY^{2}I(\left\vert Y\right\vert
\leq x_{0})}\frac{(t/x_{0})^{2}P\{\left\vert X\right\vert >t/x_{0}\}}%
{EX^{2}I(\left\vert X\right\vert \leq t/x_{0})},
\]
and the finiteness of the $\limsup$ of the last factor is exactly the
condition $X\in\mathcal{F}$. The proof is finished.
\end{proof}

%%%%%%%%%%%%%%%%%%%%%%%%%%%%%%%%%%%%%%%%%%%%%%%%%%%%%%%%%%%%%%%%%%%
%%                                                               %%
%% You may add acknowledgments (optional).                       %%
%%                                                               %%
%%%%%%%%%%%%%%%%%%%%%%%%%%%%%%%%%%%%%%%%%%%%%%%%%%%%%%%%%%%%%%%%%%%

%\ACKNO{We thank Martin Hairer who provided a nice \texttt{MR} macro.}

%%%%%%%%%%%%%%%%%%%%%%%%%%%%%%%%%%%%%%%%%%%%%%%%%%%%%%%%%%%%%%%%%%%
%%                                                               %%
%% You have reached the end of your document.                    %%
%%                                                               %%
%%%%%%%%%%%%%%%%%%%%%%%%%%%%%%%%%%%%%%%%%%%%%%%%%%%%%%%%%%%%%%%%%%%


\begin{thebibliography}{99}




\bibitem{Brei}
Breiman, L.:
On some limit theorems similar to the arc-sin law.
\textit{Teor. Verojatnost. i Primenen.} \textbf{10}, (1965), 351--360.
%\MR{0184274}



\bibitem{Breiman-Prob}
Breiman, L.:
\textit{Probability.} Addison-Wesley, Reading, MA, 1968.
%\MR{0229267}


\bibitem{Cline}
Cline, D. B. H.:
Convolution tails, product tails and domains of attraction.
\textit{Probab. Th. Rel. Fields} \textbf{72}, (1986), 529--557.
%\MR{0847385}


\bibitem {Dar}
Darling, D. A.:
The influence of the maximum term in the addition of independent random variables.
\textit{Trans. Amer. Math. Soc.} \textbf{73},  (1952), 95--107.
% \MR{0048726}


\bibitem{dehaan}
de Haan, L.:
\textit{On regular variation and its application to the weak convergence of sample extremes}. Mathematical Centre Tracts, 32 Mathematisch Centrum, Amsterdam 1970, v+124 pp.
% \MR{0286156}

\bibitem{Den}
Denisov, D. and Zwart, B.:
On a theorem of Breiman and a class of random difference equations.
\textit{J. Appl. Probab.} \textbf{44}, (2007), 1031--1046.
% \MR{2382943}


\bibitem{GGM}
Gin\'e, E., G\"otze, F. and Mason D. M.:
When is the student $t$-statistic asymptotically standard normal?
\textit{Annals of Prob.} \textbf{25},  (1997), 1514--1531.
% \MR{1457629}


\bibitem{Griffin}
Griffin, P. S.:
Matrix Normalized Sums of Independent Identically Distributed Random Vectors.
\textit{Annals of Prob.} \textbf{14}, (1986), 224--246.
% \MR{0815967}


\bibitem {Grif}
Griffin, P. S.:
Tightness of the Student $t$--statistic.
\textit{Electron. Comm. Probab.} \textbf{7}, (2002), 171--180.
% \MR{1937903}



\bibitem{Jessen}
Jessen, A.~H. and Mikosch, T.:
Regularly varying functions.
\textit{Publ. Inst. Math. (Beograd) (N.S.)} \textbf{80(94)}, (2006), 171--192.
% \MR{2281913}


\bibitem{Kallenberg}
Kallenberg, O.:
\textit{Foundations of Modern Probability.}
Springer, 2002.
% \MR{1876169}


\bibitem{M}
Maller, R. A.:
Relative stability, characteristic functions and stochastic compactness.
\textit{J. Austral. Math. Soc. Ser. A} \textbf{28},  (1979), 499--509.
% \MR{0562881}


\bibitem{MM}
Maller, R. A. and Mason, D. M.:
Convergence in distribution of L\'{e}vy processes at small times with self-normalization.
\textit{Acta. Sci. Math.} (Szeged). \textbf{74}, (2008), 315--347.
% \MR{2431109}


\bibitem{MM2}
Maller, R. and Mason, D. M.: Stochastic compactness of L\'{e}vy
processes, in: C. Houdr\'{e}, V. Kolthchinskii, M. Peligrad, D.M. Mason
(Eds.), Proceedings of High Dimensional Probability V, Luminy, France, 2008,
I.M.S. Collections, High Dimensional Probability V: The Luminy Volume, Vol.
5 , Beachwood, Ohio, USA: Institute of Mathematical Statistics, 2009, pp.
239--257.
% \MR{2797951}


\bibitem{MM3} Maller, R. and Mason, D. M.: Small-time compactness and convergence
behavior of deterministically and self-normalised L\'{e}vy processes.
\textit{Trans. Amer. Math. Soc.} 362, (2010), 2205--2248.
% \MR{2574893}


\bibitem{Mason05}
Mason, D. M.:
The Asymptotic Distribution of Self-Normalized Triangular Arrays.
\textit{Journal of Theoretical Probability} \textbf{18},  (2005), 853--870.
% \MR{2289935}


\bibitem{MN}
Mason, D. M. and Newton, M. A.:
A Rank Statistics Approach to the Consistency of a General Bootstrap.
\textit{Ann. Statist.} \textbf{20}, (1992), 1611--1624.
% \MR{1186268}


\bibitem{MZ}
Mason, D. M. and Zinn, J.:
When does a randomly weighted self-normalized sum converge in distribution?
\textit{Electron. Comm. Probab.} \textbf{10},  (2005), 70--81.
% \MR{2133894}


\bibitem{MS}
Meerschaert, M. M. and Scheffler, H. P.:
\textit{Limit distributions for sums of independent random vectors. Heavy tails in
theory and practice.}
Wiley Series in Probability and Statistics: Probability and
Statistics. John Wiley \& Sons, Inc., New York, 2001.
% \MR{1840531}

\end{thebibliography}
\end{document}